\definecolor{citation}{rgb}{0.2,0.5,0.2}
\definecolor{formula}{rgb}{0.1,0.2,0.5}
\definecolor{url}{rgb}{0,0.2,0.7}
\definecolor{citation}{rgb}{0.2,0.5,0.2}
\definecolor{formula}{rgb}{0.1,0.2,0.5}
\definecolor{url}{rgb}{0,0.2,0.7}
\def\ds{\displaystyle}
\def\supp{\mathrm{supp}}
\def\a{{\mathcal{A}}}
\def\l{{\mathcal{L}}}
\newcommand{\R}{\mathbb{R}}
\newcommand{\N}{\mathbb{N}}
\newcommand{\D}{\mathrm{d}}
\newcommand{\e}{\mathcal{E}}
\newcommand{\eps}{\varepsilon}
\renewcommand{\tilde}{\widetilde}
\renewcommand{\hat}{\widehat}
\renewcommand{\epsilon}{\varepsilon}
\newcommand{\oplb}[2]{\l_{_{#2}}[{#1}]}
\newcommand{\p}[1]{\mathcal{P}_{{#1}}}
\newcommand{\SE}{\setcounter{equation}{0} \section}
\newcommand{\baa}{\begin{array}}
\newcommand{\eaa}{\end{array}}
\newcommand{\ba}{\begin{eqnarray}}
\newcommand{\ea}{\end{eqnarray}}
\newcommand{\be}{\begin{equation}}
\newcommand{\ee}{\end{equation}}
\newtheorem{theorem}{Theorem}[section]
\newtheorem{lemma}[theorem]{Lemma}
\newtheorem{prop}[theorem]{Proposition}
\theoremstyle{definition}
\newtheorem{definition}[theorem]{Definition}
\theoremstyle{remark}
\newtheorem{remark}[theorem]{Remark}
\theoremstyle{remark}
\renewcommand{\le}{\leqslant}
\renewcommand{\leq}{\leqslant}
\renewcommand{\ge}{\geqslant}
\renewcommand{\geq}{\geqslant}
\newlength{\defbaselineskip}
\begin{document}

\title[]{Liouville type results for a nonlocal obstacle problem}$\,$\thanks{This work has been carried out in the framework of Archim\`ede Labex (ANR-11-LABX-0033) and of the A*MIDEX project (ANR-11-IDEX-0001-02), funded by the ``Investissements d'Avenir" French Government program managed by the French National Research Agency (ANR). The research leading to these results has also received funding from the
European Research Council under the European Union's Seventh Framework Programme (FP/2007-2013) ERC Grant Agreement n.~321186~- ReaDi~- Reaction-Diffusion Equations, Propa\-gation and Modelling,
and from the ANR DEFI project NONLOCAL (ANR-14-CE25-0013) and the ANR JCJC project MODEVOL (ANR-13-JS01-0009).
Additional support came from the Australian Research Council under
the Discovery Project NEW (Nonlocal Equations at Work) DP-170104880.}

\author[Julien Brasseur]{Julien Brasseur}
\address{BioSP, INRA, 84914, Avignon, France, and Aix Marseille Univ, CNRS, Centrale Marseille, I2M, Marseille, France}
\author[J\'er\^{o}me Coville]{J\'er\^ome Coville}
\address{BioSP, INRA, 84914, Avignon, France}
\author[Fran\c{c}ois Hamel]{Fran{\c{c}}ois Hamel}
\address{Aix Marseille Univ, CNRS, Centrale Marseille, I2M, Marseille, France}
\author[Enrico Valdinoci]{Enrico Valdinoci}
\address{School of Mathematics and Statistics, University of Melbourne, 813 Swanston Street, Parkville VIC 3010, Australia, and Dipartimento di Matematica, Universit\`a degli Studi di Milano, Via Saldini 50, 20133 Milan, Italy}

\begin{abstract}
This paper is concerned with qualitative properties of solutions to nonlocal reaction-diffusion equations of the form
$$ \int_{\mathbb{R}^N\setminus K} J(x-y)\,\big( u(y)-u(x) \big)\,\D y+f(u(x))=0, \quad x\in\R^N\setminus K,$$
set in a perforated open set $\mathbb{R}^N\setminus K$, where $K\subset\mathbb{R}^N$ is a bounded compact ``obstacle" and $f$ is a bistable nonlinearity. When $K$ is convex, we prove some Liouville-type results for solutions satisfying some asymptotic limiting conditions at infinity. We also establish a robustness result, assuming slightly relaxed conditions on $K$.
\end{abstract}

\maketitle

\tableofcontents


\SE{Introduction}

A classical topic in applied analysis consists
in the study of diffusive processes
in media with an obstacle: roughly speaking,
a dispersal follows a Brownian motion in an environment
that possess an inaccessible region. At the level
of partial differential equations, this translates into
a reaction/diffusion equation that is defined
outside a set~$K$,
which acts as an 
impenetrable
obstacle and along which Neumann conditions are prescribed.

One of the cornerstones in the study of these processes
lies in suitable rigidity results of Liouville-type,
which allow the classification of stationary solutions,
at least under some geometric assumption on the obstacle~$K$.

In this paper, we will study a nonlocal version
of a diffusion equation and provide a series of
Liouville-type results (whose
precise statements will be given
in Section~\ref{MAIN RES}). Not only
the results obtained have a theoretical interest in the
development of the theory of nonlocal equations, but they
also possess several potential applications (especially
in mathematical biology,
where the dispersal of biological populations
often presents nonlocal features, see e.g.
formula~(1) in~\cite{COSNER}, or in \cite{BR}).

Concretely, we will suppose that the diffusion operator
arises by convolution with an integrable kernel
and we will show that solutions of bistable
stationary equations with fixed behavior at infinity are necessarily
constant, at least when the obstacle is convex
or ``close to being convex'' (we also observe
that similar rigidity results
do not hold in general for nonconvex obstacles).

Interestingly, in the nonlocal case, the boundary conditions
along the obstacle do not need to be prescribed a priori
(differently from the classical case).

In addition, the nonlocal operator that we consider here
is not ``regularizing'', so some care is needed in our case
to deal with a possible
lack of regularity of the solutions.

We now provide the detailed mathematical description of the problem that we take into account.


\subsection{A nonlocal obstacle problem}

Throughout this paper, $K$ denotes a compact set of $\R^N$ with $N\ge2$, and $\left|\cdot\right|$ denotes the Euclidean norm in $\R^N$. We are interested in qualitative properties of bounded solutions to the following nonlocal semilinear equation
\begin{align}
Lu+f(u)=0 \quad{\mbox{ in }}\R^N\setminus K, \label{EQt}
\end{align}
where $L$ is the nonlocal diffusion operator given by
\begin{equation}\label{DEF:L}
Lu(x):=\int_{\R^N\setminus K} J(x-y)\,\big( u(y)-u(x) \big)\,\D y.
\end{equation}
The kernel $J\in L^1(\R^N)$ is a radially symmetric non-negative function with unit mass and $f$ is a $C^1$ ``bistable" nonlinearity (precise assumptions on $f$ and $J$ will be given later on).

This problem may be thought of (see the next page for more explanations) as a nonlocal version of the following problem
\be\label{eqlocale}\left\{\begin{array}{rl}
\Delta u+f(u)=0 & \text{in }\R^N\setminus K,\vspace{3pt}\\
\nabla u\cdot\nu=0 & \text{on }\partial K,
\end{array}\right.
\ee
where $\nu$ is the outward unit vector normal to $K$, assuming for~\eqref{eqlocale} that $K$ is smooth enough. For problem~\eqref{eqlocale} with the local diffusion operator $\Delta u$, it was shown in~\cite{BHM} that there
exist a time-global classical solution $u(t,x)$ to the parabolic problem
\be\label{EQ:BHM}\left\{\baa{rcll}
\ds\frac{\partial u}{\partial t} & \!\!=\!\! & \Delta u+f(u) & \text{in }\R\times\overline{\R^N\setminus K},\vspace{3pt}\\
\nabla u\cdot\nu & \!\!=\!\! & 0 & \text{on }\R\times\partial K\eaa\right.
\ee
satisfying $0<u(t,x)<1$ for all $(t,x)\in\R\times\overline{\R^N\setminus K}$,
and a classical solution $u_\infty(x)$ to the elliptic problem
\begin{align}
\left\{
\begin{array}{rl}
\Delta u_\infty+f(u_\infty)=0 & \text{in }\overline{\R^N\setminus K},\vspace{3pt}\\
\nabla u_\infty\cdot\nu=0 & \text{on }\partial K, \vspace{3pt}\\
0\le u_\infty\leq 1 & \text{in }\overline{\R^N\setminus K}, \vspace{3pt}\\
u_\infty(x)\to1 & \text{as }|x|\to+\infty.
\end{array} \label{EQ:BHM0}
\right.
\end{align}
The function $u_{\infty}$ is a stationary solution of~\eqref{EQ:BHM} and it is actually obtained as the large time limit of $u(t,x)$, in the sense that $u(t,x)\to u_\infty(x)$ as $t\to+\infty$ locally uniformly in $x\in\overline{\R^N\setminus K}$. Under some geometric conditions on $K$ (e.g. if $K$ is starshaped or directionally convex, see~\cite{BHM} for precise assumptions) it is shown in~\cite[Theorems~6.1 and~6.4]{BHM} that solutions to~\eqref{EQ:BHM0} are actually identically equal to~$1$ in the whole set $\overline{\R^N\setminus K}$. This Liouville property shows that the solutions $u(t,x)$ of~\eqref{EQ:BHM} constructed in~\cite{BHM} then satisfy
\begin{align}
u(t,x)\underset{t\to+\infty}{\longrightarrow} 1 \qquad{\mbox{ locally uniformly in }}x\in\overline{\R^N\setminus K}. \label{longtime}
\end{align}

To some extent, this result can be given an ecological interpretation. Consider a population with trajectories describing a Brownian motion in an environment consisting of the whole space~$\R^N$ with a compact obstacle $K$, and suppose that $f$ represents the demographic rate of the population. Then, the solution $u(t,x)$ to~\eqref{EQ:BHM} can be understood as the density of the population at time $t$ and location $x$. In this context,~\eqref{longtime} means that, at large time, the population tends to occupy the whole space.

Assuming now that the trajectories follow, say, a compound Poisson process, then the diffusion phenomena are better described by a convolution-type operator such as~\eqref{DEF:L}. The reaction-diffusion equation $\frac{\partial u}{\partial t}=\Delta u+f(u)$ is then replaced by the equation
$$\frac{\partial u}{\partial t}=Lu+f(u)$$
with the nonlocal dispersion operator $L$, see \cite{Fife,Hutson}. In this paper, we deal with qualitative properties of the stationary
solutions of equation~\eqref{EQt}, together with some asymptotic limiting conditions at infinity similar to those appearing in~\eqref{EQ:BHM0}. Namely, we will be mainly concerned with  solutions of
\begin{align}
\left\{
\begin{array}{rl}
Lu+f(u)=0 & \text{in }\R^N\setminus K,\vspace{3pt}\\
0\leq u\leq 1 & \text{in }\R^N\setminus K, \vspace{3pt}\\
u(x)\to1 & \text{as }|x|\to+\infty.
\end{array} \label{EQ:BHM1}
\right.
\end{align}

It is expected that~\eqref{EQ:BHM0} and~\eqref{EQ:BHM1} share some common properties. One of the goals of the present paper is, as for~\eqref{EQ:BHM0}, to find some geometric conditions on $K$ which guarantee that the solutions $u$ to~\eqref{EQ:BHM1} are identically equal to $1$. Moreover, as in~\cite{Bouhours} for~\eqref{EQ:BHM0}, we will also show the robustness of the Liouville type results for~\eqref{EQ:BHM1}.

We notice however that, whereas the solutions of~\eqref{EQ:BHM0} are automatically classical $C^2$ solutions in~$\overline{\R^N\setminus K}$ if the boundary~$\partial K$ is smooth enough (by standard interior and boundary elliptic estimates),
there is in general no smoothing effect for the nonlocal problems~\eqref{EQt} or~\eqref{EQ:BHM1}. The solutions $u$ may even not be continuous in general. Yet some regularity results (uniform or H\"older continuity) will be shown here under additional assumptions on the data $J$ and $f$. Actually, one of the difficulties and novelties of this paper, as compared to~\cite{BHM}, is to deal with this a priori
{\em lack of regularity} in general.

We observe also that, in~\eqref{EQt} or~\eqref{EQ:BHM1}, we {\em
do not ask for any additional boundary condition on $\partial K$}. To understand why this is so, let us make some heuristic comments. First of all, the most intuitive nonlocal counterpart of~\eqref{EQ:BHM0} would be to replace $\Delta u$ in~\eqref{EQ:BHM0} by $\tilde{L}_\eps u$ with $\eps>0$ small, where
$$\tilde{L}_\eps u(x):=\frac{1}{\beta\eps^2}\int_{\R^N\setminus K}\tilde{J}_\eps(x-y)\big(u(y)-u(x)\big)\mathrm{d}y$$
and $\tilde{J}_\eps(z)=\eps^{-N}\tilde{J}(\eps^{-1}z)$, $\tilde{J}$ being a radially symmetric kernel with $\beta=(2N)^{-1}\int_{\R^N}\tilde{J}(z)\,|z|^2\,\D z$. In other words, the nonlocal dispersion operator $Lu$ in~\eqref{EQt} or~\eqref{EQ:BHM1} would be replaced by $\tilde{L}_\eps u$ and the kernel $J$ would be given by $(\beta\eps^2)^{-1}\tilde{J}_\eps$. Furthermore, using for example~\cite{BBM}, the associated energy of~\eqref{EQ:BHM1}, with $\tilde{L}_\eps$ in place of $L$, can be thought of as an approximation of that of~\eqref{EQ:BHM0} (see \cite{Andreu} and  also~\cite{BCV,BR,Hutson} where similar quantities are considered in a biological framework). Now, to see how the Neumann boundary condition in~\eqref{EQ:BHM0} can be recovered from~\eqref{EQt} or~\eqref{EQ:BHM1} with~$\tilde{L}_\eps$ as~$\eps\to0^+$, let us consider for simplicity the case where $\partial K$ is of class $C^1$ with unit normal~$\nu$ and the bounded function~$u$ is of class $C^1(\overline{\R^N\setminus K})$ and is extended as a $C^1(\R^N)$ function still denoted by $u$. Formula~\eqref{EQt} then also holds by continuity in $\overline{\R^N\setminus K}$ and, for every~$x,y\in\overline{\R^N\setminus K}$,
there exists a point $c_{x,y}\in[x,y]$ such that $ u(y)-u(x)=\nabla u(c_{x,y})\cdot(y-x)$. It follows that, for every $x\in\overline{\R^N\setminus K}$,
$$-f(u(x))=\frac{1}{\beta\eps}\int_{\R^N\setminus K}\hat{J}_{\eps}(x-y)\,\nabla u(c_{x,y})\cdot \frac{y-x}{|y-x|}\,\mathrm{d}y,$$
where $\hat{J}_\eps(z)=\eps^{-N}\hat{J}(\eps^{-1}z)$ and $\hat{J}(z)=\tilde{J}(z)|z|$. Then, for all $x\in\partial K$, a formal computation leads to
$$\gamma\,\nabla u(x)\cdot\nu=\lim_{\eps\to0^+}\int_{\R^N\setminus K}\hat{J}_{\eps}(x-y)\,\nabla u(c_{x,y})\cdot \frac{y-x}{|y-x|}\,\mathrm{d}y=\lim_{\eps\to0^+}\!\big(\!-\eps\beta f(u(x))\big)=0,$$
where $\gamma=(1/2)\int_{\R^N}\tilde{J}(z)\,|z_1|\,\D z>0$. Hence, $\nabla u\cdot\nu=0$ on $\partial K$ and~\eqref{EQ:BHM1} is then a reasonable nonlocal counterpart for~\eqref{EQ:BHM0}. The above calculation justifies, at least formally, why no additional boundary condition on $\partial K$ is required in~\eqref{EQt} or~\eqref{EQ:BHM1}.


\subsection{General assumptions, notations and definitions}

Let us now specify the detailed assumptions made throughout the paper. As already mentioned above, we suppose that $f$ is of ``bistable'' type and $J$ is a radially symmetric kernel. More precisely, we will assume that
\be\label{C1}
f\in C^1([0,1]),\ \ f(0)\ge0,\ \ f'(1)<0,
\ee
\be\label{C2}\left\{\baa{l}
J\in L^1(\R^N)\hbox{ is a non-negative, radially symmetric kernel with unit mass},\vspace{3pt}\\
\hbox{there are }0\le r_1<r_2\hbox{ such that }J(x)>0\hbox{ for a.e. }x\hbox{ with }r_1<|x|<r_2,\eaa\right.
\ee
and there exists a function $\phi\in C(\R)$ satisfying
\be\label{C3}
\left\{
\begin{array}{l}
J_1\ast \phi-\phi+f(\phi)\geq 0\text{ in }\R, \vspace{3pt}\\
\phi\hbox{ is increasing in }\R,\ \ \phi(-\infty)=0,\ \ \phi(+\infty)=1,
\end{array}
\right.
\ee
where $J_1\in L^1(\R)$ is the non-negative even function with unit mass given for a.e. $x\in\R$ by
$$J_1(x):=\int_{\R^{N-1}} J(x,y_2,\cdots,y_N)\,\D y_2\cdots\D y_N.$$
We notice that, in addition to the first property in~\eqref{C2}, the second one is immediately fulfilled if $J$ is assumed to be continuous. 
Moreover, we notice that condition~\eqref{C3} implies immediately that $0<\phi<1$ in $\R$. As is well-known (see e.g.~\cite{Bates,Coville}), condition~\eqref{C3} is satisfied if, in addition to~\eqref{C1} and~\eqref{C2}, the following assumptions are made on~$f$ and~$J$:
\be\label{C4}
\left\{
\begin{array}{l}
\exists\,\theta\in(0,1),\ \ f(0)=f(\theta)=f(1)=0,\ \ f<0\hbox{ in }(0,\theta),\ \ f>0\hbox{ in }(\theta,1),\vspace{3pt}\\
\displaystyle\int_{0}^1f>0,\ \ f'(0)<0,\ \ f'(\theta)>0,\ \ f'(1)<0,\ \ f'<1\text{ in }[0,1],\vspace{3pt}\\
\displaystyle\int_{\R}J_1(x)|x|\,\D x<+\infty~~~~\text{  and  }~~~~J\in W^{1,1}(\R^N).
\end{array}
\right.
\ee

Let us also list in this subsection a few notations and definitions used in the paper:
\begin{tabular}{rl}
$|E|$ & is the Lebesgue measure of the measurable set $E$; \\
$\mathds{1}_E$ & is the characteristic function of the set $E$; \\
$B_R$ & is the open Euclidean ball of radius $R>0$ centered at the origin; \\
$B_R(x)$ & is the open Euclidean ball of radius $R>0$ centered at $x\in\R^N$; \\
$\mathcal{A}(R_1,R_2)$ & is the open annulus $B_{R_2}\setminus\overline{B_{R_1}}$ for $0\le R_1<R_2$, by setting $\overline{B_0}=\{0\}$; \\
$\mathcal{A}(x,R_1,R_2)$ & is the open annulus $x+\mathcal{A}(R_1,R_2)$; \\
$g\ast h$ & is the convolution of $g$ and $h$; \\
$g^+$ & is the positive part of $g$, i.e. $g^+:=\max\{0,g\}$.
\end{tabular} \\

Given $\Omega\subset\R^N$ and $p\in[1,\infty]$, we denote by $L^p(\Omega)$ the Lebesgue space of (equivalence classes of) measurable functions $g$ for which the $p$-th power of the absolute value is Lebesgue integrable when $p<\infty$ (resp. essentially bounded when $p=\infty$). When the context is clear, we will write $\|g\|_p$ instead of $\|g\|_{L^p(\Omega)}$. Given $\alpha\in(0,1]$ and $p\in[1,\infty]$, $B_{p,\infty}^\alpha(\R^N)$ stands for the Nikol'skii space consisting in all measurable functions $g\in L^p(\R^N)$ such that
$$ [g]_{B_{p,\infty}^\alpha(\R^N)}:=\sup_{h\ne0}\frac{\|g(\cdot+h)-g\|_{L^p(\R^N)}}{|h|^\alpha}<+\infty. $$
We note that, when $p=\infty$, the space $B_{\infty,\infty}^\alpha(\R^N)$ coincides with the classical H\"older space~$C^{0,\alpha}(\R^N)$. For a set $E\subset\R^N$ and $g:E\to\R$, we set
$$[g]_{C^{0,\alpha}(E)}=\sup_{x\in E,\,y\in E,\,x\neq y}\frac{|g(x)-g(y)|}{|x-y|^{\alpha}}.$$

Let us finally recall some useful notions of regularity of a compact set $K$.

\begin{definition}
Let $\alpha\in(0,1]$. We say that a compact set $K\subset\R^N$ has $C^{0,\alpha}$ boundary if there exist $r>0$, $p\in\N$, $p$ rotations $(R_i)_{1\le i\le p}$ of $\R^N$, $p$ points $(z_i)_{1\le i\le p}$ of $\partial K$ and $p$ functions $(\psi_i)_{1\le i\le p}$ defined in the $(N-1)$-dimensional ball $B_r^{N-1}=\big\{x'\in\R^{N-1};\,|x'|<r\big\}$ of class $C^{0,\alpha}(B^{N-1}_r)$ and such that
\be\label{partialK1}
\partial K=\bigcup_{1\le i\le p}R_i\Big(\big\{x_N=\psi_i(x');\,x'\in B^{N-1}_r\big\}\Big)
\ee
and
\be\label{partialK2}
\overset{\circ}{K}\cap B_r(z_i)=R_i\Big(\big\{x_N>\psi_i(x');\,x'\in B^{N-1}_r\big\}\Big)\cap B_r(z_i)
\ee
for every $1\le i\le p$.
\end{definition}

\begin{definition}\label{DEF:KEPS}
Let $\alpha\in(0,1]$, let $K\subset\R^N$ be a compact convex set with non-empty interior ($\partial K$ is then automatically of class $C^{0,\alpha}$) and let~$(K_\eps)_{0<\eps\leq1}\subset\R^N$ be a family of compact, simply connected sets having $C^{0,\alpha}$ boundary. We say that $(K_\eps)_{0<\eps\leq1}$ is a \emph{family of} $C^{0,\alpha}$ \emph{deformations of} $K$ if the following conditions are fulfilled:
\begin{enumerate}
\item[(i)] $K\subset K_{\eps_1}\subset K_{\eps_2}$ for all $0<\eps_1\leq\eps_2\leq1$;
\item[(ii)] $K_\eps \to K$ as $\eps\downarrow0$ in $C^{0,\alpha}$, in the sense that there exist $r>0$, $p\in\N$, $p$ rotations $(R_i)_{1\le i\le p}$ of $\R^N$, $p$ points $(z_i)_{1\le i\le p}$ of $\partial K$, $p$ functions $(\psi_i)_{1\le i\le p}$ and $p$ families of functions $(\psi_{i,\eps})_{1\le i\le p,\,0<\eps\le 1}$ of class $C^{0,\alpha}(B^{N-1}_r)$ describing $\partial K$ and $\partial K_\eps$ as in~\eqref{partialK1} and~\eqref{partialK2} above, and such that
$$ \|\psi_i-\psi_{i,\eps}\|_{C^{0,\alpha}(B_r^{N-1})}\to0 \quad{\mbox{as }}\eps\downarrow0,\ \hbox{ for every }1\le i\le p.$$
\end{enumerate}
\end{definition}


\SE{Main results}\label{MAIN RES}

The Liouville property for the local problem~\eqref{EQ:BHM0} says that $u=1$ in $\overline{\R^N\setminus K}$ under some geometric conditions on $K$, in particular when $K$ is convex, see~\cite{BHM}. When the obstacle $K$ is convex, we prove that this Liouville property still holds for~\eqref{EQ:BHM1} with the nonlocal operator~$L$. We will actually prove several results, which correspond to various assumptions on the solutions $u$ and the data $f$ and $J$. We will also show the robustness of the Liouville type property with respect to small deformations of the obstacle $K$. The assumptions~\eqref{C1},~\eqref{C2} and~\eqref{C3} will be common assumptions of almost all results. In some statements, assumption~\eqref{C3} is sometimes replaced by the stronger assumption~\eqref{C4}.


\subsection{A first rough Liouville type result}

Under rather mild additional assumptions on $K$, we first state a ``rough" Liouville type property for the solutions of~\eqref{EQ:BHM1}, if $f$ is assumed to be non-negative on the range of $u$.

\begin{prop}\label{LargeGamma}
Let~$K\subset\R^N$ be a compact set such that $\R^N\setminus K$ is connected. Assume that $f\in C^1([0,1])$ and $J$ satisfies~\eqref{C2}. Let $\theta\in[0,1)$ and assume that $f\ge0$ in $[\theta,1]$. Let $u:\overline{\R^N\setminus K}\to[\theta,1]$ be a continuous solution of
\begin{align}
\left\{
\begin{array}{rl}
Lu+ f(u)=0  & \text{ in }\,\overline{\R^N\setminus K},\vspace{3pt} \\
u(x)\to1  & \text{ as }\,|x|\to+\infty.
\end{array}
\right.  \label{LIM:u0}
\end{align}
Then, $u=1$ in $\overline{\R^N\setminus K}$.
\end{prop}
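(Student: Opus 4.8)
The plan is to exploit the asymptotic condition $u(x)\to 1$ together with a sliding/comparison argument using the constant supersolution $1$ and the subsolution structure coming from $f\ge 0$. First I would record the easy sign observations: since $u$ takes values in $[\theta,1]$ and $f\ge 0$ on $[\theta,1]$, the equation $Lu+f(u)=0$ forces $Lu=-f(u)\le 0$ everywhere in $\overline{\R^N\setminus K}$, i.e. $u$ is an \emph{$L$-supersolution}: $\int_{\R^N\setminus K}J(x-y)(u(y)-u(x))\,\D y\le 0$ for every $x$. The goal is then to show that such a bounded $L$-supersolution with limit $1$ at infinity must be $\equiv 1$.

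The main step is a maximum-principle argument at the infimum. Let $m:=\inf_{\overline{\R^N\setminus K}}u\in[\theta,1]$; by the limit condition and continuity, either $m=1$ and we are done, or the infimum is attained at some point $x_0\in\overline{\R^N\setminus K}$ (the limit at infinity prevents a minimizing sequence from escaping to infinity unless $m=1$, and continuity on the closed set gives attainment on the remaining bounded-modulo-$K$ region). At such a minimum point, $u(y)-u(x_0)\ge 0$ for a.e.\ $y$, while $\int J(x_0-y)(u(y)-u(x_0))\,\D y\le 0$; hence the integrand vanishes a.e., so $u(y)=u(x_0)=m$ for a.e.\ $y\in\R^N\setminus K$ with $J(x_0-y)>0$. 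Using~\eqref{C2}, the set $\{y:J(x_0-y)>0\}$ contains (a.e.) the annulus $\mathcal{A}(x_0,r_1,r_2)$, so $u\equiv m$ a.e.\ on a thick annulus around $x_0$ intersected with $\R^N\setminus K$; by continuity this holds everywhere on that set, and in particular $u$ attains its minimum on an open subset of $\R^N\setminus K$. Then I would propagate: connectedness of $\R^N\setminus K$ lets me repeat the argument from any point of the (nonempty, relatively open) set $\{u=m\}$, showing it is also relatively closed, hence $u\equiv m$ on all of $\overline{\R^N\setminus K}$. The limit condition at infinity then forces $m=1$.

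The step I expect to be delicate is the propagation/connectedness argument, because the overlap of annuli $\mathcal{A}(x,r_1,r_2)$ for nearby $x$ need not by itself chain through all of $\R^N\setminus K$ in one stroke — when $r_1>0$ the positivity set of $J$ has a hole, so I cannot immediately conclude that $\{u=m\}$ is open just from one annulus. The clean way around this is: once $u\equiv m$ on $\mathcal{A}(x_0,r_1,r_2)\cap(\R^N\setminus K)$, pick any point $x_1$ in the (open) region $\mathcal{A}(x_0,r_1,r_2)$ that still lies in $\R^N\setminus K$; since $u(x_1)=m$ is again the global minimum, rerun the minimum-point argument at $x_1$ to spread $m$ over $\mathcal{A}(x_1,r_1,r_2)\cap(\R^N\setminus K)$, and iterate. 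Because the annuli have fixed inner and outer radii, a finite chain of such overlapping annuli (using connectedness and, if needed, an open/closed argument on the set of points reachable by such chains) covers any prescribed point of $\R^N\setminus K$; continuity up to $\partial K$ then extends the conclusion to $\overline{\R^N\setminus K}$. A small technical point worth flagging is that I should double-check the infimum is genuinely attained: if $K$ has nonempty interior or awkward geometry this only requires that $\overline{\R^N\setminus K}$ be closed (it is) and that $u\to1$ at infinity, so any value $<1$ of the infimum is realized on a bounded portion of the closed set, where continuity gives a minimizer.
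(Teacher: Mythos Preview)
Your proposal is correct and follows essentially the same approach as the paper: both argue by contradiction that the infimum $m<1$ is attained at some point (you by direct attainment on a bounded closed region, the paper via a minimizing sequence converging to some $\bar x$), use $f(u)\ge 0$ to get $Lu\le 0$ at that point, conclude $u\equiv m$ on an annulus via the positivity of $J$ in $\mathcal{A}(r_1,r_2)$, and then propagate by connectedness of $\R^N\setminus K$ to reach a contradiction with the limit at infinity. Your discussion of the annulus-chaining when $r_1>0$ is more explicit than the paper's (which simply points to the strong maximum principle arguments of Section~\ref{4}), but the substance is identical.
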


One of the main goals of the paper is to understand under which conditions on $K$ this Liouville type property still holds or does not hold when $u$ ranges in the whole interval $[0,1]$.


\subsection{Liouville type properties for convex obstacles}

Our first main theorem is the following result dealing with continuous super-solutions to $L(u)+f(u)=0$ ranging in $[0,1]$.

\begin{theorem}\label{TH:LIOUVILLE}
Let~~$K\subset\R^N$ be a compact convex set. Assume~\eqref{C1},~\eqref{C2},~\eqref{C3} and let
\begin{align}
u\in C(\overline{\R^N\setminus K},[0,1]) \label{continue}
\end{align}
be a function satisfying
\begin{align}
\left\{
\begin{array}{rl}
Lu+f(u)\leq 0  & \text{ in }\,\overline{\R^N\setminus K}, \vspace{3pt}\\
u(x)\to1  & \text{ as }\,|x|\to+\infty.
\end{array}
\right.  \label{LIM:u}
\end{align}
Then, $u=1$ in $\overline{\R^N\setminus K}$.
\end{theorem}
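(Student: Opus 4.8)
Here is a plan of attack for Theorem~\ref{TH:LIOUVILLE}.

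The plan is to run a sliding argument with a one‑parameter family of subsolutions built out of the one–dimensional profile $\phi$ of \eqref{C3}, the convexity of $K$ entering through the distance function $\mathrm{dist}(\cdot,K)$, and the (linearised) stability of the state $u\equiv1$ encoded in $f'(1)<0$ entering to control the behaviour at infinity. It should be kept in mind that the maximum principle alone cannot give the conclusion: a bounded continuous supersolution of $Lv+f(v)=0$ may well have an interior minimum $m\in(0,1)$ with $f(m)<0$ (e.g. $v\equiv m$), and it is the condition $u(x)\to1$ at infinity that must be exploited. One preliminary remark I would make is that $u>0$ on $\overline{\R^N\setminus K}$: otherwise $\inf u<1$ is attained at some $x_0$ with $f(\inf u)\le0$, whence $\inf u=0$ and $f(0)=0$ (using $f(0)\ge0$), so $Lu(x_0)=0$ and a propagation argument forces $u\equiv0$, against $u\to1$.

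\emph{Step 1: the distance subsolution.} For $\tau\in\R$ I would set $w_\tau(x):=\phi\big(\mathrm{dist}(x,K)+\tau\big)$ for $x\in\overline{\R^N\setminus K}$ and show that $w_\tau$ is a subsolution, $Lw_\tau+f(w_\tau)\ge0$ in $\overline{\R^N\setminus K}$. Since $K$ is compact and convex, $d:=\mathrm{dist}(\cdot,K)$ is convex and, on $\R^N\setminus K$, of class $C^1$ with $\nabla d(x)=e_x:=(x-\pi_K(x))/d(x)$, $|e_x|=1$, where $\pi_K$ is the single‑valued projection onto $K$; the variational characterisation of $\pi_K$ gives $e_x\cdot y\le e_x\cdot\pi_K(x)=e_x\cdot x-d(x)<e_x\cdot x$ for every $y\in K$. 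Writing $s=d(x)+\tau$ and using $d(y)\ge d(x)+e_x\cdot(y-x)$ (convexity) together with the monotonicity of $\phi$,
\[
Lw_\tau(x)\ \ge\ \int_{\R^N\setminus K}J(x-y)\,\big(\phi(s+e_x\cdot(y-x))-\phi(s)\big)\,\D y;
\]
since $\phi(s+e_x\cdot(y-x))\le\phi(s)$ for $y\in K$, the domain of integration can be enlarged to $\R^N$ at the cost of decreasing the integral, and the radial symmetry of $J$ then collapses the $\R^N$–integral to $(J_1\ast\phi)(s)-\phi(s)$; by \eqref{C3} this is $\ge-f(\phi(s))=-f(w_\tau(x))$. (Equivalently $w_\tau(x)=\sup_{e\in S^{N-1}}\phi(x\cdot e-h_K(e)+\tau)$, a supremum of subsolutions of the half–space problems.)

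\emph{Steps 2--4: sliding and contact analysis.} Put $\mathcal{T}:=\{\tau\in\R:\ w_\tau\le u\ \text{in}\ \overline{\R^N\setminus K}\}$ and argue by contradiction, assuming $u\not\equiv1$. Then $\tau^*:=\sup\mathcal{T}<+\infty$ (since $w_\tau\le1$ while $w_\tau(x)\to1$ as $\mathrm{dist}(x,K)\to\infty$, so $w_\tau\le u$ fails once $\tau$ is large, at any point where $u<1$), $\mathcal{T}\ne\emptyset$ is the content of Step~2 below, and $\tau^*\in\mathcal{T}$ by continuity. At $\tau=\tau^*$ there are two cases. If $u(x_1)=w_{\tau^*}(x_1)$ for some $x_1$, then from $Lu(x_1)+f(u(x_1))\le0$, $Lw_{\tau^*}(x_1)+f(w_{\tau^*}(x_1))\ge0$, $u-w_{\tau^*}\ge0$ with equality at $x_1$ (so $Lu(x_1)\ge Lw_{\tau^*}(x_1)$) and $f(u(x_1))=f(w_{\tau^*}(x_1))$, every inequality must be an equality; in particular $\int_{\R^N\setminus K}J(x_1-y)(u-w_{\tau^*})(y)\,\D y=0$, so $u=w_{\tau^*}$ a.e. on $\mathcal{A}(x_1,r_1,r_2)\cap(\R^N\setminus K)$ by \eqref{C2}, hence on its closure by continuity; propagating through overlapping annuli over the connected set $\overline{\R^N\setminus K}$ yields $u\equiv w_{\tau^*}$. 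But then all inequalities of Step~1 are equalities everywhere, which forces $\mathrm{dist}(\cdot,K)$ to agree with its tangent affine map near every point, i.e. to be affine on $\R^N\setminus K$ — impossible, as $\mathrm{dist}(\cdot,K)\ge0$ and $\mathrm{dist}(x,K)\to+\infty$ in every direction. If instead $u>w_{\tau^*}$ everywhere, then on each compact set $\overline{B_R}\cap\overline{\R^N\setminus K}$ the gap $u-w_{\tau^*}$ has a positive minimum, so (uniform continuity of $\phi$) $u\ge w_{\tau^*+\eps}$ there for $\eps$ small, while far from $K$ the quantitative estimate of Step~2 leaves room to keep $u\ge w_{\tau^*+\eps}$; thus $\tau^*+\eps\in\mathcal{T}$, contradicting maximality. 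Either way $u\equiv1$.

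\emph{The main obstacle.} The crux is Step~2, namely showing $\mathcal{T}\ne\emptyset$ and, more precisely, producing a quantitative lower bound for $u$ near infinity with enough slack to push $\tau$ past a contact occurring only at infinity. This cannot follow from the maximum principle; what makes it work is $f'(1)<0$, which renders $u\equiv1$ linearly stable, so that $1-u$ must decay from infinity at least as fast as $1-\phi$ along the directions normal to $K$ — equivalently $w_\tau\le u$ once $\tau$ is sufficiently negative. Establishing this (via comparison with suitable barriers for the linearised nonlocal operator, plus a compactness/zooming argument at infinity to cope with the mere continuity of $u$) is the technical heart of the proof. A secondary difficulty, absent in the classical case, is that $L$ is not regularising and $u$ is only continuous, so all the pointwise steps above — evaluation at contact and minimum points, the equality analysis, the propagation along overlapping annuli — have to be carried out at the level of continuous functions, using only $J\ge0$ and the positivity set furnished by \eqref{C2}.
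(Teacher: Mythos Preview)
Your overall strategy is sound and close in spirit to the paper, but the comparison family is different and you have misdiagnosed where the work lies.

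\textbf{Comparison of approaches.} The paper does \emph{not} use the distance–based family $w_\tau(x)=\phi(\mathrm{dist}(x,K)+\tau)$. Instead it first assumes $\inf u<1$, picks a minimum point $x_0\in\overline{\R^N\setminus K}$, uses convexity of $K$ to choose a unit vector $e$ with $K\subset\{y:\,(y-x_0)\cdot e\le0\}$, and slides the \emph{planar} family $\varphi_r(x)=\phi(x\cdot e-r)$. The contact case is then dispatched in one line: the strong maximum principle forces $u\equiv\varphi_{r_*}$ on the half-space $\{x\cdot e\ge x_0\cdot e\}$, but $\varphi_{r_*}$ is constant along directions $e^\perp$ while $u(x_0+te^\perp)\to1$, a contradiction. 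Your route to the contradiction (all inequalities of Step~1 become equalities, hence $\mathrm{dist}(\cdot,K)$ is locally affine, hence $\nabla d=e_x$ is locally constant, hence globally constant, hence $d$ is affine and unbounded below) works but is longer. Your Step~1 subsolution computation is correct and is essentially the envelope of the paper's planar computations over all supporting directions of $K$.

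\textbf{The misidentified difficulty.} You single out Step~2 (nonemptiness of $\mathcal{T}$, and the analogous far-field slack in the no-contact case) as the ``technical heart'', calling for barriers for the linearised operator and compactness/zooming at infinity. It is not: both points are handled by a soft weak maximum principle, exactly as in the paper's Lemma~\ref{WEAK} and Lemma~\ref{sub-solution}. Since $u\to1$, there is $R_0$ with $K\subset B_{R_0}$ and $u\ge1-c_0$ on $\R^N\setminus B_{R_0}$, where $c_0>0$ is chosen so that $f'\le-c_1<0$ on $[1-c_0,\infty)$. On the compact set $\overline{B_{R_0}\setminus K}$ one has $u\ge\gamma>0$ (your preliminary remark), so for $\tau$ sufficiently negative $w_\tau\le\gamma\le u$ there; on $\R^N\setminus B_{R_0}$ one then applies the weak maximum principle (subsolution $w_\tau$, supersolution $u$, $u\ge1-c_0$, $\limsup_{|x|\to\infty}(w_\tau-u)\le0$, and $w_\tau\le u$ on $B_{R_0}\setminus K$) to conclude $w_\tau\le u$ globally. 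The same device, applied with $w_{\tau^*+\eps}$ in place of $w_\tau$, handles your no-contact case without any ``quantitative estimate'': once $w_{\tau^*+\eps}\le u$ on $\overline{B_{R_0}\setminus K}$ (compactness plus $u>w_{\tau^*}$), the weak maximum principle in $\R^N\setminus B_{R_0}$ does the rest. No decay rates, no linearised barriers, no zooming are needed; only continuity of $u$ and $\phi$ and the sign of $f'$ near $1$. The paper's Lemma~\ref{WEAK} is stated for half-spaces, but its proof uses nothing about the shape of $H$ beyond $K\subset H^c$ and continuity on $\overline H$, and goes through verbatim for $H=\R^N\setminus\overline{B_{R_0}}$.

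In short: your distance-based scheme is a legitimate variant and would succeed, but replace the vague ``quantitative estimate of Step~2'' by a direct appeal to a weak maximum principle in the region $\{u\ge1-c_0\}$; that is the actual mechanism, and it makes both Step~2 and the no-contact case of Step~3 essentially trivial.
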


If we ask for a solution of~\eqref{EQ:BHM1} instead of a super-solution, it turns out that the regularity or limiting conditions required on $u$ to obtain a Liouville type result can be considerably weakened, by strengthening the assumptions made on $f$ and/or $J$.

Firstly, the continuity assumption~\eqref{continue} can be relaxed provided the nonlinearity does not vary too much.  More precisely, we will prove the following result.

\begin{theorem}\label{PROP:CONTINUE}
Let $K\subset\R^N$ be a compact convex set. Assume~\eqref{C1},~\eqref{C2},~\eqref{C3} and
\begin{align}
\max_{[0,1]}f'<\frac{1}{2}. \label{CdN-f}
\end{align}
Let $u:\R^N\setminus K\to[0,1]$ be a measurable function satisfying
$$\left\{
\begin{array}{rl}
Lu+f(u)= 0  & \text{ a.e. in }\,\R^N\setminus K, \vspace{3pt}\\
u(x)\to1  & \text{ as }\,|x|\to+\infty.
\end{array}
\right.$$
Then, $u=1$ a.e. in $\R^N\setminus K$.
\end{theorem}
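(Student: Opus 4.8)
The natural strategy is to deduce this from the already-proven Theorem \ref{TH:LIOUVILLE}: if I can show that the measurable solution $u$ must in fact be (a.e. equal to) a continuous function on $\overline{\R^N\setminus K}$, then it is a continuous solution of $Lu+f(u)=0$, hence a super-solution, hence $u\equiv1$ by Theorem \ref{TH:LIOUVILLE}. So the whole game is a regularity bootstrap: upgrade ``measurable and bounded'' to ``continuous'', using only the structure of the equation and the smallness condition $\max_{[0,1]}f'<1/2$.

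The key observation is that the equation can be rewritten in fixed-point form. Writing $a(x):=\int_{\R^N\setminus K}J(x-y)\,\D y\in[0,1]$ and noting $Lu(x)=(J\mathds{1}_{\R^N\setminus K}\ast u)(x)-a(x)u(x)$, the equation $Lu+f(u)=0$ becomes
\begin{equation*}
a(x)\,u(x)-f(u(x))=\big(J\mathds{1}_{\R^N\setminus K}\ast u\big)(x)\quad\text{a.e. in }\R^N\setminus K.
\end{equation*}
Here the right-hand side, being a convolution of an $L^1$ kernel with a bounded function, is automatically continuous (indeed uniformly continuous) on all of $\R^N$. So it suffices to show that the map $t\mapsto G(x,t):=a(x)t-f(t)$ can be inverted in $t$ to recover $u(x)$ as a continuous function of the continuous data $(J\mathds{1}_{\R^N\setminus K}\ast u)(x)$ — uniformly enough that continuity is preserved. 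This is exactly where $\max f'<1/2$ enters: I would like $t\mapsto a(x)t-f(t)$ to be strictly increasing with a uniform lower bound on its ``slope'', so that its inverse is Lipschitz with a controlled constant. The derivative is $a(x)-f'(t)$; since $a(x)$ can be as small as $0$ near $K$ (and is close to $0$ there), monotonicity of $G(x,\cdot)$ alone fails when $a(x)$ is small, and one must be more careful. The honest approach is: fix $x_0$, let $\ell=(J\mathds{1}_{\R^N\setminus K}\ast u)(x_0)$ and compare with nearby $x$; from $a(x)u(x)-f(u(x))=(J\mathds{1}\ast u)(x)$ one controls $|u(x)-u(x_0)|$ provided the chord slope of $t\mapsto a(x)t - f(t)$ between $u(x)$ and $u(x_0)$ is bounded below. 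When $a$ is bounded below by a positive constant this is immediate from $\max f'<1/2\le a - \text{(slope)}$; near $K$, where $a\to 0$, one instead uses that the convolution term and $a$ are both continuous and that the equation forces $u(x)$ to stay near the value $t$ solving $-f(t)=\ell$, again using that $-f$ has derivative bounded below by $-1/2$, i.e. that $t\mapsto -f(t)$ has slope $>-1/2 > -1$, so $t\mapsto t - (\text{something})$... — more precisely one applies the implicit-function-type estimate to the relation $a(x)t - f(t) = (J\mathds{1}\ast u)(x)$ viewing $a$ and the convolution as continuous parameters and using $\inf_{t}(1 - f'(t)) > 1/2 > 0$ to guarantee a unique continuous branch. (In fact $\max f' < 1/2$ gives $1 - f' > 1/2$, which is the quantitative nondegeneracy one needs.)

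Concretely I would proceed in these steps. First, record that $w:=J\mathds{1}_{\R^N\setminus K}\ast u$ is uniformly continuous and bounded on $\R^N$, with modulus of continuity controlled by $\|J\|_{L^1}$ and the modulus of continuity of translates of $J$ in $L^1$. Second, introduce $\Phi(x,t):=a(x)t-f(t)$ and show that for each fixed $x$ the equation $\Phi(x,t)=w(x)$ has, among $t\in[0,1]$, a solution, namely $u(x)$; then prove a \emph{uniform} stability statement: if $\Phi(x,t)=w(x)$ and $\Phi(x',t')=w(x')$ with $x'$ near $x$, then $|t-t'|$ is small, with smallness depending only on $|a(x)-a(x')|$, $|w(x)-w(x')|$ and the nondegeneracy constant $1-\max f'>1/2$. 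The mechanism: $\Phi(x,t)-\Phi(x,t') = (a(x)-a(x'))t' + w(x)-w(x') - (a(x')-a(x))... $ — rearranged, $\big(a(x) - \tfrac{f(t)-f(t')}{t-t'}\big)(t-t') = w(x)-w(x') + (a(x')-a(x))t'$, and the bracket is $\ge a(x) - \max f'$; when $a(x)$ is not small this closes immediately, and when $a(x)$ is small one first shows $u(x)$ is then close to the root of $-f(t)=w(x)$ (unique since $-f$ is strictly increasing, as $f'<1/2<1$... wait, $f'<1/2$ gives $(-f)'=-f'>-1/2$, not strictly increasing, so one actually needs $1-f'>1/2 \Leftrightarrow f'<1/2$ applied to $t\mapsto t-f(t)$ — I should reformulate the whole thing around the increasing function $t\mapsto t - f(t)$, rewriting the equation as $(1-a(x))u(x) + (a(x)u(x)-f(u(x)))$... in any case the correct nondegenerate quantity is $1-\max_{[0,1]}f' > 1/2$). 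Third, conclude that $u$ agrees a.e. with a continuous function $\tilde u$ on $\R^N\setminus K$, extend $\tilde u$ continuously to $\overline{\R^N\setminus K}$ using uniform continuity, check $\tilde u(x)\to1$ as $|x|\to\infty$ (inherited from $u$ since they agree a.e. and $u$ is bounded, using the convolution representation), and apply Theorem \ref{TH:LIOUVILLE} to $\tilde u$ to get $\tilde u\equiv1$, hence $u=1$ a.e.

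I expect the main obstacle to be the degeneracy of $a(x)$ near $\partial K$: the naive ``invert $\Phi(x,\cdot)$'' argument degenerates exactly where $a(x)\to0$, and the condition $\max f'<1/2$ is precisely calibrated so that, after rewriting the equation so that the dominant linear coefficient is $1-a(x)+(\text{contribution of }-f')$ rather than $a(x)$, one still has a uniform lower bound ($>1/2$) on the relevant chord slope. Getting this rewriting right — i.e. isolating $u(x)$ against the genuinely nondegenerate part of the operator $\mathrm{Id}-f$ rather than against $a(x)\,\mathrm{Id}-f$ — and verifying the uniform modulus of continuity carefully near $K$, is the technical heart of the proof. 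The transfer of the limit condition at infinity and the continuous extension up to $\partial K$ are routine once the interior continuity and its uniform modulus are in hand.
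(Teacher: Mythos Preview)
Your overall strategy is exactly the paper's: upgrade the measurable solution to a uniformly continuous one via a regularity argument, then invoke Theorem~\ref{TH:LIOUVILLE}. The paper packages the regularity step as Lemma~\ref{LEMMA:INT} together with Remark~\ref{RK:CVX}, and the proof of Theorem~\ref{PROP:CONTINUE} is then two lines.

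However, you are missing the one geometric observation that makes the regularity step work, and as a result your sketch wanders into a non-issue. You write that $a(x)=\int_{\R^N\setminus K}J(x-y)\,\D y$ ``can be as small as $0$ near $K$'' and then spend the rest of the proposal trying to cope with this degeneracy. But this is false when $K$ is \emph{convex}: for every $x\in\R^N\setminus K$ one has $a(x)\ge 1/2$. Indeed, if $p$ is the projection of $x$ onto $K$, then $K$ lies in the closed half-space $\{y:(y-p)\cdot(x-p)\le 0\}$, so $\R^N\setminus K$ contains the open half-space on the other side; since $x$ itself lies strictly in that half-space and $J$ is radially symmetric with unit mass, the integral of $J(x-\cdot)$ over that half-space is at least $1/2$. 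This is precisely Remark~\ref{RK:CVX}, and it is the reason the threshold in~\eqref{CdN-f} is $1/2$.

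With this in hand, your own setup closes immediately. The map $t\mapsto G(x,t)=a(x)t-f(t)$ has $\partial_t G(x,t)=a(x)-f'(t)\ge \tfrac12-\max_{[0,1]}f'>0$ uniformly in $(x,t)\in(\R^N\setminus K)\times[0,1]$, so $G(x,\cdot)$ is a $C^1$ diffeomorphism onto its image with inverse Lipschitz constant bounded independently of $x$. Since $G(x,u(x))=(J\mathds{1}_{\R^N\setminus K}\ast u)(x)$ a.e.\ and the right-hand side is uniformly continuous in $x$ (as is $a$), you recover $u$ as a uniformly continuous function on $\R^N\setminus K$, extend to the closure, and apply Theorem~\ref{TH:LIOUVILLE}. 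All of your attempted reformulations around $t\mapsto t-f(t)$ and the ``degeneracy near $\partial K$'' are unnecessary; drop them and insert the half-space argument above.
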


Secondly, assuming that $f$ and $J$ satisfy~\eqref{C4} instead of~\eqref{C3}, that $J\in L^2(\R^N)$ and is compactly supported, and that $f$ does not vary too much or $u$ is a priori uniformly continuous, then the assumptions on the asymptotic behaviour of $u$ at infinity can be noticeably weakened. More precisely, the following result holds.

\begin{theorem}\label{TH:LIOUVILLE2}
Let~$K\subset\R^N$ be a compact convex set and assume that $f$ and $J$ satisfy~\eqref{C1},~\eqref{C2} and~\eqref{C4}. Assume further that $J$ is compactly supported and $J\in L^2(\R^N)$. If $u:\overline{\R^N\setminus K}\to[0,1]$ is uniformly continuous in $\overline{\R^N\setminus K}$ and obeys
\be\left\{\begin{array}{rcl}
Lu+f(u) & \!\!=\!\! & 0\ \hbox{ in }\,\overline{\R^N\setminus K},\vspace{3pt}\\
\ds{\sup_{\R^N\setminus K}}\,u &  \!\!=\!\! & 1,\end{array}\right.  \label{LIM:cu9}
\ee
then $u=1$  in $\overline{\R^N\setminus K}$. Similarly, if~\eqref{CdN-f} holds and if $u:\R^N\setminus K\to[0,1]$ is a measurable function satisfying
\be\left\{\begin{array}{rcl}
Lu+f(u) & \!\!=\!\! & 0\ \hbox{ a.e. in }\,\R^N\setminus K,\vspace{3pt}\\
\ds\mathop{{\rm{ess}}\,{\rm{sup}}}_{\R^N\setminus K}\,u &  \!\!=\!\! & 1,\end{array}\right.  \label{LIM:u9}
\ee
then $u=1$ a.e. in $\R^N\setminus K$.
\end{theorem}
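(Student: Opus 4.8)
The plan is to reduce both assertions to a situation where Theorem~\ref{TH:LIOUVILLE} (or Theorem~\ref{PROP:CONTINUE}) applies, the key point being that the genuinely weak hypothesis ``$\sup u=1$'' (resp. ``$\mathop{\rm ess\,sup} u=1$'') must be upgraded to the limiting condition ``$u(x)\to1$ as $|x|\to+\infty$'' used in those results. The natural device is a sliding/translation argument: pick a sequence of points $x_n\in\R^N\setminus K$ along which $u(x_n)\to1$, translate by setting $u_n(x):=u(x+x_n)$, and pass to the limit. Since $K$ is bounded and $|x_n|$ need not stay bounded, one must distinguish whether $(x_n)$ is bounded or not; if it is bounded one extracts a convergent subsequence and directly obtains $u=1$ at an interior point, while if $|x_n|\to+\infty$ the translated obstacles $K-x_n$ escape to infinity, so in the limit the equation is posed on all of $\R^N$ (with $L$ replaced by the full convolution operator $J\ast v-v$).

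First I would establish the needed compactness. In the uniformly continuous case, $(u_n)$ is equi-bounded and equi-uniformly-continuous, so by Arzelà–Ascoli a subsequence converges locally uniformly to some $v\in C(\R^N,[0,1])$ (or to $v\in C(\overline{\R^N\setminus K_\infty},[0,1])$ for a translated limit obstacle $K_\infty$ in the bounded case); the dominated convergence theorem, using $J\in L^1$, lets one pass to the limit in the nonlocal equation, so $v$ solves $J\ast v-v+f(v)=0$ on $\R^N$ (resp. $Lv+f(v)=0$ on $\overline{\R^N\setminus K_\infty}$), with $v(0)=1$, hence $v$ attains its maximum value $1$ at an interior point. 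In the merely measurable case (under~\eqref{CdN-f}) I would instead invoke the regularity theory that the paper develops under~\eqref{C4} and $J\in L^2$ with compact support — the assumption $\max f'<1/2$ together with the equation forces a uniform modulus of continuity (a Nikol'skii/H\"older estimate of the type encoded by the spaces $B_{p,\infty}^\alpha$ introduced earlier) — so that, after modification on a null set, $u$ is (uniformly) continuous and the first argument applies.

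Next I would run a strong maximum principle at the interior maximum. If $v(x_0)=1=\max v$, then evaluating the equation at $x_0$ gives $\int J(x_0-y)(v(y)-v(x_0))\,\D y = -f(1)\ge 0$ (here one uses $f(1)=0$ from~\eqref{C4}, or $f\ge0$ near $1$), while the integrand is $\le0$; hence $v=1$ a.e.\ on the support-shifted set $x_0+\{J>0\}$, and iterating over the connected set $\R^N$ (using the positivity annulus $r_1<|z|<r_2$ from~\eqref{C2}, which makes the ``reachable'' set all of $\R^N$) yields $v\equiv1$. In the bounded-$(x_n)$ case one propagates through the connected set $\overline{\R^N\setminus K_\infty}$ in the same way. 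Either way the limit is identically $1$; in particular $v(z)=1$ for the fixed shift $z$ corresponding to a reference point, which feeds back to give $\liminf_{|x|\to\infty} u(x)=1$, i.e.\ $u(x)\to1$ as $|x|\to+\infty$ (combining with $u\le1$). At this stage the limiting condition of Theorem~\ref{TH:LIOUVILLE} is verified — noting that a solution is in particular a super-solution — so that theorem (or Theorem~\ref{PROP:CONTINUE} in the measurable case) closes the argument: $u=1$ in $\overline{\R^N\setminus K}$ (resp.\ a.e.).

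The main obstacle I anticipate is the measurable case: extracting a uniform modulus of continuity for $u$ purely from the equation, the bound $\max_{[0,1]}f'<1/2$, and $J\in L^2$ compactly supported. The mechanism is that $Lu = -f(u)$ lets one write $u(x)=\big(\int_{\R^N\setminus K}J(x-y)\,\D y\big)^{-1}\big(J\ast_{\!K}u(x)+f(u(x))\big)$, roughly $u \approx J\ast u + f(u)$, and a fixed-point/bootstrap on the increments $\|u(\cdot+h)-u\|$ in $L^p$ converges provided the contraction factor $\|J\|_1 + \max f' = 1 + \max f'$ is controlled — which is exactly where the strict inequality $\max f'<1/2$ (beating the factor $1/2$ that also appears in Theorem~\ref{PROP:CONTINUE}) enters, together with an interpolation via the Nikol'skii space $B^\alpha_{2,\infty}$ and the $L^2$ smoothness transferred from $J$. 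Getting this estimate uniform up to $\partial K$ (not merely in the interior) and handling the normalizing factor $\int_{\R^N\setminus K}J(x-y)\,\D y$, which degenerates near the obstacle, is the delicate technical heart; everywhere else the proof is a fairly standard compactness-plus-strong-maximum-principle routine.
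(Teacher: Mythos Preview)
Your reduction of the measurable case to the uniformly continuous one is correct and matches the paper: under~\eqref{CdN-f} and convexity of~$K$, Lemma~\ref{LEMMA:INT} together with Remark~\ref{RK:CVX} gives a uniformly continuous representative directly, so this is not the ``delicate technical heart'' you suggest (and note that neither $J\in L^2$ nor compact support of $J$ is needed there). Your Step~1 --- the translation/compactness argument along a sequence $x_n$ with $u(x_n)\to1$, Arzel\`a--Ascoli, strong maximum principle for the limit --- is also correct and is exactly Step~1 of the paper's proof of Lemma~\ref{lem:lim}.

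The genuine gap is the sentence ``which feeds back to give $\liminf_{|x|\to\infty} u(x)=1$''. Knowing that the translated sequence $u(\cdot+x_n)$ converges locally uniformly to $1$ only tells you that $u\ge 1-\eps$ on \emph{some} large ball $\overline{B_R(x^*)}$ far from $K$; it says nothing about points $y$ with $|y|$ large that are far from every $x_n$. Along such a sequence $(y_n)$ the same compactness argument yields a limit $w$ solving $J\ast w-w+f(w)=0$ in $\R^N$, but $w$ could a priori be $\theta$, or a planar wave $\phi(x\cdot e-r)$, and you have no mechanism to rule these out. The paper bridges this gap with substantial machinery you do not invoke: Section~\ref{6} constructs maximal solutions $v_{x_0,R}$ of~\eqref{eq:BR} in balls, proves $v_{x_0,R}\to1$ locally uniformly as $R\to+\infty$ (Lemma~\ref{prop:limVR}), and builds from them compactly supported continuous sub-solutions (Lemma~\ref{cla:subsol}). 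Steps~2--4 of the proof of Lemma~\ref{lem:lim} then place such a sub-solution below $u$ in one large ball and use the sweeping principle (Lemma~\ref{TH:SWEEP}) to rotate and push it outward through annuli, forcing $u\ge 1-\eps$ in all of $\R^N\setminus B_{|x^*|-1}$. This is precisely where the hypotheses ``$J$ compactly supported'' and ``$J\in L^2(\R^N)$'' are actually used (for the energy minimisation in Lemma~\ref{existence-BR} and the sweeping geometry), which your outline leaves unexplained.
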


\begin{remark}
Condition~\eqref{CdN-f} ensures that $u$ actually has a uniformly continuous representative in $\overline{\R^N\setminus K}$ (see Lemma~\ref{LEMMA:INT} and Remark~\ref{RK:CVX}). However, if $u$ is already known to be uniformly continuous, then Theorem~\ref{TH:LIOUVILLE2} provides the same conclusion without assumption~\eqref{CdN-f} (see Lemma~\ref{lem:lim} for further details).
\end{remark}


\subsection{Robustness of the Liouville property for nearly convex obstacles $K$}

Under some flatness assumptions on $f$, and following a line of ideas in~\cite{Bouhours}, it turns out that the Liouville property is still available under small H\"older perturbations of a given convex obstacle $K$. Namely, the following result holds.

\begin{theorem}\label{TH:PERTURB}
Let $\alpha\in(0,1]$, let $K\subset\R^N$ be a compact convex set with non-empty interior and let $(K_\eps)_{0<\eps\leq1}$ be a family of $C^{0,\alpha}$ deformations of $K$. Assume~\eqref{C1},~\eqref{C2},~\eqref{C4} and suppose that $J\in B_{1,\infty}^\alpha(\R^N)$ and
$$\max_{[0,1]}f'<\inf_{0<\eps\leq 1}\inf_{x\in\R^N\setminus K_\eps}\,\|J(x-\cdot)\|_{L^1(\R^N\setminus K_\eps)}.$$
For $0<\eps\leq1$, let $L_\eps$ be the operator given by, for every $v\in L^{\infty}(\R^N\setminus K_\eps)$,
$$L_\eps v(x):=\int_{\R^N\setminus K_\eps} J(x-y)\big(v(y)-v(x)\big)\,\mathrm{d}y.$$
Then there exists $\eps_0\in(0,1]$ such that for all $\eps\in(0,\eps_0]$ the unique measurable solution $u_\eps$ of
\begin{equation}
\left\{
\begin{array}{rl}
L_\eps u_\eps+f(u_\eps)=0  & \text{a.e. in }\,\R^N\setminus K_\eps, \vspace{3pt}\\
0\leq u_\eps\leq 1 & \text{a.e. in }\,\R^N\setminus K_\eps, \vspace{3pt}\\
u_\eps(x)\to1  & \text{as }\,|x|\to+\infty,
\end{array}
\right.\label{eq:ueps}
\end{equation}
is $u_\eps=1$ a.e. in $\R^N\setminus K_\eps$.
\end{theorem}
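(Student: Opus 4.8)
The plan is to reduce Theorem~\ref{TH:PERTURB} to Theorem~\ref{TH:LIOUVILLE2} (or to Theorem~\ref{PROP:CONTINUE}) by a contradiction/compactness argument over the family $(K_\eps)_{0<\eps\le1}$, exploiting the $C^{0,\alpha}$ convergence $K_\eps\to K$. First I would establish, once and for all, well-posedness: under the standing assumption $\max_{[0,1]}f'<\inf_{\eps}\inf_{x\notin K_\eps}\|J(x-\cdot)\|_{L^1(\R^N\setminus K_\eps)}$, the map $v\mapsto L_\eps v+f(v)$ is, up to the multiplication operator $-c_\eps(x)v$ with $c_\eps(x)=\|J(x-\cdot)\|_{L^1(\R^N\setminus K_\eps)}$, a contraction-type perturbation, so that~\eqref{eq:ueps} has a \emph{unique} bounded measurable solution $u_\eps$ (this is presumably proved in an earlier section and I will simply invoke it). Moreover, the strict inequality $\max f'<c_\eps$ combined with $J\in B_{1,\infty}^\alpha$ should, via a lemma of the type of Lemma~\ref{LEMMA:INT}, give a \emph{uniform} modulus of continuity for $u_\eps$: each $u_\eps$ has a representative in $C^{0,\beta}$ with Hölder seminorm bounded independently of $\eps$ (here the Nikol'skii regularity of $J$ is what propagates to $u_\eps$ through the equation $u_\eps=(f(u_\eps)+\int J(x-y)u_\eps(y)\,dy)/c_\eps(x)$, together with $\alpha$-regularity of $x\mapsto c_\eps(x)$ coming from the $C^{0,\alpha}$ description of $\partial K_\eps$).

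Next, I would argue by contradiction: suppose there is a sequence $\eps_n\downarrow0$ with $u_{\eps_n}\not\equiv1$. Since $\R^N\setminus K_{\eps_n}\supset\R^N\setminus K_1$ and the $u_{\eps_n}$ are uniformly bounded and uniformly Hölder on $\overline{\R^N\setminus K_1}$ (say), Arzelà–Ascoli plus a diagonal extraction yields $u_{\eps_n}\to u_\infty$ locally uniformly on $\overline{\R^N\setminus K}$ along a subsequence, where $u_\infty\in C(\overline{\R^N\setminus K},[0,1])$. I would then pass to the limit in the equation: because $K_{\eps_n}\to K$ in measure (indeed in $C^{0,\alpha}$, so $|K_{\eps_n}\setminus K|\to0$) and $J\in L^1$, for fixed $x\notin K$ one has $\int_{\R^N\setminus K_{\eps_n}}J(x-y)(u_{\eps_n}(y)-u_{\eps_n}(x))\,dy\to\int_{\R^N\setminus K}J(x-y)(u_\infty(y)-u_\infty(x))\,dy$, using the local uniform convergence on compacta and dominated convergence on the tails (here one must be a little careful near $\partial K$, using that $|K_{\eps_n}\setminus K|\to0$ to control $\int_{K_{\eps_n}\setminus K}J(x-y)\,dy\to0$). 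Hence $Lu_\infty+f(u_\infty)=0$ in $\overline{\R^N\setminus K}$. The condition at infinity is the delicate point: I need $\sup_{\R^N\setminus K}u_\infty=1$, or even $u_\infty(x)\to1$ as $|x|\to\infty$. This should be extracted from a uniform-in-$\eps$ version of the far-field behaviour in~\eqref{eq:ueps}; concretely, one shows that for every $\delta>0$ there is $R_\delta$, independent of small $\eps$, with $u_\eps\ge1-\delta$ on $\{|x|\ge R_\delta\}$ — typically via a sliding/sub-solution argument using the one-dimensional front $\phi$ from~\eqref{C3} (available since~\eqref{C4}$\Rightarrow$\eqref{C3}) as a barrier that does not see the (bounded) obstacle far away, a construction which is uniform in $\eps$ because all $K_\eps\subset K_1$ is bounded.

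With $u_\infty\in C(\overline{\R^N\setminus K},[0,1])$ solving $Lu_\infty+f(u_\infty)=0$ in $\overline{\R^N\setminus K}$ with $\sup u_\infty=1$ (and $K$ convex, $f,J$ satisfying~\eqref{C1},~\eqref{C2},~\eqref{C4}, $J$ compactly supported — note $B_{1,\infty}^\alpha$ together with the compact support and $J\in W^{1,1}\cap L^1$ from~\eqref{C4} gives $J\in L^2$), Theorem~\ref{TH:LIOUVILLE2} applies and forces $u_\infty\equiv1$ on $\overline{\R^N\setminus K}$. But then $u_{\eps_n}\to1$ locally uniformly, so for $n$ large $u_{\eps_n}\ge1-\eta$ on a large ball for any prescribed $\eta$; combined with the uniform far-field estimate $u_{\eps_n}\ge1-\delta$ outside $B_{R_\delta}$, we get $u_{\eps_n}\ge1-\max(\eta,\delta)$ on all of $\R^N\setminus K_{\eps_n}$. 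Feeding this back into a maximum-principle / strong-maximum-principle argument for the operator $L_{\eps_n}+f(\cdot)$ — using $f'(1)<0$ and the structural inequality $\max f'<c_{\eps_n}$, exactly the mechanism already used to prove Theorems~\ref{TH:LIOUVILLE}–\ref{TH:LIOUVILLE2} — upgrades $u_{\eps_n}\ge1-\text{small}$ to $u_{\eps_n}\equiv1$, contradicting the choice of the sequence. Hence $u_\eps\equiv1$ for all small $\eps$, i.e. there is $\eps_0\in(0,1]$ as claimed.

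The main obstacle I anticipate is \emph{twofold and located at the interface $\partial K$}: first, proving the \emph{uniform} (in $\eps$) Hölder modulus of continuity for $u_\eps$ up to $\partial K_\eps$, since the constant $c_\eps(x)=\|J(x-\cdot)\|_{L^1(\R^N\setminus K_\eps)}$ degenerates in regularity as $x$ approaches the moving boundary and one must quantify its $C^{0,\alpha}$ dependence uniformly in $\eps$ using the uniform $C^{0,\alpha}$ charts from Definition~\ref{DEF:KEPS}; and second, making the passage to the limit in the nonlocal term legitimate near $\partial K$, which again rests on the quantitative smallness $|K_\eps\setminus K|\to0$ and on the equicontinuity just discussed. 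The far-field uniformity (the barrier argument giving $u_\eps\ge1-\delta$ outside a fixed ball) is routine in spirit but needs to be stated carefully so that $R_\delta$ does not depend on $\eps$; everything else is an application of the already-established Liouville theorems and the maximum principle for $L_\eps+f$.
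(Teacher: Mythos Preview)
Your strategy is essentially the paper's: uniform $C^{0,\alpha}$ bounds via Lemma~\ref{LEMMA:INT}, Arzel\`a--Ascoli to extract a limit $u_0$, a uniform-in-$\eps$ far-field estimate (this is exactly the paper's Lemma~\ref{PROP:UNIFCV}), passage to the limit in the equation, the Liouville theorem on the convex $K$ to force $u_0\equiv1$, and a contradiction to conclude $u_{\eps_n}\equiv1$ for large $n$.

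Two corrections. First, you invoke Theorem~\ref{TH:LIOUVILLE2}, but that theorem needs $J$ compactly supported and $J\in L^2(\R^N)$, neither of which is assumed in Theorem~\ref{TH:PERTURB} (and no, $J\in W^{1,1}$ does not give $J\in L^2$ in dimension $N\ge2$). This is harmless, because your uniform far-field estimate already gives $u_0(x)\to1$ as $|x|\to+\infty$, so the paper simply applies Theorem~\ref{TH:LIOUVILLE} instead; switch the citation and nothing else changes. Second, the obstacle you anticipate---uniform H\"older regularity up to $\partial K_\eps$---is less delicate than you fear: Lemma~\ref{LEMMA:INT} gives $[u_\eps]_{C^{0,\alpha}}\le 2[J]_{B^\alpha_{1,\infty}}/(\inf c_\eps-\max f')$, and the denominator is bounded below \emph{uniformly in $\eps$} by the standing hypothesis; the pointwise regularity of $c_\eps(x)$ plays no role.

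For the endgame the paper is slightly cleaner than your ``upgrade via maximum principle'' sketch: if $u_{\eps_n}\not\equiv1$, Proposition~\ref{LargeGamma} forces $\min u_{\eps_n}<\theta$, attained at some $x_n$; the uniform far-field keeps $(x_n)$ bounded, so $x_n\to\bar x$ and $u_{\eps_n}(x_n)\to u_0(\bar x)=1$, contradicting $u_{\eps_n}(x_n)<\theta$.
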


\begin{remark}
It should be noted that the monotonicity assumption (i) in Definition~\ref{DEF:KEPS} has been made for simplicity and is not necessary for our purposes. Moreover, the conclusion of Theorem~\ref{TH:PERTURB} remains true whenever $(K_\eps)_{0<\eps\leq1}$ is a family of $C^{0,\alpha}$ deformations of any compact set~$K$ for which the conclusion of Theorem~\ref{PROP:CONTINUE} is valid. Since there exist some smooth, compact, non-convex and simply connected sets which are $C^{0,\alpha}$ close to a smooth, compact and convex set, Theorem~\ref{TH:PERTURB} implies that the Liouville property holds for some smooth, compact and non-convex obstacles, and then also for their $C^{0,\alpha}$ perturbations. Finally, we conjecture that the Liouville property of Theorem~\ref{PROP:CONTINUE} holds for any starshaped compact obstacle as well.\par
However, as in the local case (see~\cite[Theorem~6.5]{BHM}), the above Liouville type properties cannot be expected for general obstacles. For example, one can easily find counterexamples if $K$ is no longer simply connected. Take for instance $K=\overline{\mathcal{A}(1,2)}=\overline{B_2}\setminus B_1$ and suppose that $J$ is supported in $B_{1/2}$. Then, the function $u$ defined by
$$u(x)=\left\{
\begin{array}{ll}
1  & \text{ if }x\in\R^N\setminus B_2, \vspace{3pt}\\
0  & \text{ if }x\in\overline{B_1},
\end{array}
\right.$$
is a continuous solution of~\eqref{LIM:u0}. Yet, $u$ is not identically $1$ in the whole set $\R^N\setminus K$.
\end{remark}

\noindent{\bf{Outline of the paper.}} The following first sections are concerned with general results on the solutions to problems~\eqref{EQt} or~\eqref{EQ:BHM1}. Namely, in Section~\ref{3}, we show that the solutions are uniformly continuous, more precisely they have a uniformly continuous representative, if rather mild assumptions are made on~$f$. In Section~\ref{4}, we give several comparison principles that fit our purposes. We then use these comparison principles in Section~\ref{5} to construct a radially symmetric lower bound for the solutions. In Section~\ref{6}, we study an auxiliary problem which will enable us to pave the way towards the proof of Theorem~\ref{TH:LIOUVILLE2}. The remaining part of the paper is devoted to the proofs of our main results. In Section~\ref{8}, we prove, at a stroke, Theorems~\ref{TH:LIOUVILLE} and~\ref{PROP:CONTINUE}, and with more work we show how to relax the assumptions on~$u$ when the kernel~$J$ is compactly supported, that is we prove Theorem~\ref{TH:LIOUVILLE2}. In Section~\ref{9}, as a preliminary result we prove the rough Liouville-type result Proposition~\ref{LargeGamma} and then we establish our robustness result Theorem~\ref{TH:PERTURB}.


\SE{Some auxiliary regularity results}\label{3}

Throughout this section, $K$ is any compact subset of $\R^N$, $f$ is any $C^1(\R)$ function, and $J$ is any $L^1(\R^N)$ non-negative and radially symmetric kernel with unit mass. For $x\in\R^N$, we write
$$\mathcal{J}(x):=\int_{\R^N\setminus K}J(x-y)\,\mathrm{d}y.$$
Notice that $\mathcal{J}$ is a uniformly continuous function in $\R^N$. In the sequel, for any $\delta>0$, we will denote $K_{\delta}$ the closed thickening of $K$ with width $\delta$, defined by
$$ K_\delta:=K+\overline{B_\delta}.$$

We now prove that, when $J$ is compactly supported and $f'$ is not too large in $[0,1]$, then the measurable solutions $u$ to~\eqref{EQ:BHM1} are continuous far away from the obstacle.

\begin{lemma}\label{PROP:CONE}
Suppose that $K\subset\R^N$ is a compact set and that $J$ is supported in the ball $B_\delta$ for some $\delta>0$. Suppose that
\begin{align}
\max_{[0,1]}f'<1. \label{monotonie}
\end{align}
Let $u\in L^\infty(\R^N\setminus K,[0,1])$ be a solution of $Lu+f(u)=0$ a.e. in $\R^N\setminus K$. Then $u$ is uniformly continuous in $\overline{\R^N\setminus K_\delta}$, in the sense that $u$ has a representative in its class of equivalence that is uniformly continuous in $\overline{\R^N\setminus K_\delta}$. If, in addition, $J\in B_{1,\infty}^\alpha(\R^N)$ for some $\alpha\in(0,1]$, then $u\in C^{0,\alpha}(\overline{\R^N\setminus K_\delta})$ and $\big(1-\max_{[0,1]}f'\big)[u]_{C^{0,\alpha}(\overline{\R^N\setminus K_\delta})}\leq [J]_{B_{1,\infty}^\alpha(\R^N)}$.
\end{lemma}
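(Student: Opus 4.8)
The plan is to rewrite the equation $Lu+f(u)=0$ as a fixed-point identity that exhibits $u$ as a convolution-type average, and then exploit the strict contraction condition~\eqref{monotonie} to transfer regularity from the kernel $J$ to $u$. Concretely, expanding $Lu(x)=\int_{\R^N\setminus K}J(x-y)u(y)\,\D y-\mathcal{J}(x)u(x)$, the equation becomes
$$\big(\mathcal{J}(x)-f'(\xi_x)\big)u(x)=\int_{\R^N\setminus K}J(x-y)u(y)\,\D y+\big(f(u(x))-f'(\xi_x)u(x)\big)$$
for a suitable intermediate value, but it is cleaner to write, for $x\in\overline{\R^N\setminus K_\delta}$ (so that $\mathcal{J}(x)=\|J\|_1=1$ since $\supp J\subset B_\delta$ and $B_\delta(x)\subset\R^N\setminus K$),
$$u(x)=\int_{\R^N\setminus K}J(x-y)u(y)\,\D y+f(u(x)),$$
i.e. $u(x)=(J\ast u)(x)+f(u(x))$ where $u$ is extended by anything bounded, or better: the point is that on $\overline{\R^N\setminus K_\delta}$ the self-interaction coefficient is exactly $1$, which combines with $f$ to give a genuine contraction.

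First I would establish the key pointwise estimate. For $x,x'\in\overline{\R^N\setminus K_\delta}$, subtract the two identities:
$$u(x)-u(x')=\int_{\R^N\setminus K}\big(J(x-y)-J(x'-y)\big)u(y)\,\D y+f(u(x))-f(u(x')).$$
Since $f\in C^1([0,1])$ with $\max_{[0,1]}f'<1$, by the mean value theorem $|f(u(x))-f(u(x'))|\le M|u(x)-u(x')|$ where $M:=\max_{[0,1]}f'<1$ (the case $M<0$ being even easier; one handles it by noting $f(u(x))-f(u(x'))$ has the same sign issues controlled by $|M|$, so I will just use $|f(a)-f(b)|\le\max(|M|,\ldots)$, and in fact the sharp constant uses that $f'\le M$ gives $f(a)-f(b)\le M(a-b)$ for $a\ge b$ — I would be slightly careful here and use $\|u(x)-u(x')\|$ bounds symmetrically). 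Meanwhile $0\le u\le1$, so
$$\left|\int_{\R^N\setminus K}\big(J(x-y)-J(x'-y)\big)u(y)\,\D y\right|\le\int_{\R^N}|J(x-y)-J(x'-y)|\,\D y=\|J(\cdot-(x-x'))-J\|_{L^1(\R^N)}.$$
Rearranging, $(1-M)|u(x)-u(x')|\le\|J(\cdot+h)-J\|_{L^1(\R^N)}$ with $h=x-x'$. Since $1-M>0$, this already gives a uniform modulus of continuity for $u$ on $\overline{\R^N\setminus K_\delta}$: the right-hand side tends to $0$ as $|h|\to0$ by continuity of translation in $L^1$. Hence $u$ restricted to $\overline{\R^N\setminus K_\delta}$ is uniformly continuous (after modifying on a null set: the estimate holds for a.e. pair $(x,x')$, and a function satisfying an a.e. modulus-of-continuity bound has a uniformly continuous representative on the closed set — I would spell this standard fact out briefly, e.g. via a density/Lebesgue-point argument). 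When additionally $J\in B_{1,\infty}^\alpha(\R^N)$, the right-hand side is $\le[J]_{B_{1,\infty}^\alpha(\R^N)}|h|^\alpha$, yielding $(1-M)[u]_{C^{0,\alpha}(\overline{\R^N\setminus K_\delta})}\le[J]_{B_{1,\infty}^\alpha(\R^N)}$, which is exactly the claimed bound.

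The main obstacle I anticipate is purely a matter of care rather than depth: (a) justifying that $\mathcal{J}(x)=1$ exactly on $\overline{\R^N\setminus K_\delta}$ — this is where $\supp J\subset B_\delta$ is used, since then $J(x-y)=0$ whenever $y\in K$ and $x\notin K_\delta$, so the integral over $\R^N\setminus K$ equals the integral over $\R^N$, which is $1$; and (b) passing from the a.e.\ equation to a genuinely continuous representative. For (b) I would argue that the function $v(x):=\int_{\R^N\setminus K}J(x-y)u(y)\,\D y$ is continuous everywhere (continuity of translation in $L^1$ applied to $J$, with $u\in L^\infty$), hence on $\overline{\R^N\setminus K_\delta}$ the a.e.\ identity $u(x)=v(x)+f(u(x))$ shows that $u$ agrees a.e.\ with a function $w$ solving $w=v+f(w)$ pointwise; since $\mathrm{id}-f$ is a bi-Lipschitz homeomorphism of $[0,1]$ onto its image (as $f'<1$ implies $x\mapsto x-f(x)$ is strictly increasing with derivative $\ge1-M>0$), $w=(\mathrm{id}-f)^{-1}\circ v$ is continuous, and then the quantitative estimate above upgrades this to uniform continuity (resp.\ $C^{0,\alpha}$) with the stated constant. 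I would also double-check the sharp Lipschitz constant for $f$: from $f'\le M$ we get, for $a\ge b$ in $[0,1]$, $0\le$ or $\ge$ depending on sign, but in all cases $|f(a)-f(b)-M(a-b)|$ does not have a clean sign; the clean route is to note $|u(x)-u(x')|-\big(f(u(x))-f(u(x'))\big)\mathrm{sign}(u(x)-u(x'))\ge(1-M)|u(x)-u(x')|$ using $f(a)-f(b)\le M(a-b)$ for $a\ge b$, which is what actually appears. This finishes the proof.
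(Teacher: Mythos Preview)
Your approach is correct and essentially identical to the paper's: both rewrite the equation on $\overline{\R^N\setminus K_\delta}$ (where $\mathcal{J}\equiv1$) as $g(u(x))=\int_{\R^N\setminus K}J(x-y)u(y)\,\D y$ with $g(t)=t-f(t)$, observe that $g$ is a $C^1$ diffeomorphism of $[0,1]$ onto its image since $g'\ge 1-\max_{[0,1]}f'>0$, and define the continuous representative as $g^{-1}$ composed with the continuous convolution-type integral. The paper writes this as $\tilde u(x)=g^{-1}\big(g(u(y_0))+h(x,y_0)\big)$ for a fixed $y_0$, which unwinds to exactly your $w=(\mathrm{id}-f)^{-1}\circ v$.

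One caution: your first pass, bounding $|f(u(x))-f(u(x'))|\le M|u(x)-u(x')|$ with $M=\max_{[0,1]}f'$ and then ``rearranging'', is not valid as stated, since the hypothesis is only $\max f'<1$, not $\max|f'|<1$; the Lipschitz constant of $f$ could be much larger than $M$ (and $M$ could even be negative). You catch this yourself and give the correct fix at the end---working with $g(u(x))-g(u(x'))$ directly and using $g'\ge 1-M$---which is precisely what the paper does. In a clean write-up you should drop the false start and go straight to the $g$-formulation.
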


\begin{proof}
For every $x$ and $y$ in $\R^N\setminus K_{\delta}$, we have
$$\baa{rcl}
Lu(x)-Lu(y) & = & \ds\int_{\R^N\setminus K}u(z)\big(J(x-z)-J(y-z)\big)\mathrm{d}z\vspace{3pt}\\
& & \ds-u(x)\int_{\R^N\setminus K}J(x-z)\,\mathrm{d}z+u(y)\int_{\R^N\setminus K}J(y-z)\,\mathrm{d}z \vspace{3pt}\\
& = & \ds-\mathcal{J}(x)\,u(x)+\mathcal{J}(y)\,u(y)+\int_{\R^N\setminus K}u(z)\big(J(x-z)-J(y-z)\big)\mathrm{d}z.\eaa$$
Since $J$ has unit mass and is supported in $B_\delta$, we get that $\mathcal{J}(x)=\mathcal{J}(y)=1$. Therefore,
$$Lu(x)-Lu(y)+u(x)-u(y)=\int_{\R^N\setminus K}u(z)\big(J(x-z)-J(y-z)\big)\mathrm{d}z.$$\par
Now, remember that $u$ is a solution to $Lu+f(u)=0$ a.e. in $\R^N\setminus K$. In particular,
there exists a measurable negligible set $E$ such that $Lu(z)+f(u(z))=0$ (and $u(z)\in[0,1]$) for all $z\in\R^N\setminus(K\cup E)$. Hence, letting
$$g(t):=t-f(t)$$
for $t\in[0,1]$, we obtain that
\be\label{gxy}
\forall\,x,y\in\R^N\setminus(K_\delta\cup E),\ \ g(u(x))-g(u(y))\!=\!\int_{\R^N\setminus K}\!\!u(z)\big(J(x\!-\!z)\!-\!J(y\!-\!z)\big)\mathrm{d}z=:h(x,y).
\ee
Notice that, since $J\in L^1(\R^N)$ and $u\in L^{\infty}(\R^N\setminus K)$, the function $h$ defined by the right-hand side of the previous equation can actually be defined in $\R^N\times\R^N$ and it is uniformly continuous in $\R^N\times\R^N$. Furthermore, by~\eqref{monotonie}, the function $g\in C^1([0,1])$ is such that $g'>0$ in $[0,1]$. It is then a $C^1$ diffeomorphism from $[0,1]$ to $[g(0),g(1)]=[-f(0),1-f(1)]$. Let us denote $g^{-1}:[g(0),g(1)]\to[0,1]$ its reciprocal.\par
Fix $y_0\in\R^N\setminus(K_\delta\cup E)$. For every $x\in\R^N\setminus(K_\delta\cup E)$,~\eqref{gxy} yields
$$g(u(y_0))+h(x,y_0)=g(u(x))\in[g(0),g(1)].$$
Since the function $x\mapsto g(u(y_0))+h(x,y_0)$ is continuous (in the whole $\R^N$) and since $E$ is negligible, it follows that $g(u(y_0))+h(x,y_0)\in[g(0),g(1)]$ for all $x\in\R^N\setminus K_\delta$ (since any point of the open set $\R^N\setminus K_\delta$ is the limit of a sequence of points in $\R^N\setminus(K_\delta\cup E)$). Define now
$$\tilde{u}(x)=g^{-1}\big(g(u(y_0))+h(x,y_0)\big)\ \ \hbox{ for }x\in\overline{\R^N\setminus K_\delta}.$$
By~\eqref{gxy}, one has $\tilde{u}=u$ in $\R^N\setminus(K_\delta\cup E)$. Furthermore, $\tilde{u}$ is uniformly continuous in $\overline{\R^N\setminus K_\delta}$ owing to its definition, since $h$ is uniformly continuous in $\R^N\times\R^N$ and $g^{-1}$ is $C^1$ hence Lipschitz continuous in $[g(0),g(1)]$.\par
Even if it means redefining $u$ by $\tilde{u}$ in $\overline{\R^N\setminus K_\delta}$, it follows that $u$ is uniformly continuous in $\overline{\R^N\setminus K_\delta}$ and that~\eqref{gxy} holds, by continuity, for all $x,y\in\overline{\R^N\setminus K_\delta}$. In particular, since $0\le u\le 1$ in $\R^N\setminus K$, we get that
\be\label{L1control}
\forall\,x,y\in\overline{\R^N\setminus K_\delta},\quad|g(u(x))-g(u(y))|\le\|J(\cdot+x-y)-J\|_{L^1(\R^N)}.
\ee\par
\noindent Finally, if $J\in B_{1,\infty}^\alpha(\R^N)$ with $\alpha\in(0,1]$, then~\eqref{L1control} yields $g(u)\in C^{0,\alpha}(\overline{\R^N\setminus K_\delta})$ and
$$[g(u)]_{C^{0,\alpha}(\overline{\R^N\setminus K_\delta})}\leq \sup_{h\ne0}\,\frac{\|J(\cdot+h)-J\|_{L^1(\R^N)}}{|h|^\alpha}= [J]_{B_{1,\infty}^\alpha(\R^N)}.$$
Since $\max_{[g(0),g(1)]}|(g^{-1})'|\le(1-\max_{[0,1]}f')^{-1}$ and $0\le u\le 1$, one concludes that $u\in C^{0,\alpha}(\overline{\R^N\setminus K_\delta})$ and $\big(1-\max_{[0,1]}f'\big)[u]_{C^{0,\alpha}(\overline{\R^N\setminus K_\delta})}\leq [J]_{B_{1,\infty}^\alpha(\R^N)}$.
\end{proof}

We now establish a regularity result for $u$ in the whole set $\overline{\R^N\setminus K}$ for flatter nonlinearities.

\begin{lemma}\label{LEMMA:INT}
Suppose that $K\subset\R^N$ is a compact set and that
\begin{align}
\max_{[0,1]}f'<\inf_{\R^N\setminus K}\,\mathcal{J}.   \label{monotonie2}
\end{align}
Let $u\in L^\infty(\R^N\setminus K,[0,1])$ be a solution of $Lu+f(u)=0$ a.e. in $\R^N\setminus K$. Then, $u$ can be redefined up to a negligible set and extended as a uniformly continuous function in $\overline{\R^N\setminus K}$. If, in addition, $J\in B_{1,\infty}^\alpha(\R^N)$ for some $\alpha\in(0,1]$, then $u\in C^{0,\alpha}(\overline{\R^N\setminus K})$ and
$$\Big(\inf_{\R^N\setminus K}\mathcal{J}-\max_{[0,1]}f'\Big)\,[u]_{C^{0,\alpha}(\overline{\R^N\setminus K})}\leq 2\,[J]_{B_{1,\infty}^\alpha(\R^N)}.$$
\end{lemma}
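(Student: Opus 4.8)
The plan is to adapt the argument of Lemma~\ref{PROP:CONE} to the situation where $J$ need not be compactly supported, so that $\mathcal{J}$ is no longer identically $1$ on $\R^N\setminus K_\delta$ but only satisfies the uniform lower bound implicit in~\eqref{monotonie2}. As before, for $x,y\in\R^N\setminus K$ I would start from the identity
$$Lu(x)-Lu(y)=-\mathcal{J}(x)\,u(x)+\mathcal{J}(y)\,u(y)+\int_{\R^N\setminus K}u(z)\big(J(x-z)-J(y-z)\big)\mathrm{d}z,$$
and combine it with the equation $Lu+f(u)=0$ (valid off a negligible set $E$) to obtain, for $x,y\in\R^N\setminus(K\cup E)$,
$$\mathcal{J}(x)\,u(x)-f(u(x))-\big(\mathcal{J}(y)\,u(y)-f(u(y))\big)=\int_{\R^N\setminus K}u(z)\big(J(x-z)-J(y-z)\big)\mathrm{d}z.$$
The natural substitute for the function $g(t)=t-f(t)$ is now the two-variable expression $G(x,t):=\mathcal{J}(x)\,t-f(t)$, and the point is that $\partial_t G(x,t)=\mathcal{J}(x)-f'(t)\ge \inf_{\R^N\setminus K}\mathcal{J}-\max_{[0,1]}f'>0$ uniformly in $x$ and $t$, by~\eqref{monotonie2}. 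Hence for each fixed $x$ the map $t\mapsto G(x,t)$ is a strictly increasing $C^1$ bijection from $[0,1]$ onto an interval $[G(x,0),G(x,1)]$, with inverse $t\mapsto \Phi(x,t)$ that is Lipschitz in $t$ with constant $\big(\inf_{\R^N\setminus K}\mathcal{J}-\max_{[0,1]}f'\big)^{-1}$, uniformly in $x$.

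The next step is the bootstrap/extension argument, exactly as in Lemma~\ref{PROP:CONE}. Fixing $y_0\in\R^N\setminus(K\cup E)$ and writing $h(x,y_0):=\int_{\R^N\setminus K}u(z)\big(J(x-z)-J(y_0-z)\big)\mathrm{d}z$, which is well-defined and uniformly continuous on all of $\R^N$ since $J\in L^1$ and $u\in L^\infty$, the identity above reads $G(x,u(x))=G(y_0,u(y_0))+h(x,y_0)$ for a.e.\ $x$. I would then set
$$\tilde{u}(x):=\Phi\big(x,\,G(y_0,u(y_0))+h(x,y_0)\big)\quad\text{for }x\in\overline{\R^N\setminus K},$$
check that the argument of $\Phi$ stays in the admissible range $[G(x,0),G(x,1)]$ for every $x\in\R^N\setminus K$ (using that $\R^N\setminus K$ is open, $E$ is negligible, and $x\mapsto G(x,0),G(x,1)$ and $h(\cdot,y_0)$ are continuous, so the inequalities $G(x,0)\le G(y_0,u(y_0))+h(x,y_0)\le G(x,1)$ pass to the limit), and observe $\tilde u=u$ a.e. Uniform continuity of $\tilde u$ on $\overline{\R^N\setminus K}$ then follows from uniform continuity of $\mathcal{J}$ (hence of $G(x,0),G(x,1)$ and of the modulus controlling $\Phi$), of $h(\cdot,y_0)$, and the joint continuity of $\Phi$. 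Redefining $u:=\tilde u$, the identity $G(x,u(x))=G(y,u(y))+h(x,y)$ now holds for all $x,y\in\overline{\R^N\setminus K}$, giving
$$\big|\mathcal{J}(x)u(x)-f(u(x))-\mathcal{J}(y)u(y)+f(u(y))\big|\le \|J(\cdot+x-y)-J\|_{L^1(\R^N)}.$$

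For the Hölder part, when $J\in B_{1,\infty}^\alpha(\R^N)$ the right-hand side is $\le [J]_{B_{1,\infty}^\alpha}|x-y|^\alpha$. To convert this into an estimate on $[u]_{C^{0,\alpha}}$ I would write, for $x,y\in\overline{\R^N\setminus K}$,
$$\mathcal{J}(y)\big(u(x)-u(y)\big)-\big(f(u(x))-f(u(y))\big)=\big(\mathcal{J}(y)-\mathcal{J}(x)\big)u(x)+\text{(the bracketed quantity above)},$$
so that $\big|\mathcal{J}(y)(u(x)-u(y))-(f(u(x))-f(u(y)))\big|\le |\mathcal{J}(x)-\mathcal{J}(y)|+[J]_{B_{1,\infty}^\alpha}|x-y|^\alpha$, using $0\le u\le1$; then bound the left side below by $\big(\inf\mathcal{J}-\max f'\big)|u(x)-u(y)|$ via the mean value theorem applied to $t\mapsto \mathcal{J}(y)t-f(t)$. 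The remaining point is to control $|\mathcal{J}(x)-\mathcal{J}(y)|$ by a constant times $|x-y|^\alpha$; since $\mathcal{J}(x)-\mathcal{J}(y)=\int_{\R^N\setminus K}\big(J(x-z)-J(y-z)\big)\mathrm{d}z$ has absolute value $\le\|J(\cdot+x-y)-J\|_{L^1(\R^N)}\le [J]_{B_{1,\infty}^\alpha}|x-y|^\alpha$, this is immediate and yields $\big(\inf_{\R^N\setminus K}\mathcal{J}-\max_{[0,1]}f'\big)[u]_{C^{0,\alpha}(\overline{\R^N\setminus K})}\le 2[J]_{B_{1,\infty}^\alpha(\R^N)}$, as claimed.

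I expect the main obstacle to be purely bookkeeping rather than conceptual: carefully justifying that the definition of $\tilde u$ makes sense pointwise on the closed set $\overline{\R^N\setminus K}$ (the range-admissibility of the argument of $\Phi$, and the passage from the a.e.\ identity to an everywhere identity through the negligible set $E$), and tracking the $x$-dependence of the inverse function $\Phi$ so that the modulus of continuity one extracts is genuinely uniform — this is where the uniform lower bound~\eqref{monotonie2} and the uniform continuity of $\mathcal{J}$ are used. Everything else parallels Lemma~\ref{PROP:CONE} almost verbatim, with the single scalar constant $1$ replaced by the function $\mathcal{J}$ and the factor $2$ in the final estimate coming from the extra term $|\mathcal{J}(x)-\mathcal{J}(y)|$ that was absent in the compactly supported case.
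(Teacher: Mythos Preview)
Your proposal is correct and follows essentially the same route as the paper's proof: introduce $G(x,t)=\mathcal{J}(x)\,t-f(t)$, use~\eqref{monotonie2} to invert $G(x,\cdot)$, redefine $u$ via the inverse applied to $G(y_0,u(y_0))+h(\cdot,y_0)$, and then extract the modulus of continuity from the identity $G(x,u(x))-G(y,u(y))=h(x,y)$ by splitting off the $(\mathcal{J}(x)-\mathcal{J}(y))$ term and applying the mean value theorem. The only cosmetic differences are that the paper writes the inverse as $H_x$ rather than $\Phi(x,\cdot)$ and factors out $\mathcal{J}(x)$ instead of $\mathcal{J}(y)$ in the final decomposition, which is immaterial.
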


\begin{remark}\label{RK:CVX}
In the case of a compact convex obstacle $K$, then the conclusion of Lemma~\ref{LEMMA:INT} still holds if~\eqref{monotonie2} is replaced by
$$\max_{[0,1]}f'<\frac{1}{2}.$$
Indeed, $\mathcal{J}\geq1/2$ in $\R^N\setminus K$ when $K$ is convex (remember also that $J$ is always assumed to be a non-negative radially symmetric kernel with unit mass). The bound $1/2$ is somehow optimal, since $K$ can be as large as desired, still in the class of compact convex sets. However, this bound deteriorates considerably if $K$ is only starshaped, as the infimum of $\mathcal{J}$ in $\R^N\setminus K$ can become arbitrarily small. Roughly speaking, the less convex the obstacle $K$, the flatter the nonlinearity $f$ needs to be to insure~\eqref{monotonie2} and the interior continuity of the solution $u$.
\end{remark}

\begin{proof}[Proof of Lemma~$\ref{LEMMA:INT}$]
Reasoning exactly as in the proof of Lemma~\ref{PROP:CONE}, there exists a measurable negligible set $E$ such that
\be\label{gxy2}
\forall\,x,y\in\R^N\setminus(K\cup E),\ \ G(x,u(x))-G(y,u(y))=h(x,y),
\ee
where $h(x,y)$ is defined in~\eqref{gxy} (remember also that $h$ is uniformly continuous in $\R^N\times\R^N$) and
$$G(x,s)=\mathcal{J}(x)\,s-f(s)\ \hbox{ for }(x,s)\in\R^N\times[0,1].$$
By~\eqref{monotonie2} and the continuity of $\mathcal{J}$, the function $G$ is such that $\partial_sG(x,s)>0$ for all $(x,s)\in\overline{\R^N\setminus K}\times[0,1]$. For every $x\in\overline{\R^N\setminus K}$, the function $G(x,\cdot)$ is then a $C^1$ diffeomorphism from~$[0,1]$ to $[G(x,0),G(x,1)]$. Let us denote $H_x:[G(x,0),G(x,1)]\to[0,1]$ its reciprocal, that is, $H_x(G(x,t))=t$ for all $x\in\overline{\R^N\setminus K}$ and $t\in[0,1]$.\par
Fix $y_0\in\R^N\setminus(K\cup E)$. For every $x\in\R^N\setminus(K\cup E)$,~\eqref{gxy2} yields
$$G(y_0,u(y_0))+h(x,y_0)=G(x,u(x))\in[G(x,0),G(x,1)].$$
Since the function $x\mapsto G(y_0,u(y_0))+h(x,y_0)$ is continuous (in the whole space $\R^N$), since $G$ is itself continuous in $\R^N\times[0,1]$ and since $E$ is negligible, it follows that $G(y_0,u(y_0))+h(x,y_0)\in[G(x,0),G(x,1)]$ for all $x$ in the open set $\R^N\setminus K$. Define now
$$\tilde{u}(x)=H_x\big(G(y_0,u(y_0))+h(x,y_0)\big)\ \ \hbox{ for }x\in\overline{\R^N\setminus K}.$$
By~\eqref{gxy2}, one has $\tilde{u}=u$ in $\R^N\setminus(K\cup E)$. Furthermore, $\tilde{u}$ is continuous in $\overline{\R^N\setminus K}$ owing to its definition, since $h$ is continuous in $\R^N\times\R^N$ and $(x,s)\mapsto H_x(s)$ is continuous in the set $\big\{(x,s)\in\overline{\R^N\setminus K}\times\R;\,s\in[G(x,0),G(x,1)]\big\}$. Even if it means redefining $u$ by $\tilde{u}$ in $\R^N\setminus K$ and extending it by $\tilde{u}$ in $\overline{\R^N\setminus K}$, it follows that $u$ is continuous in $\overline{\R^N\setminus K}$ and that~\eqref{gxy2} holds, by continuity, for all $x$, $y$ in the open set $\R^N\setminus K$ and then in $\overline{\R^N\setminus K}$. In particular, since $0\le u\le 1$ in $\overline{\R^N\setminus K}$, we get that
\be\label{Guxy}
\forall\,x,y\in\overline{\R^N\setminus K},\quad|G(x,u(x)))-G(y,u(y))|\leq\|J(\cdot+x-y)-J\|_{L^1(\R^N)}.
\ee\par
\noindent Finally, define
$$\beta:=\inf_{\R^N\setminus K}\mathcal{J}-\max_{[0,1]}f'>0,$$
the positivity of $\beta$ resulting from~\eqref{monotonie2}. {F}rom~\eqref{Guxy} together with the definition of $G$ and the inequalities $0\le u\le 1$ in $\overline{\R^N\setminus K}$, one infers that, for all $x,y\in\overline{\R^N\setminus K}$,
$$\baa{rcl}
\big|\mathcal{J}(x)\,(u(x)\!-\!u(y))-(f(u(x))\!-\!f(u(y)))\big| & \!\!\!\le\!\!\! & \|J(\cdot\!+\!x\!-\!y)-J\|_{L^1(\R^N)}+\big|u(y)\,(\mathcal{J}(x)\!-\!\mathcal{J}(y))\big|\vspace{3pt}\\
& \!\!\!\le\!\!\! & 2\,\|J(\cdot+x-y)-J\|_{L^1(\R^N)}.\eaa$$
It follows from the mean value theorem and the above definition of $\beta$ that
$$\beta\,|u(x)-u(y)|\le2\,\|J(\cdot+x-y)-J\|_{L^1(\R^N)}$$
for all $x,y\in\overline{\R^N\setminus K}$. In particular, the function $u$ is uniformly continuous in $\overline{\R^N\setminus K}$. Furthermore, if $J\in B^\alpha_{1,\infty}(\R^N)$ for some $\alpha\in(0,1]$, then $u\in C^{0,\alpha}(\overline{\R^N\setminus K})$ and $\beta\,[u]_{C^{0,\alpha}(\overline{\R^N\setminus K})}\leq 2\,[J]_{B_{1,\infty}^\alpha(\R^N)}$. The proof of Lemma~\ref{LEMMA:INT} is thereby complete.
\end{proof}


\SE{Comparison principles}\label{4}

In this section, we collect some comparison principles that fit for our purposes. Throughout this section, $K$ is any compact subset of $\R^N$, $f$ is any $C^1(\R)$ function, and $J$ is any $L^1(\R^N)$ non-negative and radially symmetric kernel with unit mass.

We start with a weak maximum principle.

\begin{lemma}[Weak maximum principle]\label{WEAK}
Assume that
\begin{equation}\label{well+2}
{\mbox{$f'\le -c_1$ in $[1-c_0,+\infty),\ $ for some $c_0>0$, $c_1>0$.}}
\end{equation}
Let~$H\subset\R^N$ be an open affine half-space such that $K\subset H^c=\R^N\setminus H$. Let $u,v\in L^{\infty}(\R^N\setminus K)$ be such that
\begin{equation}\label{H:CO}
u, \;v\in C\big( \overline{H} \big)
\end{equation}
and
\begin{equation}\label{H:EQ}
\left\{\baa{ll}
Lu+f(u)\leq 0 & {\mbox{ in }}\overline{H},\vspace{3pt}\\
Lv+f(v)\ge 0 & {\mbox{ in }}\overline{H}.\eaa\right.
\end{equation}
Assume also that
\begin{equation}\label{up:c0}
u\ge 1-c_0 \quad{\mbox{ in }}\overline{H},
\end{equation}
that
\begin{equation}\label{H:lim}
\limsup_{|x|\to+\infty} \big(v(x)-u(x)\big)\le 0
\end{equation}
and that
\begin{equation}\label{H:SI}
v\le u\quad{\mbox{ a.e. in }} H^c\setminus K.
\end{equation}
Then,~$v\le u$ a.e. in~$\R^N\setminus K$.
\end{lemma}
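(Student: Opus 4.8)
The plan is to argue by contradiction via a sliding-type comparison, exploiting the strong negativity of $f'$ near the value $1$. Suppose that $v\le u$ fails somewhere in $\R^N\setminus K$, i.e. $\sup_{\R^N\setminus K}(v-u)>0$. By~\eqref{H:SI}, any point where $v>u$ must lie in the open half-space $H$, and by~\eqref{H:lim} the positive part of $v-u$ decays at infinity. Since $u,v\in C(\overline H)$ and $v-u\le 0$ on $\partial H\cup\{|x| \text{ large}\}$, a standard argument shows the supremum $M:=\sup_{\overline H}(v-u)>0$ is attained at some point $x_0\in H$ (using that $v-u$ is bounded, continuous on $\overline H$, vanishes on the boundary, and has non-positive $\limsup$ at infinity — so a maximizing sequence stays in a compact subset of $H$).

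Next I would evaluate the equations at $x_0$ and subtract. Write $w:=v-u$. From~\eqref{H:EQ},
$$
Lv(x_0)-Lu(x_0)+f(v(x_0))-f(u(x_0))\ge 0.
$$
The key observation is the sign of $Lv(x_0)-Lu(x_0)=\int_{\R^N\setminus K}J(x_0-y)\big(w(y)-w(x_0)\big)\,\D y - \big(\text{no extra term}\big)$; more precisely, splitting the operator, $Lw(x_0)=\int_{\R^N\setminus K}J(x_0-y)\,w(y)\,\D y - \mathcal{J}(x_0)\,w(x_0)\le \mathcal J(x_0)M - \mathcal J(x_0)M = 0$ since $w(y)\le M=w(x_0)$ for all $y\in\R^N\setminus K$. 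Hence $Lv(x_0)-Lu(x_0)\le 0$, so the subtracted inequality forces
$$
f(v(x_0))-f(u(x_0))\ge 0.
$$
On the other hand, by~\eqref{up:c0} we have $u(x_0)\ge 1-c_0$, hence $v(x_0)=u(x_0)+M> u(x_0)\ge 1-c_0$, so both $u(x_0)$ and $v(x_0)$ lie in $[1-c_0,+\infty)$, where~\eqref{well+2} gives $f'\le -c_1<0$; by the mean value theorem $f(v(x_0))-f(u(x_0))\le -c_1 M<0$, a contradiction. Therefore $M\le 0$, i.e. $v\le u$ in $\overline H$, and combined with~\eqref{H:SI} this gives $v\le u$ a.e. in $\R^N\setminus K$.

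The main obstacle is the first step: ensuring that the supremum of $w=v-u$ over $\overline H$ is \emph{attained} at an interior point of $H$. The continuity hypothesis~\eqref{H:CO} only gives continuity on $\overline H$, and $H$ is unbounded, so one cannot merely invoke compactness; one must combine~\eqref{H:lim} (to confine a maximizing sequence to a bounded region) with continuity up to $\partial H$ and the fact that $w\le 0$ on $\partial H$ (by~\eqref{H:SI}, since $\partial H\subset H^c\setminus K$ up to a negligible set — here one should be slightly careful that~\eqref{H:SI} holds a.e., but continuity of $w$ on $\overline H$ and density let us conclude $w\le 0$ everywhere on $\partial H$). Once the maximum point $x_0\in H$ is secured, the rest is the short algebraic argument above; a minor point worth spelling out is that the nonlocal term $Lw(x_0)$ is well-defined and $\le 0$ precisely because $w$ is bounded on $\R^N\setminus K$ and $J\in L^1$, with the pointwise bound $w\le M$ holding a.e. on all of $\R^N\setminus K$ (not just on $\overline H$), which again uses~\eqref{H:SI} on $H^c\setminus K$.
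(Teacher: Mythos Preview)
Your proof is correct and follows essentially the same argument as the paper's: assume by contradiction that $M:=\sup_{\overline{H}}(v-u)>0$, use~\eqref{H:lim} to show a maximizing sequence in $\overline{H}$ is bounded and hence (by~\eqref{H:CO}) converges to some $\bar{x}\in\overline{H}$ with $w(\bar{x})=M$, then combine $Lw(\bar{x})\le 0$ (since $w\le M$ a.e.\ in $\R^N\setminus K$ by~\eqref{H:SI}) with the mean value theorem and~\eqref{well+2} to reach a contradiction. One minor remark: your insistence that $\bar{x}$ lies in the \emph{open} set $H$ is unnecessary and your density argument for $w\le 0$ on $\partial H$ does not quite work (continuity of $w$ is only from the $\overline{H}$ side, not from $H^c$); the paper simply takes $\bar{x}\in\overline{H}$, which suffices since~\eqref{H:EQ} holds on all of $\overline{H}$.
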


\begin{proof} We let~$w:=v-u$. We want to prove that $w\le0$ a.e. in $\R^N\setminus K$. {F}rom~\eqref{H:SI}, we only have to show that $w\leq 0$ in $\overline{H}$ (remember that from~\eqref{H:CO} both functions $u$ and $v$ are assumed to be continuous in $\overline{H}$). We argue by contradiction and we suppose that $\sup_{\overline{H}}w>0$. Then, thanks to~\eqref{H:SI}, one has $\sup_{\R^N\setminus K} w=\sup_{\overline{H}} w>0$ and
there exists a sequence~$(x_j)_{j\in\N}$ in $\overline{H}$ such that
$$ \lim_{j\to+\infty} w(x_j)=\sup_{\R^N\setminus K} w>0.$$
It follows then from~\eqref{H:lim} that the sequence $(x_j)_{j\in\N}$ is bounded. Thus, up to extraction of a subsequence,
there exists a point $\bar x\in \overline{H}$ such that $x_j\to\bar{x}$ as $j\to+\infty$, hence $w(\bar{x})=\lim_{j\to+\infty}w(x_j)>0$ by~\eqref{H:CO}. As a consequence,~\eqref{H:EQ} yields
\begin{equation}\label{INS:H}
L w(\bar x)=Lv(\bar x) -Lu(\bar x)\ge-f\big(v(\bar x)\big)+f\big(u(\bar x)\big)=-w(\bar x)\int_0^1 f'\big( tv(\bar x)+(1-t)u(\bar x) \big)\,\D t.
\end{equation}
Moreover, combining~\eqref{up:c0} and $w(\bar{x})>0$, we obtain that $v(\bar x)=w(\bar x)+ u(\bar x)>u(\bar x)\ge 1-c_0$, and so $ tv(\bar x)+(1-t)u(\bar x) \ge1-c_0$ for all~$t\in [0,1]$. {F}rom this and~\eqref{well+2}, we conclude that $ f'\big( tv(\bar x)+(1-t)u(\bar x) \big)\le-c_1<0$ for all~$t\in [0,1]$. This inequality, together with $w(\bar{x})>0$, yields
$$-w(\bar x)\int_0^1 f'\big( tv(\bar x)+(1-t)u(\bar x) \big)\,\D t>0.$$
By inserting this information into~\eqref{INS:H}, we get $ L w(\bar x)>0$. That is, recalling~\eqref{DEF:L} and the nonnegativity of $J$,
$$0<Lw(\bar x)=\int_{\R^N\setminus K} J(\bar{x}-y)\,\big( w(y)-w(\bar x) \big)\,\D y=\int_{\R^N\setminus K} J(\bar{x}-y)\,\bigg( w(y)-\sup_{\R^N\setminus K} w \bigg)\,\D y \le 0.$$
This is a contradiction, and so the desired result is established.
\end{proof}

The next lemma is concerned with a strong maximum principle.

\begin{lemma}[Strong maximum principle]\label{STRONG:LE23}
Assume that $J$ satisfies~\eqref{C2}, with $0\le r_1<r_2$. Let $H\subset\R^N$ be an open affine half-space such that $K\subset H^c$. Let $u,v\in L^{\infty}(\R^N\setminus K)$ satisfy~\eqref{H:CO} and~\eqref{H:EQ}. Assume also that
\begin{equation}\label{3beta:2}
v\le u \quad{\mbox{ a.e. in }} \R^N\setminus K
\end{equation}
and that there exists~$\bar x\in\overline{H}$ such that $v(\bar x)=u(\bar x)$. Then,
$$v= u \quad\mbox{ a.e. in }(H+B_{r_2})\setminus K.$$
\end{lemma}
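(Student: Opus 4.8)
The plan is to deduce the strong maximum principle from the weak-type contradiction argument already used in Lemma~\ref{WEAK}, but applied pointwise at the contact point rather than at a supremum. Set $w:=u-v\ge0$ a.e.\ in $\R^N\setminus K$ (this is~\eqref{3beta:2}); by~\eqref{H:CO} both $u$ and $v$, hence $w$, are continuous on $\overline H$. The hypothesis gives a point $\bar x\in\overline H$ with $w(\bar x)=0$, i.e.\ $w$ attains its minimum value $0$ over $\R^N\setminus K$ at $\bar x$. First I would evaluate the equation at $\bar x$: subtracting the two inequalities in~\eqref{H:EQ} and using the mean value theorem on $f$ as in~\eqref{INS:H} gives
$$Lw(\bar x)=Lu(\bar x)-Lv(\bar x)\le f(v(\bar x))-f(u(\bar x))=-w(\bar x)\int_0^1 f'\big(tu(\bar x)+(1-t)v(\bar x)\big)\,\D t=0,$$
since $w(\bar x)=0$. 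On the other hand, by definition of $L$ and nonnegativity of $J$,
$$Lw(\bar x)=\int_{\R^N\setminus K}J(\bar x-y)\,\big(w(y)-w(\bar x)\big)\,\D y=\int_{\R^N\setminus K}J(\bar x-y)\,w(y)\,\D y\ge0.$$
Hence $\int_{\R^N\setminus K}J(\bar x-y)\,w(y)\,\D y=0$, and since $J\ge0$ and $w\ge0$ a.e., the integrand vanishes a.e.: $w(y)=0$ for a.e.\ $y\in\R^N\setminus K$ with $J(\bar x-y)>0$.

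Next I would translate the condition $J(\bar x-y)>0$ using~\eqref{C2}. By~\eqref{C2}, $J(z)>0$ for a.e.\ $z$ with $r_1<|z|<r_2$, so the set of such $y$ contains (up to a null set) the annulus $\mathcal A(\bar x,r_1,r_2)$. Thus $w=0$, equivalently $v=u$, a.e.\ on $\mathcal A(\bar x,r_1,r_2)\setminus K$. To upgrade this to $v=u$ a.e.\ on all of $(H+B_{r_2})\setminus K$ I would propagate the contact set by a chaining/connectedness argument: for any $x_0\in H+B_{r_2}$ I want to produce a point $\bar x_1\in\overline H$ with $w(\bar x_1)=0$ and $|\bar x_1-x_0|<r_2$, then rerun the above to conclude $w=0$ a.e.\ near $x_0$. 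Concretely, pick any point $z_0$ in the (nonempty, since $r_1<r_2$) annulus $\mathcal A(\bar x,r_1,r_2)$ lying in $\overline H$ and outside $K$ where the a.e.\ identity $w=0$ holds; approximating by continuity of $w$ on $\overline H$ one gets $w=0$ at such points of $\overline H$, giving new contact points, and one moves a controlled step of size less than $r_2$ each time while staying in $\overline H$. Because $\overline H$ is convex (it is a closed half-space) and $H+B_{r_2}$ is "thickened" by exactly $r_2$, any target point is reachable from $\bar x$ by finitely many such steps without leaving $\overline H\setminus K$ — here one uses that $K\subset H^c$ so the relevant balls $B_{r_2}(\bar x_k)$ meet $\R^N\setminus K$ in the portion lying in $H+B_{r_2}$.

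The main obstacle I anticipate is the bookkeeping in this propagation step, precisely because $L$ involves only the restricted domain $\R^N\setminus K$ and the kernel condition~\eqref{C2} is only an a.e.\ positivity on an annulus (not a full ball), so one cannot simply invoke a classical "overlapping balls" argument; care is needed to (a) pass from the a.e.\ statement "$w=0$ a.e.\ on $\mathcal A(\bar x,r_1,r_2)\setminus K$" to genuine contact points of the continuous function $w|_{\overline H}$, which requires that $\mathcal A(\bar x,r_1,r_2)\cap H$ has positive measure and that $w$ is continuous there, and (b) ensure the chain of centers stays inside $\overline H$ (hence avoids $K$) while covering $H+B_{r_2}$. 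I would handle (a) by noting $\bar x\in\overline H$ and $H$ open forces $\mathcal A(\bar x,r_1,r_2)\cap H$ to have positive measure (an annulus always has positive-measure intersection with an open half-space whose boundary passes through or near its center), so a full-measure subset of it contains points of $\overline H$ arbitrarily close to any prescribed direction; continuity of $w$ then forces $w=0$ at those points. For (b), since $H+B_{r_2}=\{x:\operatorname{dist}(x,\overline H)<r_2\}\cup\overline H$ and $K\subset H^c$, I would first prove $v=u$ a.e.\ on $\overline H\setminus K$ by chaining contact points \emph{within} the convex set $\overline H$, then note that any $x_0$ with $\operatorname{dist}(x_0,\overline H)<r_2$ admits a contact point $\bar x_1\in\overline H$ with $|x_0-\bar x_1|<r_2$, giving $J(x_0-\bar x_1)$-positivity is not what we need — rather we use the already-established interior identity together with one more application of the integral identity at a suitable point of $\overline H$ near $x_0$. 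A clean alternative, which I would adopt if the geometry gets fussy, is to define $Z:=\{x\in\overline H: v(x)=u(x)\}$, show it is nonempty, closed in $\overline H$, and relatively open (using the annulus argument: a point of $Z$ forces $w=0$ a.e.\ on a neighborhood-annulus, hence by continuity on a genuine neighborhood in $\overline H$), conclude $Z=\overline H$ by connectedness of $\overline H$, and finally extend from $\overline H$ to $(H+B_{r_2})\setminus K$ via the a.e.\ vanishing of $w$ on $\mathcal A(x,r_1,r_2)$ for every $x\in\overline H\setminus K$, whose union over such $x$ covers $(H+B_{r_2})\setminus K$ up to a null set.
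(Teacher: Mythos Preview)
Your approach is correct and matches the paper's: evaluate the subtracted inequalities at the contact point $\bar x$ to force $\int_{\R^N\setminus K}J(\bar x-y)\,w(y)\,\D y=0$, deduce $w=0$ a.e.\ on $\mathcal A(\bar x,r_1,r_2)\setminus K$, use continuity on $\overline H$ to produce new contact points, propagate to all of $\overline H$, and then cover $(H+B_{r_2})\setminus K$ by the annuli $\mathcal A(x,r_1,r_2)$ with $x\in\overline H$. The one step the paper spells out more carefully than your ``clean alternative'' is the annulus-to-ball conversion needed to show $Z$ is relatively open when $r_1>0$: from $w=0$ on $\overline{\mathcal A(\bar x,r_1,r_2)}\cap\overline H$ one reapplies the integral identity at every point of that set to obtain $w=0$ on $\overline{B_{r_2-r_1}(\bar x)}\cap\overline H$, and then iterates in steps of size $r_2-r_1$; continuity alone does not turn a single annulus into a neighborhood of its center.
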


\begin{proof} We let~$w:=v-u$. Notice that $w(\bar x)=0$. As a consequence, using~\eqref{H:EQ}, we can write
$$ L w(\bar x)= Lv(\bar x) -Lu(\bar x)\ge-f(v(\bar x))+f(u(\bar x))=0. $$
On the other hand, $w(y)\le0=w(\bar x)$ for a.e. $y\in\R^N\setminus K$, thanks to~\eqref{3beta:2}, and therefore
$$Lw(\bar x)=\int_{\R^N\setminus K} J(\bar{x}-y)\,\big( w(y)-w(\bar x) \big)\,\D y\le0.$$
Hence, $Lw(\bar{x})=0$ and
$$0=\int_{\R^N\setminus K}J(\bar{x}-y)\,\big(w(y)-w(\bar{x})\big)\,\D y=\int_{\R^N\setminus K}J(\bar{x}-y)\,w(y)\,\D y.$$
{F}rom our assumptions, we have $w\le0$ a.e. in $\R^N\setminus K$. Accordingly, since $J$ is such that $J>0$ a.e. in the annulus $\mathcal{A}(r_1,r_2)$ from the general assumption~\eqref{C2}, it follows that
$$w(x)=0,\hbox{ i.e. }v(x)=u(x),\ \hbox{ for a.e. }x\in\mathcal{A}(\bar x,r_1,r_2)\cap\R^N\setminus K.$$
In particular, since $u$ and $v$ are continuous in $\overline{H}$ and $H\subset\R^N\setminus K$, we get that
$$v(x)=u(x)\ \hbox{ for all }x\in\overline{\mathcal{A}(\bar x,r_1,r_2)}\cap\overline{H}=:\Omega_1(\bar{x}).$$
Applying the same arguments as above to the new set of contact points $\Omega_1(\bar{x})$ we obtain that $v(x)=u(x)$ for all $x\in\overline{\mathcal{A}(x_1,r_1,r_2)}\cap\overline{H}$ and for all $x_1\in\Omega_1(\bar{x})$. As a consequence, $v(x)=u(x)$ for all $x\in\overline{B_{\mu}(\bar{x})}\cap\overline{H}$ with $\mu:=r_2-r_1$. Iterating this procedure over again implies that $v(x)=u(x)$ for each $x$ in $\overline{B_{2\mu}(\bar{x})}\cap\overline{H}$ and so on in $\overline{B_{k\mu}(\bar{x})}\cap\overline{H}$ for any $k\in\N$. Hence, $v=u$ in~$\overline{H}$.\par
Therefore, as in the beginning of the proof, it follows that $Lw(x)=0$ for all $x\in\overline{H}$ and
$$v=u\ \hbox{ a.e. in }\big(\overline{H}+\mathcal{A}(r_1,r_2)\big)\cap\big(\R^N\setminus K\big)=(H+B_{r_2})\setminus K.$$
The proof of Lemma~\ref{STRONG:LE23} is thereby complete.
\end{proof}

Finally, we derive a sweeping-type result in the spirit of Serrin's sweeping
theorem~\cite{Serrin} (see also~\cite{NABB},
and page~29 in~\cite{Pucci} for a very clear explanation of the method).

\begin{lemma}[Sweeping principle]\label{TH:SWEEP}
Assume that $J$ satisfies~\eqref{C2}, with $0\leq r_1<r_2$. Let $g:\R\to\R$ be a continuous function, let $a,b,s_1,s_2,s_3,s_4$ be some real numbers such that $a\le b$ and $r_2\le s_1\le s_2<s_3\le s_4$. Let $u\in C(\overline{\mathcal{A}(s_1,s_4)})$ satisfy
\begin{equation}\label{eq:u}
\int_{\mathcal{A}(s_1,s_4)}J(x-y)\,u(y)\,\D y -u(x)+g\big(u(x)\big)\le 0\ {\mbox{ for all }}x\in\overline{\mathcal{A}(s_1,s_4)},
\end{equation}
and
\begin{equation}\label{eq:u1}
\int_{\mathcal{A}(s_1,s_4)}J(x-y)\,u(y)\,\D y -u(x)+g\big(u(x)\big)< 0\ \hbox{ for all }x\in\mathcal{A}(s_2,s_3).
\end{equation}
Let $(w_\tau)_{\tau\in[a,b]}$ be a continuous family in $C(\overline{\mathcal{A}(s_1,s_4)})$ such that
\begin{equation}\label{eq:tau}
\int_{\mathcal{A}(s_1,s_4)}J(x-y)\,w_{\tau}(y)\,\D y -w_\tau(x)+g\big(w_\tau(x)\big)\ge 0\ {\mbox{ for all }}x\in\overline{\mathcal{A}(s_1,s_4)}.
\end{equation}
Assume further that there exists $\tau_0\in[a,b]$ such that $w_{\tau_0}\le u$ in $\overline{\mathcal{A}(s_1,s_4)}$. Then $w_\tau\le u$ in~$\overline{\mathcal{A}(s_1,s_4)}$ for every $\tau\in[a,b]$.
\end{lemma}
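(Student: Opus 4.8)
The plan is to run the classical continuity (or "sliding") argument. Define $I:=\{\tau\in[a,b]:\ w_\tau\le u\ \text{in }\overline{\mathcal{A}(s_1,s_4)}\}$. By hypothesis $\tau_0\in I$, so $I\neq\varnothing$. Since $[a,b]$ is connected, it suffices to show that $I$ is both closed and open in $[a,b]$; then $I=[a,b]$, which is the claim. Closedness is immediate: if $\tau_n\to\tau$ with $\tau_n\in I$, then since $(w_\tau)_{\tau}$ is a continuous family in $C(\overline{\mathcal{A}(s_1,s_4)})$ (hence $w_{\tau_n}\to w_\tau$ uniformly on the compact annulus) and $u$ is fixed, passing to the limit in $w_{\tau_n}\le u$ gives $w_\tau\le u$, so $\tau\in I$.

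The heart of the matter is openness. Fix $\bar\tau\in I$; I want a neighborhood of $\bar\tau$ contained in $I$. The key claim is a strong-comparison statement: if $w_{\bar\tau}\le u$ on $\overline{\mathcal{A}(s_1,s_4)}$, then in fact the inequality is \emph{strict}, $w_{\bar\tau}<u$, on the closed sub-annulus $\overline{\mathcal{A}(s_2,s_3)}$ — except that one must also handle the degenerate possibility that they touch. So first suppose $w_{\bar\tau}(\bar x)=u(\bar x)$ at some $\bar x\in\overline{\mathcal{A}(s_2,s_3)}$. Setting $\varphi:=w_{\bar\tau}-u\le 0$ with $\varphi(\bar x)=0$, subtract \eqref{eq:tau} (for $\tau=\bar\tau$) from \eqref{eq:u} and use continuity of $g$ to get, at $x=\bar x$,
$$\int_{\mathcal{A}(s_1,s_4)}J(\bar x-y)\,\varphi(y)\,\D y\ \ge\ g\big(u(\bar x)\big)-g\big(w_{\bar\tau}(\bar x)\big)=0,$$
while the integrand is $\le 0$ a.e.; hence $\varphi\equiv 0$ on $\mathcal{A}(\bar x,r_1,r_2)\cap\mathcal{A}(s_1,s_4)$ since $J>0$ a.e. on that annulus by \eqref{C2}, and by continuity on the closure intersected with $\overline{\mathcal{A}(s_1,s_4)}$. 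One then propagates this contact set exactly as in the proof of Lemma \ref{STRONG:LE23} (iterating over annuli of width $\mu=r_2-r_1$, using $r_2\le s_1$ so the relevant balls stay inside where the integral equation holds) to reach a point $x_\ast\in\mathcal{A}(s_2,s_3)$ with $w_{\bar\tau}(x_\ast)=u(x_\ast)$; but plugging $x=x_\ast$ into \eqref{eq:u1} and \eqref{eq:tau} gives the contradiction $0>\int J(x_\ast-y)\varphi(y)\,\D y\ge 0$. Therefore $w_{\bar\tau}<u$ on the compact set $\overline{\mathcal{A}(s_2,s_3)}$, so there is $\delta>0$ with $w_{\bar\tau}\le u-\delta$ there.

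Now I use this uniform gap to perturb $\tau$. By uniform continuity of $\tau\mapsto w_\tau$ into $C(\overline{\mathcal{A}(s_1,s_4)})$, choose $\eta>0$ so that $\|w_\tau-w_{\bar\tau}\|_{C(\overline{\mathcal{A}(s_1,s_4)})}<\delta$ whenever $|\tau-\bar\tau|<\eta$; then for such $\tau$ we still have $w_\tau< u$ on $\overline{\mathcal{A}(s_2,s_3)}$, in particular $w_\tau\le u$ there. It remains to rule out a touching or crossing in the \emph{outer} region $\overline{\mathcal{A}(s_1,s_4)}\setminus\mathcal{A}(s_2,s_3)=\overline{\mathcal{A}(s_1,s_2)}\cup\overline{\mathcal{A}(s_3,s_4)}$. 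Suppose $\sup_{\overline{\mathcal{A}(s_1,s_4)}}(w_\tau-u)>0$; since it is $<0$ on $\overline{\mathcal{A}(s_2,s_3)}$ and $\le 0$ is not yet known elsewhere, the positive supremum is attained at some $\bar x$ lying in the outer region, with $\varphi:=w_\tau-u$ satisfying $\varphi(\bar x)=\max_{\overline{\mathcal{A}(s_1,s_4)}}\varphi>0$. Subtracting \eqref{eq:u} from \eqref{eq:tau} at $x=\bar x$, together with the elementary bound $g(w_\tau(\bar x))-g(u(\bar x))$ which one must control — here is where a touch-up is needed: one does \emph{not} a priori have a sign for $g(w_\tau(\bar x))-g(u(\bar x))$, so instead argue directly that
$$0\ \le\ \int_{\mathcal{A}(s_1,s_4)}J(\bar x-y)\big(\varphi(y)-\varphi(\bar x)\big)\,\D y+\big(g(w_\tau(\bar x))-g(u(\bar x))\big),$$
and combine with $\varphi(y)\le\varphi(\bar x)$; if the monotone structure $g(t)-t$ decreasing (which in our applications follows from the flatness assumption on $f$, giving $g'=-f'+(\text{mass term})>0$, i.e. $t\mapsto t-g(t)$ is increasing hence $g(w_\tau(\bar x))-g(u(\bar x))\le w_\tau(\bar x)-u(\bar x)=\varphi(\bar x)$ fails the wrong way) — the clean route is to replace the naive estimate by: at an interior max of $\varphi$ the nonlocal term $\int J(\bar x-y)(\varphi(y)-\varphi(\bar x))\,\D y\le 0$, so the displayed inequality forces $g(w_\tau(\bar x))-g(u(\bar x))\ge 0$; since \eqref{eq:u} is only non-strict here one cannot immediately close, so one instead notes that equality throughout forces $\varphi\equiv\varphi(\bar x)$ on $\mathcal{A}(\bar x,r_1,r_2)$, and propagates the maximum as before into $\mathcal{A}(s_2,s_3)$, contradicting $\varphi<0$ there. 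Hence $\varphi\le 0$ on all of $\overline{\mathcal{A}(s_1,s_4)}$, i.e. $\tau\in I$, proving $I$ open.

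The main obstacle is precisely this last region: the strict inequality \eqref{eq:u1} is only available on the \emph{middle} annulus, so one must combine the quantitative gap $\delta$ obtained there with a strong-maximum-principle propagation (à la Lemma \ref{STRONG:LE23}) to transport any hypothetical outer contact point into the middle annulus where \eqref{eq:u1} delivers a contradiction; carefully checking that the propagation stays within $\overline{\mathcal{A}(s_1,s_4)}$ (which is why the radii are ordered $r_2\le s_1\le s_2<s_3\le s_4$) is the delicate bookkeeping.
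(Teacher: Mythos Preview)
Your overall open--closed connectedness scheme matches the paper's, and your closedness step is fine. The gap is in the openness step, specifically in how you handle the ``outer region'' $\overline{\mathcal{A}(s_1,s_4)}\setminus\mathcal{A}(s_2,s_3)$ for the \emph{perturbed} parameter $\tau$. At a positive maximum $\bar x$ of $\varphi:=w_\tau-u$, subtracting \eqref{eq:u} from \eqref{eq:tau} gives
\[
\int_{\mathcal{A}(s_1,s_4)}\!J(\bar x-y)\big(\varphi(y)-\varphi(\bar x)\big)\,\D y\;+\;\varphi(\bar x)\!\left(\int_{\mathcal{A}(s_1,s_4)}\!J(\bar x-y)\,\D y-1\right)+\;g\big(w_\tau(\bar x)\big)-g\big(u(\bar x)\big)\ \ge\ 0.
\]
The first two terms are $\le 0$, so indeed $g(w_\tau(\bar x))-g(u(\bar x))\ge 0$. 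But that inequality may well be \emph{strict} (the lemma only assumes $g$ continuous, with no monotonicity), and then the nonlocal integral can be strictly negative; you cannot force it to vanish and hence cannot propagate the level set $\{\varphi=\varphi(\bar x)\}$ into $\mathcal{A}(s_2,s_3)$. Your own parenthetical (``here is where a touch-up is needed \ldots fails the wrong way'') already flags the difficulty, and the proposed workaround does not close it.

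The paper avoids this problem by a simpler route: work with the \emph{unperturbed} $w_{\bar\tau}$ and show that any contact point $z\in\overline{\mathcal{A}(s_1,s_4)}$ (not only in $\overline{\mathcal{A}(s_2,s_3)}$) forces $w_{\bar\tau}\equiv u$ on the whole annulus. Indeed, at a contact point $\varphi(z)=0$, the $g$--terms cancel exactly, so $\int_{\mathcal{A}(s_1,s_4)}J(z-y)\varphi(y)\,\D y=0$ with $\varphi\le 0$, giving $\varphi=0$ on $\overline{\mathcal{A}(z,r_1,r_2)}\cap\overline{\mathcal{A}(s_1,s_4)}$; iteration (using $r_2\le s_1$) then covers all of $\overline{\mathcal{A}(s_1,s_4)}$, which contradicts \eqref{eq:u1}. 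Hence $w_{\bar\tau}<u$ strictly on the \emph{entire} compact set $\overline{\mathcal{A}(s_1,s_4)}$, and a single application of the uniform continuity of $\tau\mapsto w_\tau$ yields $w_\tau\le u$ on all of $\overline{\mathcal{A}(s_1,s_4)}$ for $\tau$ near $\bar\tau$. No outer--region analysis is needed.
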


\begin{proof}
Let us define $\Sigma \subset [a,b]$ to be the following set:
$$\Sigma:=\Big\{\tau \in [a,b];\ w_\tau\le u\hbox{ in }\overline{\mathcal{A}(s_1,s_4)} \Big\}.$$
To prove the theorem, we will show that $\Sigma$ is a non-empty open and closed set relatively to~$[a,b]$. It will then follow that $\Sigma=[a,b]$ and the theorem will be proved. First of all, by definition,  $\Sigma$ is a closed subset of $[a,b]$ and $\tau_0 \in \Sigma$. To finish our proof, it  remains to show that~$\Sigma$ is an open set relatively to $[a,b]$. So let us  pick $\tau \in \Sigma$. We have $w_\tau\le u$ in $\overline{\mathcal{A}(s_1,s_4)}$. By continuity of $u$ and $w_\tau$ in the compact set $\overline{\mathcal{A}(s_1,s_4)}$, either $\max_{\overline{\mathcal{A}(s_1,s_4)}}(w_\tau-u)<0$ or there exists $z\in \overline{\mathcal{A}(s_1,s_4)}$ such that  $w_\tau(z)=u(z)$. In the latter case, using $w_\tau\le u$ in $\overline{\mathcal{A}(s_1,s_4)}$ together with~\eqref{eq:u} and~\eqref{eq:tau} at the point~$z$, we get that
$$0\le  \int_{\mathcal{A}(s_1,s_4)}J(z-y)\big(u(y)-w_{\tau}(y)\big)\,\D y\le 0.$$
Using the continuity of both $u$ and $w_\tau$ and the fact that $J>0$ a.e. in $\mathcal{A}(r_1,r_2)$ for some $0\le r_1<r_2$ by~\eqref{C2}, it follows that $w_\tau=u$ in $\overline{\mathcal{A}(z,r_1,r_2)}\cap\overline{\mathcal{A}(s_1,s_4)}$ (which is nonempty since $r_2\leq s_1$) and then $w_\tau=u$ in $\overline{\mathcal{A}(z',r_1,r_2)}\cap\overline{\mathcal{A}(s_1,s_4)}$ for all $z'\in\overline{\mathcal{A}(z,r_1,r_2)}\cap\overline{\mathcal{A}(s_1,s_4)}$. In particular, it is easy to see that
there exists $r>0$ such that $w_\tau=u$ in $\overline{B_r(z)}\cap\overline{\mathcal{A}(s_1,s_4)}$. As a consequence, the non-empty set $\big\{x\in\overline{\mathcal{A}(s_1,s_4)};\ w_\tau(x)=u(x)\big\}$ is both (obviously) closed and open relatively to the (connected) set $\overline{\mathcal{A}(s_1,s_4)}$ and it is thus equal to $\overline{\mathcal{A}(s_1,s_4)}$. In other words, $w_\tau=u$ in $\overline{\mathcal{A}(s_1,s_4)}$, hence
$$\int_{\mathcal{A}(s_1,s_4)}J(x-y)\,u(y)\,\D y -u(x)+g\big(u(x)\big)=0  \quad{\mbox{ for all }}x\in\overline{\mathcal{A}(s_1,s_4)},$$
contradicting~\eqref{eq:u1} in $\mathcal{A}(s_2,s_3)$. Therefore, we must have $\max_{\overline{\mathcal{A}(s_1,s_4)}}(w_\tau-u)<0$. Since $w_\tau$ is continuous with respect to $\tau$ in the uniform norm,
there exists~$\delta>0$ such that $w_{\tau'}\le u$ in $\overline{\mathcal{A}(s_1,s_4)}$ for all $\tau'\in (\tau-\delta,\tau+\delta)\cap[a,b]$. Hence, $(\tau-\delta,\tau+\delta)\cap[a,b]\subset \Sigma$, which shows that $\Sigma$ is open relatively to $[a,b]$.
\end{proof}

\begin{remark} The previous arguments immediately show that, when $r_1=0$ in~\eqref{C2},
the sweeping principle holds in any compact connected set $F$. Namely, if $J$ satisfies~\eqref{C2} with $r_1=0$, if $u\in C(F)$ satisfies~\eqref{eq:u} with $F$ instead of $\overline{\mathcal{A}(s_1,s_4)}$ and the strict inequality somewhere in $F$, if $(w_\tau)_{\tau\in[a,b]}$ is a continuous family in $C(F)$ satisfying~\eqref{eq:tau} with $F$ instead of $\overline{\mathcal{A}(s_1,s_4)}$ and if $w_{\tau_0}\le u$ in $F$ for some $\tau_0\in[a,b]$, then $w_\tau\le u$ in $F$ for every $\tau\in[a,b]$.
\end{remark}


\SE{Construction of radially symmetric lower bounds}\label{5}

In this section, we derive a first lower bound on continuous non-negative super-solutions $u$ of~\eqref{LIM:u}  that we constantly use along this paper. Throughout this section, $K$ is any compact subset of $\R^N$, $f$ is any $C^1(\R)$ function, and $J$ is any $L^1(\R^N)$ non-negative and radially symmetric kernel with unit mass. We recall that $J_1$ is the non-negative even $L^1(\R)$ kernel with unit mass defined for a.e. $y_1\in\R$ by
$$ J_1(y_1):=\int_{\R^{N-1}} J(y_1,y_2,\cdots,y_N)\,\D y_2\,\cdots\,\D y_N,$$
and that assumption~\eqref{C3} means the existence of a continuous increasing function $\phi:\R\to(0,1)$ such that
\be\label{LI:PHI}\left\{\baa{l}
\displaystyle\int_{\R}J_1(\tau -\sigma)\,\big(\phi(\sigma)-\phi(\tau) \big)\,\D \sigma +f\big(\phi(\tau)\big) \ge 0\; \text{ for all }\; \tau \in \R,\vspace{3pt}\\
\phi(-\infty)= 0,\ \ \phi(+\infty)=1.\eaa\right.
\ee

Then, for such $\phi$, we establish the following lemma:

\begin{lemma}\label{sub-solution}
Assume that $f$ and $J$ satisfy~\eqref{well+2} and~\eqref{C3}, let $\gamma\in(0,1]$ and let $u\in C\big( \overline{ \R^N\setminus K },\;[\gamma,1]\big)$ be a function satisfying~\eqref{LIM:u}. Then, there exists  $r_0>0$ such that
$$ \phi(|x|-r_0)\leq u(x)\ \hbox{ for all }x\in\overline{\R^N\setminus K}.$$
\end{lemma}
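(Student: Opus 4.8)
The idea is to use the one-dimensional traveling-wave-like subsolution $\phi$ to build a radially symmetric subsolution of $L(\cdot)+f(\cdot)=0$ in the exterior of a large ball, and then push it up against $u$ using the sweeping principle (Lemma~\ref{TH:SWEEP}). First I would fix $R>0$ large enough that $K\subset B_R$ and, using the limiting condition $u(x)\to1$ in~\eqref{LIM:u} together with~\eqref{well+2}, also large enough that $u\ge 1-c_0$ on $\overline{\R^N\setminus B_R}$; this puts us in the regime where $f'\le -c_1<0$, which is where the comparison machinery bites. I would then define, for $r_0>0$ to be chosen, the radial competitor $v_{r_0}(x):=\phi(|x|-r_0)$ on $\overline{\R^N\setminus K}$.

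The main computation is to check that $v_{r_0}$ is a subsolution of the nonlocal equation restricted to the exterior region, i.e.\ that $Lv_{r_0}+f(v_{r_0})\ge 0$ there, or at least the version of this needed to apply the sweeping lemma on annuli $\overline{\mathcal{A}(s_1,s_4)}$. The point is that the one-dimensional inequality in~\eqref{LI:PHI} involves the marginal kernel $J_1$ obtained by integrating $J$ over the $N-1$ transverse variables, so one has to compare $\int_{\R^N}J(x-y)(\phi(|y|-r_0)-\phi(|x|-r_0))\,\D y$ with $\int_\R J_1(|x|-\sigma-r_0\cdot 0)(\cdots)$-type integrals. The geometric fact that makes this work is convexity of the sphere: for $x$ far from the origin, $|y|\ge |x|+(y-x)\cdot x/|x|$ is false in general, but since $\phi$ is increasing and the kernel is radially symmetric, one uses that the level sets $\{|y|=\text{const}\}$ are "more spread out" than hyperplanes, so projecting onto the radial direction only helps. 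Concretely I would write $e=x/|x|$, slice $\R^N$ by hyperplanes $\{(y-x)\cdot e=t\}$, and show $\int_{\{(y-x)\cdot e=t\}}J(x-y)\,\D\mathcal{H}^{N-1}(y)\le J_1(-t)$ with the transverse cross-sections of $\{|y|-r_0\le |x|-r_0+s\}$ containing the corresponding slab cross-sections once $r_0$ (equivalently $|x|$) is large — this is exactly where convexity of $K$ / the ball enters and where the restriction to $\R^N\setminus K$ versus $\R^N$ must be handled (the missing mass over $K$ only makes $Lv_{r_0}$ larger since $\phi(|y|-r_0)-\phi(|x|-r_0)$ can be controlled). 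I expect this slicing-plus-monotonicity estimate, and the bookkeeping of the domain of integration, to be the principal obstacle.

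Once $v_{r_0}$ is known to be a (weak) subsolution on the relevant annuli, the endgame is soft. Since $\phi(-\infty)=0$, for any fixed annulus near $\partial B_R$ one can choose $r_0$ huge so that $v_{r_0}\le\gamma\le u$ there, giving the initial ordering needed to start the sweeping. Then I would let $r_0$ decrease and apply Lemma~\ref{TH:SWEEP} (with $g=f$ suitably, using the strict inequality coming from~\eqref{well+2} and $\phi<1$ on an intermediate sub-annulus, plus the fact that $u$ is a supersolution by~\eqref{LIM:u}) to conclude $v_{r_0}\le u$ on each annulus, and hence on all of $\overline{\R^N\setminus B_R}$, for a value of $r_0$ that no longer depends on the annulus but only on $R$, $c_0$, $\phi$ and $J$. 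On the bounded region $\overline{B_R\setminus K}$ one just enlarges $r_0$ further if necessary so that $\phi(|x|-r_0)\le\gamma\le u(x)$ there using continuity of $\phi$ and compactness; combining the two gives $\phi(|x|-r_0)\le u(x)$ on all of $\overline{\R^N\setminus K}$, which is the claim. A small technical wrinkle to keep track of is that Lemma~\ref{TH:SWEEP} is stated for the operator with integration over an annulus rather than over $\R^N\setminus K$, so I would first reduce the nonlocal equation on the exterior to that annular form by absorbing the tails (which are monotone in $r_0$) into the inequality, exactly as in the cited proofs.
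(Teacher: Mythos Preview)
Your plan takes a different route from the paper and has a real gap at the sweeping step.

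The paper never works with the radial competitor $v_{r_0}(x)=\phi(|x|-r_0)$ as a subsolution. Instead, for each unit vector $e$ it uses the \emph{planar} function $\phi_{r,e}(x):=\phi(x\cdot e-r)$. This is an exact subsolution of $L_{\R^N}+f$ because after a rotation the integral collapses to $J_1*\phi$, with no geometric inequality needed. One then picks $r_0=R_0+A$ (with $\phi\le\gamma$ on $(-\infty,-A]$) so that $\phi_{r_0,e}\le\gamma\le u$ on $H_e^c\setminus K$, and checks that removing $K$ from the integration only helps since $\phi_{r_0,e}(y)\le\phi_{r_0,e}(x)$ for $y\in K$, $x\in\overline{H_e}$. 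The comparison invoked is the \emph{weak maximum principle} (Lemma~\ref{WEAK}) on the half-space $H_e=\{x\cdot e>R_0\}$, not the sweeping principle. Finally, since $\phi_{r_0,e}\le u$ holds for every $e$, taking $e=x/|x|$ yields $\phi(|x|-r_0)\le u(x)$. The radial lower bound is thus the envelope of planar ones.

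Your route breaks at Lemma~\ref{TH:SWEEP}. That lemma requires each $w_\tau$ to satisfy $\int_{\mathcal{A}(s_1,s_4)}J(\cdot-y)w_\tau(y)\,\D y - w_\tau + g(w_\tau)\ge0$, with integration over the annulus only. Your slicing argument (essentially $|y|\ge y\cdot e$, hence $\phi(|y|-r_0)\ge\phi(y\cdot e-r_0)$, hence $v_{r_0}$ is a subsolution on $\R^N$) is correct, but passing from $\R^N$ to $\mathcal{A}(s_1,s_4)$ means \emph{subtracting} $\int_{\R^N\setminus\mathcal{A}}J(x-y)v_{r_0}(y)\,\D y\ge0$ from the left side, and the inequality goes the wrong way. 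There is no slack in~\eqref{LI:PHI} to absorb this, and your ``absorbing the tails (which are monotone in $r_0$)'' does not help: monotonicity in $r_0$ is irrelevant to the sign. When the paper does use Lemma~\ref{TH:SWEEP} elsewhere, the subsolutions are compactly supported inside the annulus, which is precisely what makes the restriction harmless; your $v_{r_0}$ is not.

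A radial argument can be salvaged, but via Lemma~\ref{WEAK} rather than Lemma~\ref{TH:SWEEP}: nothing in the proof of Lemma~\ref{WEAK} uses that $H$ is a half-space, so one may take $H=\R^N\setminus\overline{B_{R_0}}$, use $v_{r_0}\le\gamma\le u$ on $\overline{B_{R_0}\setminus K}$, and observe that $Lv_{r_0}+f(v_{r_0})\ge0$ on $\overline{H}$ since for $y\in K\subset B_{R_0}$ one has $|y|<|x|$ and thus $v_{r_0}(y)<v_{r_0}(x)$. That is effectively the paper's argument with the envelope taken first rather than last.
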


\begin{proof}
Since $u(x)\to1$ as $|x|\to+\infty$, there exists~$R_0>0$ so large that ~$K\subset B_{R_0}$ and~$u\ge 1-c_0$ in~$\R^N\setminus B_{R_0}$, where $c_0>0$ is given in~\eqref{well+2}. By~\eqref{LI:PHI},
there exists~$A>0$ such that $\phi\le\gamma$ in $(-\infty,-A]$. Define
$$r_0=R_0+A>0$$
and let us check that the conclusion of Lemma~\ref{sub-solution} holds with this real number $r_0$.\par
Let $e$ be any unit vector of $\R^N$, that is, $e\in \partial B_1=\mathbb{S}^{N-1}$. For~$r\in\R$, let $\phi_{r,e}$ be the function defined by
$$ \phi_{r,e}(x):= \phi(e\cdot x-r)\ \hbox{ for }x\in\R^N,$$
where $e\cdot x$ stands for the standard inner product in~$\R^N$. Let~$(e_1,\cdots,e_N)$ be the canonical basis of~$\R^N$ and let~${\mathcal{R}}$ be a rotation such that~$e={\mathcal{R}}e_1$. Set now~$\tilde e_i:={\mathcal{R}} e_i$ for all~$i\in\{2,\cdots,N\}$ and, for $x$, $y\in\R^N$ and $r\in\R$, let us define $x^*=x-re$, $y^*=y-re$ and
$$\left\{\baa{lclclcl}
X & = & (X_1,\cdots,X_N) & = & (x^*\cdot e, x^*\cdot\tilde e_2,\cdots,x^*\cdot \tilde e_N) & = & {\mathcal{R}}^{-1} y^*,\vspace{3pt}\\
Y & = & (Y_1,\cdots,Y_N) & = & (y^*\cdot e, y^*\cdot\tilde e_2,\cdots,y^*\cdot \tilde e_N) & = & {\mathcal{R}}^{-1} y^*.\eaa\right.$$
Since $J$ is rotationally invariant, we deduce from~\eqref{LI:PHI} that, for all $x\in\R^N$ and $r\in\R$,
\be\label{EQ:phi}\baa{rcl}
\displaystyle L_{\R^N}\phi_{r,e}(x):=\int_{\R^N}\!\!\!J(x\!-\!y)\,\big( \phi_{r,e}(y)\!-\!\phi_{r,e}(x) \big)\,\D y
& \!\!=\!\! & \displaystyle\int_{\R}\! J_1(X_1\!-\!Y_1)\,\big( \phi(Y_1)\!-\!\phi(X_1) \big)\,\D Y_1\vspace{3pt}\\
& \!\!\geq\!\! & -f\big(\phi(X_1)\big)\vspace{3pt}\\
& \!\!=\!\! & -f\big(\phi(x\cdot e-r)\big)= -f\big(\phi_{r,e}(x)\big).\eaa
\ee
Set~$H_e:= \{ x\in\R^N;\ x\cdot e> R_0\}$ (notice that $\overline{H_e}\cap K=\emptyset$). We remark that, if~$r\ge r_0$ and~$x\in H_e^c\setminus K$, then
$$\phi_{r,e}(x)=\phi(x\cdot e-r) \le\phi(R_0-r_0)=\phi(-A)\le\gamma\le u(x).$$
Furthermore, if~$r\ge r_0$, $y\in K$ and $x\in\overline{H_e}$, then~$y\cdot e-r\le |y|-r\le R_0-r$ and
$$\phi_{r,e}(x)=\phi(x\cdot e-r)\ge \phi(R_0-r)\ge\phi(y\cdot e-r) =\phi_{r,e}(y).$$
Accordingly, by~\eqref{EQ:phi} and the definition of $H_e$, for any $r\ge r_0$ and~$x\in\overline{H_e}$,
\be\label{eq15}
L \phi_{r,e}(x)=L_{\R^N} \phi_{r,e}(x)-\int_{K} J(x-y)\,\big( \phi_{r,e}(y)-\phi_{r,e}(x) \big)\,\D y\ge -f\big(\phi_{r,e}(x)\big).
\ee
Consequently, we can exploit the weak comparison principle of Lemma~\ref{WEAK} (used here with~$H=H_e\subset\R^N\setminus K$ and~$v=\phi_{r_0,e}$) and deduce that
$$ \phi(x\cdot e-r_0)=\phi_{r_0,e}(x)\le u(x)$$
for every~$x\in\R^N\setminus K$ and also for every $x\in\overline{\R^N\setminus K}$ by continuity. This inequality holds for every $e\in\partial B_1$, while $r_0>0$ does not depend on $e$. In particular, taking into account the possible choice of~$e=x/|x|$ if $x\neq0$ (and any $e\in\partial B_1$ if $x=0$), we conclude that
$$ \phi(|x|-r_0) \le u(x) \quad{\mbox{ for all }}x\in\overline{\R^N\setminus K}.$$
This proves Lemma~\ref{sub-solution}.
\end{proof}

\begin{remark}\label{remgamma}
If $\R^N\setminus K$ is connected, if $f(0)\ge0$ and if $J$ satisfies~\eqref{C2} with $r_1=0$ (for instance, if $J$ is continuous at the origin with $J(0)>0$), then Lemma~\ref{sub-solution} holds with $\gamma=0$. Indeed, these additional assumptions imply that $\inf_{\R^N\setminus K}u>0$. If not, then by continuity of $u$ and the limiting conditions in~\eqref{LIM:u},
there exists~$x_0\in\overline{\R^N\setminus K}$ such that $u(x_0)=0$. Thus, by~\eqref{LIM:u} and $f(0)\ge0$,
$$0\ge Lu(x_0)=\int_{\R^N\setminus K}J(x_0-y)\,\big(u(y)-u(x_0)\big)\,\D y$$
and $u(y)=u(x_0)=0$ for all $y\in\overline{B_{r_2}(x_0)}\cap\overline{\R^N\setminus K}$. Therefore, $u(y)=0$ for all $y\in\overline{\R^N\setminus K}$ by repeating this argument and by connectedness of $\R^N\setminus K$. This contradicts the limit $u(y)\to1$ as $|y|\to+\infty$. Finally, $\inf_{\R^N\setminus K}u>0$ and the conclusion of Lemma~\ref{sub-solution} holds.
\end{remark}


\SE{Construction of solutions in large balls}\label{6}

We recall that $B_R(x)$ denotes the open Euclidean ball of $\R^N$ centered at $x\in \R^N$ and of radius $R>0$, and that $B_R=B_R(0)$. Throughout this section we suppose that $f$ and $J$ satisfy~\eqref{C1},~\eqref{C2} and~\eqref{C4}. Here, for any $R>0$ large enough and any $x_0\in \R^N$, we will construct and study the properties of positive continuous solutions of the following auxiliary problem
\begin{equation}\label{eq:BR}
\mathcal{L}_{B_R(x_0)}[v](x) -v(x)+f(v(x))=0 \quad \text{ for }x\in \overline{B_R(x_0)},
\end{equation}
where
\be\label{defL}
\ds{\mathcal{L}_{B_R(x_0)}[v](x):=\int_{B_R(x_0)}J(x-y)\,v(y)\,\mathrm{d}y}.
\ee
Besides the own interest of~\eqref{eq:BR}, the properties of some particular solutions $v$ of~\eqref{eq:BR} are essential in the proof of Theorem~\ref{TH:LIOUVILLE2}, as they will provide  key estimates  ensuring to derive the  asymptotic behaviour of the solutions $u$ of~\eqref{LIM:cu9} or~\eqref{LIM:u9}. So in Sections~\ref{sec61} and~\ref{sec62}, our main concern will be to establish, for any $x_0\in \R^N$ and $R>0$ large enough, the existence of a positive maximal solution $v_{x_0,R}$ to~\eqref{eq:BR}, such that $v_{x_0,R}\to 1$ locally uniformly in $\R^N$ as $R\to+\infty$. Based on the construction of these solutions in closed balls $\overline{B_R(x_0)}$, we will next show in Section~\ref{sec63} the existence of continuous and compactly supported sub-solutions in $\R^N$.


\subsection{Existence of a positive solution in $\overline{B_R(x_0)}$}\label{sec61}

This section is devoted to the proof of the existence of a positive continuous solution of~\eqref{eq:BR} in $\overline{B_R(x_0)}$, for any $R>0$ large enough and any $x_0 \in\R^N$.

\begin{lemma}\label{existence-BR}
Assume that $f$ and $J$ satisfy~\eqref{C1},~\eqref{C2} and~\eqref{C4}. Assume further that $J$ is compactly supported and $J\in L^2(\R^N)$. Then there exists $d_0=d_0(f,J)>0$ such that for every $x_0\in\R^N$ and $R\ge d_0$, problem~\eqref{eq:BR} admits a positive continuous solution $v:\overline{B_R(x_0)}\to(0,1)$ such that $\max_{\overline{B_R(x_0)}}v>\theta$, where $\theta\in(0,1)$ is defined in~\eqref{C4}.
\end{lemma}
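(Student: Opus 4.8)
The natural strategy is a variational one: realize solutions of the rescaled equation~\eqref{eq:BR} as critical points of an energy functional on $\overline{B_R(x_0)}$, and use the mountain-pass theorem to produce a nontrivial one, then argue that it lies strictly between $0$ and $1$ and exceeds $\theta$ somewhere once $R$ is large. Since~\eqref{eq:BR} reads $\mathcal{L}_{B_R(x_0)}[v]-v+f(v)=0$ and $J$ has unit mass, I would introduce the convolution-type Dirichlet form
\[
\mathcal{E}_{R}(v):=\frac14\int_{B_R(x_0)}\int_{B_R(x_0)}J(x-y)\,\big(v(x)-v(y)\big)^2\,\D x\,\D y+\frac12\int_{B_R(x_0)}\mathcal{K}(x)\,v(x)^2\,\D x-\int_{B_R(x_0)}F(v(x))\,\D x,
\]
where $F'=f$ with $F(0)=0$ and $\mathcal{K}(x):=1-\int_{B_R(x_0)}J(x-y)\,\D y\ge0$ accounts for the ``missing mass'' near $\partial B_R(x_0)$; a short computation shows that critical points of $\mathcal{E}_R$ on $L^2(B_R(x_0))$ are exactly the solutions of~\eqref{eq:BR}. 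The hypotheses $J\in L^2(\R^N)$ with compact support guarantee that the bilinear part is a bounded, nonnegative quadratic form on $L^2$ and, crucially, that minimizing/Palais--Smale sequences for the nonlinear problem are compact in $L^2$ via the Riesz--Fréchet--Kolmogorov criterion (translations of $v$ are controlled by translations of $J$), so the Palais--Smale condition holds.

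The geometry of $\mathcal{E}_R$ is of bistable type: since $f'(0)<0$, the origin is a strict local minimum of $\mathcal{E}_R$ (the quadratic form $\frac14\langle(-\mathcal{L}+\mathcal{K})v,v\rangle$ dominates $-\int F(v)$ near $0$ because $F(s)\le \tfrac12(f'(0)+o(1))s^2<0$ for small $s$), giving the mountain-pass ``valley''. For the ``mountain far away'', I would test with the constant function $v\equiv 1$ on a large ball: because $\int_0^1 f>0$, one has $F(1)>0$, while the quadratic contribution of the constant is only the boundary term $\frac12\int \mathcal{K}$, which is $O(R^{N-1})$ (supported in a shell of width $\approx\mathrm{diam}(\mathrm{supp}\,J)$ near $\partial B_R(x_0)$), whereas $-\int F(1)=-|B_R(x_0)|F(1)$ is of order $-R^N$; hence $\mathcal{E}_R(1)<0=\mathcal{E}_R(0)$ for $R\ge d_0$ with $d_0$ depending only on $f,J$. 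Then the mountain-pass theorem yields a critical point $v=v_{x_0,R}$ at the positive mountain-pass level, so $v\not\equiv 0$ and $v\not\equiv 1$; note $d_0$ is translation-independent since the shell volume and $|B_R|$ do not depend on $x_0$.

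It remains to pin down the range of $v$. I would first replace $f$ by a modification $\tilde f$ that agrees with $f$ on $[0,1]$ and is, say, linear with negative slope outside, so that $F$ has its only relevant wells at $0$ and $1$; a comparison/truncation argument (or the observation that $(v-1)^+$ and $v^-$ are admissible competitors, using $\tilde f(s)\le 0$ for $s\le 0$ and $\tilde f(s)\le \tilde f'(1)(s-1)$ for $s\ge 1$ together with the maximum principle Lemma~\ref{WEAK}-type reasoning applied on the ball) forces $0\le v\le 1$; the strong maximum principle argument in the spirit of Lemma~\ref{STRONG:LE23}, using that $J>0$ a.e.\ on an annulus by~\eqref{C2} and that $B_R(x_0)$ is connected, then upgrades this to $0<v<1$ on $\overline{B_R(x_0)}$ (if $v$ vanished at an interior point it would vanish on a neighborhood and hence everywhere, contradicting $v\not\equiv0$; similarly $v<1$). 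Finally, $\max_{\overline{B_R(x_0)}}v>\theta$: if instead $v\le\theta$ everywhere, then since $F(s)\le0$ on $[0,\theta]$ (as $f<0$ there and $F(0)=0$) and the quadratic form is nonnegative, one gets $\mathcal{E}_R(v)\ge0$, contradicting that $v$ sits at a strictly positive mountain-pass level above the value $\mathcal{E}_R(0)=0$ — wait, this needs the level to be positive, which it is by the mountain-pass geometry; alternatively, and more robustly, integrating the equation against $v$ over a small ball where $v$ would be tiny leads to a linearized comparison with the principal eigenvalue of $-\mathcal{L}_{B_\rho}$ which tends to $0$ as $\rho\to\infty$ while $f'(0)<0$ is fixed, forcing $v$ to escape the interval $(0,\theta)$ once $R$ is large. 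The main obstacle I anticipate is precisely this last dichotomy — ensuring the mountain-pass solution is genuinely ``large'' (exceeds $\theta$) rather than a small solution trapped in the first well — so I would be prepared to carry out the eigenvalue/energy comparison carefully, and possibly choose $d_0$ larger to absorb it.
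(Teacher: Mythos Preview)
Your variational setup is exactly the paper's, but the paper proceeds by \emph{direct minimization} of $\mathcal{E}_R$ rather than mountain pass, and this choice is not cosmetic: it is what makes the last two steps work.

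On compactness: your appeal to Riesz--Fr\'echet--Kolmogorov is not justified. A bound on the nonlocal Dirichlet form $\int\!\!\int J(x-y)(v(x)-v(y))^2$ with $J\in L^1\cap L^2$ compactly supported does \emph{not} give $L^2$--equicontinuity of translates of $v$; the paper explicitly flags this lack of compactness. Instead, the paper rewrites $\mathcal{E}_R(v)=-\tfrac12\int\!\!\int J\,v(x)v(y)+\int G(v)$ with $G(s)=\tfrac{s^2}{2}-F(s)$, observes that $G$ is convex (because $f'<1$ by~\eqref{C4}), and uses weak lower semicontinuity of $\int G(v)$ together with the fact that the bilinear term passes to the weak limit (here $J\in L^2$ is used: the map $v\mapsto\int_{B_R}J(\cdot-y)v(y)\,\D y$ is compact on $L^2(B_R)$). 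This yields a global minimizer $v$ with $\mathcal{E}_R(v)\le\mathcal{E}_R(\mathds{1}_{B_R})<0$ for $R\ge d_0$, by your own boundary-shell vs.\ bulk calculation.

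On $\max v>\theta$: this is where your approach actually breaks. A mountain-pass critical value is \emph{positive}, so the observation ``if $v\le\theta$ then $F(v)\le0$ hence $\mathcal{E}_R(v)\ge0$'' yields no contradiction --- you noticed this yourself and then retreated to a vague eigenvalue heuristic. By contrast, for the paper's minimizer one has $\mathcal{E}_R(v)<0$, and then $v\le\theta$ would force $\mathcal{E}_R(v)\ge0$, an immediate contradiction. This single inequality simultaneously gives nontriviality ($v\not\equiv0$) and $\max v>\theta$, with no further argument needed.

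If you insist on mountain pass, you would need an independent proof that no nontrivial solution can satisfy $0<v\le\theta$ (e.g.\ a maximum-principle argument at $\bar x=\arg\max v$: the equation forces $f(v(\bar x))\ge0$, hence $v(\bar x)=\theta$, then propagate $v\equiv\theta$ into the interior and derive a contradiction at boundary-layer points where $\int_{B_R}J(x-y)\,\D y<1$). That works but is extra labor; the minimization route gets it for free.
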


\begin{proof}
Let $x_0\in \R^N$ be fixed, let also $R_J>0$ be fixed (independently of $x_0$) such that
$$\mathrm{supp}(J)\subset B_{R_J},$$
and pick any $R>R_J$. To construct a solution, we adapt the strategy used in~\cite{Clement} for the construction of a solution of a local reaction-diffusion equation. The proof is divided into three main steps.
\vskip 0.3cm

\noindent{\it{Step 1: definition and elementary properties of an energy functional~$\e$}}
\vskip 0.3cm
\noindent{}In the proof of Lemma~\ref{existence-BR}, let us extend $f$ by $f'(1)(s-1)$ for $s\ge 1$ and by  $-f(-s)$ for $s\le 0$ and denote $\tilde f$ this extension. Now, define
$$F(t):=\int_{0}^t\tilde f(s)\,\mathrm{d}s\ \hbox{ for }t\in\R,\ \ \ \ c(x):=1-\int_{B_R(x_0)}J(x-y)\,\mathrm{d}y\in[0,1]\ \hbox{ for }x\in\R^N,$$
and consider the following energy functional
\be \label{XEnergy}
\e(u)\!:=\!\frac{1}{4}\!\int_{\!B_R(x_0)}\!\int_{\!B_R(x_0)}\!\!\!\!J(x-y)\big(u(y)-u(x)\big)^{2}\mathrm{d}x\mathrm{d}y+\frac{1}{2}\!\int_{\!B_R(x_0)}\!\!\!\!c(x)u^2(x)\mathrm{d}x-\!\int_{\!B_R(x_0)}\!\!\!\!F(u(x))\,\mathrm{d}x
\ee
defined for $u\in L^2(B_R(x_0))$. Since $J\in L^1(\R^N)$, $\e$ is well defined in $L^2(B_R(x_0))$. Moreover, using the definition of $F$ and the oddness of $\tilde f$, we have
\begin{equation}\label{bcvh-eq-F}
\int_{B_R(x_0)} F(u(x))\,\mathrm{d}x = \int_{B_R(x_0)} F(|u(x)|)\,\mathrm{d}x
\end{equation}
for any $u\in L^2(B_R(x_0))$, while elementary computations yield
\be\label{bchv-def-energy1}
\e(u)=-\frac{1}{2}\int_{B_R(x_0)}\int_{B_R(x_0)}\!\!J(x\!-\!y)u(x)u(y)\,\mathrm{d}x\mathrm{d}y+\!\frac{1}{2}\int_{B_R(x_0)}\!\!u^{2}(x)\,\mathrm{d}x-\!\int_{B_R(x_0)}\!\!F(u(x))\,\mathrm{d}x.
\ee
{F}rom the last two formulas, one infers that, for any $u\in L^2(B_R(x_0))$,
$$\begin{array}{rcl}
\e(|u|) & \!\!\!=\!\!\! & \displaystyle-\frac{1}{2}\int_{B_R(x_0)}\!\int_{B_R(x_0)}\!\!\!J(x\!-\!y)|u(y)||u(x)|\,\mathrm{d}x\mathrm{d}y+\!\frac{1}{2}\!\int_{B_R(x_0)}\!\!\!|u(x)|^2\,\mathrm{d}x-\!\!\int_{B_R(x_0)}\!\!\!F(|u(x)|)\,\mathrm{d}x\vspace{3pt}\\
& \!\!\!\le\!\!\! & \displaystyle\e(u).
\end{array}$$

To complete Step 1, let us check that the functional $\e$ is bounded from below in $L^2(B_R(x_0))$. {F}rom~\eqref{C4} and~\eqref{bcvh-eq-F}, the definition of $F$ and $\tilde f$, and since  $\tilde f(s)\le 0$ for $s\ge 1$, we see that, for any $u\in L^2(B_R(x_0))$,
$$\int_{B_R(x_0)}F(u(x))\,\mathrm{d}x =\int_{B_R(x_0)}F(|u(x)|)\,\mathrm{d}x\le\int_{B_R(x_0)}\int_{0}^{\min\{1,|u(x)|\}}\tilde f(s)\,\mathrm{d}s \le R^N|B_1|\int_{0}^1f(s)\,\mathrm{d}s,$$
 where $|B_1|$ denotes the Lebesgue measure of the unit ball. Setting $C_0:=|B_1|\int_{0}^1f(s)\,ds>0$, we thus get that
\begin{equation}\label{bchv-eq-ener2}
\e(u)\!\ge\!\frac{1}{4}\!\int_{B_R(x_0)}\!\int_{B_R(x_0)}\!\!\!\!J(x-y)(u(y)-u(x))^{2}\,\mathrm{d}x\mathrm{d}y+\frac{1}{2}\!\int_{B_R(x_0)}\!\!\!c(x)u^2(x)\mathrm{d}x-C_0 R^N\!\ge\!-C_0 R^N
\end{equation}
for any $u\in L^2(B_R(x_0))$. Hence, the quantity
\be\label{defgamma}
\gamma:=\inf_{u\in L^2(B_R(x_0))} \e(u)
\ee
is a well defined real number.
\vskip 0.3cm

\noindent{\it{Step 2:  the infimum of $\e$ in $L^2(B_R(x_0))$ is achieved}}
\vskip 0.3cm
\noindent{}We shall now see that $\gamma$ is achieved for some $v\in L^2(B_R(x_0))$. So let $(u_n)_{n\in \N}$ be a minimising sequence. {F}rom the inequality $\e(|u|)\le\e(u)$, we may assume without loss of generality that the functions $u_n$ are all non-negative. Let us first check that the sequence $(u_n)_{n\in \N}$ is bounded in $L^2(B_R(x_0))$. To do so, we recall the definition~\eqref{defL} of $\mathcal{L}_{B_R(x_0)}$ and we notice that the principal eigenvalue  $\lambda_p$ of the operator $\mathcal{L}_{B_R(x_0)} - \mathrm{Id}$ is negative (see for example  \cite{Andreu,BCV2,Coville2010,GR} for a precise definition of $\lambda_p$ and some of its properties) and satisfies
$$-\lambda_p=\inf_{\|\varphi\|_{L^2(B_R(x_0))}=1} \left(\frac{1}{2} \int_{B_R(x_0)}\!\int_{B_R(x_0)}\!\!J(x-y)\big(\varphi(y)-\varphi(x)\big)^{2}\,\mathrm{d}x\mathrm{d}y+\int_{B_R(x_0)}\!\!c(x)\varphi^2(x)\,\mathrm{d}x\right).$$
As a consequence, from~\eqref{bchv-eq-ener2}, we get
$$\e(u_n)\ge -\frac{\lambda_p}{2}\int_{B_R(x_0)}u_n^2(x)\, \mathrm{d}x -C_0R^{N}$$
for all $n\in\N$. Therefore the sequence $(u_n)_{n\in \N}$ is bounded in $L^{2}(B_R(x_0))$ since it is a minimising sequence and since $\lambda_p<0$. Up to extraction of a subsequence, the sequence $(u_n)_{n\in \N}$ converges weakly in $L^2(B_R(x_0))$ to a non-negative function $v \in L^2(B_R(x_0))$.

We actually claim that
\begin{equation}\label{bchv-cla-enerv}
\e(v)=\gamma.
\end{equation}
Due to the lack of compactness in this non-local minimisation problem, we cannot expect to get a strong convergence in $L^2(B_R(x_0))$ for the minimising subsequence and therefore passing to the limit in the energy~\eqref{XEnergy} is not immediate. To overcome this difficulty, let us observe that by introducing the function
$$G(t):= \int_{0}^{t}\big(s -\tilde f(s)\big)\,ds=\frac{t^2}{2}-F(t),$$
we get from~\eqref{bchv-def-energy1} that, for any $n\in\N$,
$$\e(u_n)=-\frac{1}{2}\int_{B_R(x_0)}\int_{B_R(x_0)}J(x-y)\hspace{0.1em}u_n(x)\hspace{0.1em}u_n(y)\hspace{0.1em}\mathrm{d}x\mathrm{d}y +\int_{B_R(x_0)}G(u_n(x))\, \mathrm{d}x$$
and therefore
\begin{equation}\label{bchv-eq-lim2}\begin{array}{rcl}
\e(u_n) -\e(v) & = & \displaystyle-\frac{1}{2}\int_{B_R(x_0)}\int_{B_R(x_0)}J(x-y)\big[u_n(x)u_n(y)-v(x)v(y)\big]\,\mathrm{d}x\mathrm{d}y\vspace{3pt}\\
& & +\displaystyle\int_{B_R(x_0)}\big[G(u_n(x)) -G(v(x))\big]\,\mathrm{d}x.\end{array}
\end{equation}
We observe that the double integral ${\int_{B_R(x_0)}\int_{B_R(x_0)}J(x-y)u_n(x)u_n(y)\,\mathrm{d}x\mathrm{d}y}$ can be rewritten as
$$\begin{array}{rcl}
\displaystyle\int_{B_R(x_0)}\int_{B_R(x_0)}\!\!\!J(x\!-\!y)\hspace{0.1em}u_n(x)\hspace{0.07em}u_n(y)\mathrm{d}x\mathrm{d}y & \!\!\!\!=\!\!\!\! & \displaystyle\int_{B_R(x_0)}\!\!u_n(x)\left(\int_{B_R(x_0)}\!\!J(x\!-\!y)\big[u_n(y)\!-\!v(y)\big]\,\mathrm{d}y\right)\mathrm{d}x\vspace{3pt}\\
& & \displaystyle+ \int_{B_R(x_0)}\!v(y)\left(\int_{B_R(x_0)}\!J(x-y)u_n(x)\,\mathrm{d}x\right)\mathrm{d}y.
\end{array}$$
Using Lebesgue's dominated convergence theorem, together with the assumption $J\in L^2(\R^N)$ and the $L^2(B_R(x_0))$ weak convergence of the sequence $(u_n)_{n\in\N}$, it is easy to see that
$$\lim_{n\to+\infty}\int_{B_R(x_0)}\int_{B_R(x_0)}J(x-y)\hspace{0.1em}u_n(x)\hspace{0.1em}u_n(y)\hspace{0.1em}\mathrm{d}x\mathrm{d}y = \int_{B_R(x_0)}\int_{B_R(x_0)}J(x-y)\hspace{0.1em}v(x)\hspace{0.1em}v(y)\hspace{0.1em}\mathrm{d}x\mathrm{d}y$$
and therefore
\begin{equation}\label{bchv-eq-lim3}
\lim_{n\to+\infty} -\frac{1}{2}\int_{B_R(x_0)}\int_{B_R(x_0)}J(x-y)\big[u_n(x)u_n(y)-v(x)v(y)\big]\,\mathrm{d}x\mathrm{d}y = 0.
\end{equation}
On the other hand, since by assumption $\tilde f'(s)<1$ for all $s \in \R $, the function $G$ is convex and, for all $n\in\N$, we get
$$\int_{B_R(x_0)}\big[G(u_n(x)) -G(v)(x)\big]\,\mathrm{d}x\ge \int_{B_R(x_0)}G'(v(x))\big[u_n(x) -v(x)\big]\,\mathrm{d} x.$$
{F}rom the  definition of $G$ and $\tilde f$, together with the fact that $v\in L^2(B_R(x_0))$, we infer that $G'(v) \in L^2(B_R(x_0))$. Using the $L^2(B_R(x_0))$ weak convergence of $(u_n)_{n\in\N}$ to $v$, it follows that
\begin{equation}\label{bchv-eq-lim4}
\liminf_{n\to+\infty} \int_{B_R(x_0)}\big[G(u_n(x)) -G(v(x))\big]\, \mathrm{d}x \ge 0.
\end{equation}
Thus passing to the limit in~\eqref{bchv-eq-lim2}, and using~\eqref{bchv-eq-lim3} and~\eqref{bchv-eq-lim4}, we obtain $\gamma -\e(v)\ge 0$. Together with the definition~\eqref{defgamma} of $\gamma$, this shows that  $v$ is a minimiser of the energy $\e$, that is,~\eqref{bchv-cla-enerv} holds.
\vskip 0.3cm

\noindent{\it{Step 3: $v$ is a continuous positive solution $u$ of~\eqref{eq:BR}}}
\vskip 0.3cm
\noindent{}We first show in this step that $v$ is a solution to~\eqref{eq:BR} with $\tilde{f}$ instead of $f$. {F}rom~\eqref{bchv-cla-enerv}, $v$ is a critical point of $\e$ and in particular, it follows from the formulation~\eqref{bchv-def-energy1} of $\e$ that $v$ is a non-negative weak solution of $\mathcal{L}_{B_R(x_0)}[v] -v +\tilde f(v)=0$ in $B_R(x_0)$. Since all functions $\mathcal{L}_{B_R(x_0)}[v]$, $v$ and $\tilde f(v)$ belong to $L^2(B_R(x_0))$, the function $v$ satisfies $\mathcal{L}_{B_R(x_0)}[v](x)-v(x)+\tilde f(v(x))=0$ for a.e. $x\in B_R(x_0)$. Furthermore, since $J\in L^2(\R^N)$, the Cauchy-Schwarz inequality implies that $\mathcal{L}_{B_R(x_0)}[v]\in L^{\infty}(B_R(x_0))$. Therefore, since by~\eqref{C4} the function $s\mapsto s-\tilde{f}(s)$ is a $C^1$ diffeomorphism from $\R^+$ onto $\R^+$ and since $v$ is non-negative, it follows from the equation $\mathcal{L}_{B_R(x_0)}[v]-v+\tilde f(v)=0$ a.e. in $B_R(x_0)$ that $v\in L^{\infty}(B_R(x_0))$.  Thus by reproducing the arguments of the proof of Lemma~\ref{PROP:CONE} we  deduce that $v$ has a representative, still denoted $v$, which is continuous in $\overline{B_R(x_0)}$ and satisfies
\begin{equation}\label{eq:BRtilde-fort}
\mathcal{L}_{B_R(x_0)}[v](x)-v(x)+\tilde f(v(x))=0 \quad{\mbox{ for all }}x\in\overline{B_R(x_0)}.
\end{equation}

Remember now that, from~\eqref{C2}, $J>0$ a.e. in $\mathcal{A}(r_1,r_2)$ with $0\le r_1<r_2$, and that $R\ge R_J\ge r_2>r_1$, with $\mathrm{supp}(J)\subset B_{R_J}$. As a consequence, if
there exists a point $x\in\overline{B_R(x_0)}$ such that $v(x)=0$, then, arguing as in the proof of the strong maximum principle (Lemma~\ref{STRONG:LE23}) or as in the proof of
the sweeping principle (Lemma~\ref{TH:SWEEP}), it follows that $v=0$ in $\overline{\mathcal{A}(x,r_1,r_2)}\cap\overline{B_R(x_0)}$, hence $v=0$ in $\overline{\mathcal{A}(y,r_1,r_2)}\cap\overline{B_R(x_0)}$ for all $y\in\overline{\mathcal{A}(x,r_1,r_2)}\cap\overline{B_R(x_0)}$ and finally $v=0$ in $\overline{B_r(x)}\cap\overline{B_R(x_0)}$ for some $r>0$. Therefore, the non-empty set $\big\{x\in\overline{B_R(x_0)};\ v(x)=0\big\}$ is both (obviously) closed and open relatively to $\overline{B_R(x_0)}$ and is thus equal to $\overline{B_R(x_0)}$. As a consequence, either $v\equiv 0$ in $\overline{B_R(x_0)}$ or $v>0$ in $\overline{B_R(x_0)}$.

In this paragraph, we prove that the solution $v$ constructed is a solution of~\eqref{eq:BR}, namely we just need to show that $v\le 1$ in $\overline{B_R(x_0)}$. To do so, define $M=\max_{\overline{B_R(x_0)}}v\ge 0$ and let $\bar{x}\in\overline{B_R(x_0)}$ be such that $v(\bar{x})=M$. Assume by contradiction that $M>1$. By evaluating~\eqref{eq:BRtilde-fort} at $\bar{x}$ and using the definition of $\tilde f$, we get that
$$\int_{B_R(x_0)}J(\bar{x}-y)\hspace{0.1em}v(y)\hspace{0.1em}\mathrm{d}y=\mathcal{L}_{B_R(x_0)}[v](\bar{x})=M-\tilde f(M)>M.$$
Since $v\le M$ in $\overline{B_R(x_0)}$, this leads to a contradiction. Hence $M\le 1$ and thus $v$ is a non-negative continuous solution of~\eqref{eq:BR} in $\overline{B_R(x_0)}$. Furthermore, as for the positivity of $v$, one gets that either $v\equiv 1$ in $\overline{B_R(x_0)}$ or $v<1$ in $\overline{B_R(x_0)}$. The former case is impossible since $\mathcal{L}_{B_R(x_0)}[v]\not\equiv 1$ in $\overline{B_R(x_0)}$ (indeed, $\int_{B_R(x_0)}J(x-y)\,\mathrm{d}y<1$ for all $x\in\partial B_R(x_0)$). Thus, $0\le v<1$ in $\overline{B_R(x_0)}$.

Finally, let us verify that the solution $v$ constructed is not the trivial solution. To do so, it is enough to show that $\e(v)\neq\e(0)=0$. We claim that, for $R>R_J$ large enough, $\e(v)<0$.  Indeed, let us consider the test function $\varphi:=\mathds{1}_{B_R(x_0)}\in L^2(B_R(x_0))$. We have
$$\begin{array}{rcl}
\e(\varphi) & \!\!\!=\!\!\! & \displaystyle\frac{1}{4}\int_{\!B_R(x_0)}\!\int_{\!B_R(x_0)}\!\!\!\!J(x\!-\!y)\big(\varphi(y)\!-\!\varphi(x)\big)^{2}\,\mathrm{d}x\mathrm{d}y+\frac{1}{2}\!\int_{\!B_R(x_0)}\!\!\!\!c(x)\varphi^2(x)\,\mathrm{d}x -\!\int_{\!B_R(x_0)}\!\!\!\!F(\varphi(x))\,\mathrm{d}x\vspace{3pt}\\
& \!\!\!=\!\!\! & \displaystyle\frac{1}{2}\int_{B_R(x_0)}c(x)\,\mathrm{d}x-R^N|B_1|\int_{0}^{1}f(s)\,\mathrm{d}s\vspace{3pt}\\
& \!\!\!=\!\!\! & \displaystyle\frac{1}{2}\int_{B_R(x_0)}\int_{\R^N\setminus B_R(x_0)}J(x-y)\,\mathrm{d}y\,\mathrm{d}x-R^N|B_1|\int_{0}^{1}f(s)\,\mathrm{d}s.
\end{array}$$
Since $\mathrm{supp}(J)\subset B_{R_J}$, the above equality yields
\begin{align*}
\e(\varphi)&=\frac{1}{2}\int_{B_R(x_0)\setminus B_{R-R_J}(x_0)}\int_{\R^N\setminus B_R(x_0)}J(x-y)\,\mathrm{d}y\,\mathrm{d}x-R^N|B_1|\int_{0}^{1}f(s)\,\mathrm{d}s,\\
&\le \frac{1}{2}\,|B_1|\left(R^N-(R-R_J)^N\right) - R^N|B_1|\int_{0}^{1}f(s)\,\mathrm{d}s.
\end{align*}
Thus, since $\int_{0}^1f(s)\,ds>0$, there exists $d_0=d_0(J,f)\in(R_J,+\infty)$, independent of $x_0$, such that, for every $R\ge d_0$, the right-hand side of the above inequality is negative and thus $\e(v)\le \e(\varphi)<0,$ which proves our claim. Furthermore, since $0\le v<1$ in $\overline{B_R(x_0)}$ and $F\le0$ in $[0,\theta]$, one infers that $\max_{\overline{B_R(x_0)}}v>\theta$, hence $v>0$ in $\overline{B_R(x_0)}$ (remember that $v$ was either positive or identically equal to $0$ in $\overline{B_R(x_0)}$).

As a conclusion, for every $R$ such that $R\ge d_0$, there exists a solution $v\in C(\overline{B_R(x_0)},(0,1))$ to~\eqref{eq:BR} with $\max_{\overline{B_R(x_0)}}v>\theta$. The point $x_0\in\R^N$ being  arbitrary and the constant $d_0$ being independent of $x_0$, the proof of Lemma~\ref{existence-BR} is thereby complete.
\end{proof}


\subsection{Existence and properties of the maximal solution in $\overline{B_R(x_0)}$}\label{sec62}

Let us now look more closely at the properties of positive solutions of~\eqref{eq:BR} and in particular at the maximal solution, if any. To this end,  let us in this subsection extend continuously $f$ by $f'(1)(s-1)$ for $s\ge 1$ and by $0$ for $s\le 0$. To simplify our presentation let us still denote $f$ this extension.

Let us first recall the notion of maximal solution for problem~\eqref{eq:BR}.

\begin{definition}
Let $x_0\in \R^N$ and $R>0$. A function $v\in C(\overline{B_R(x_0)},[0,1])$ is called a \emph{maximal solution} to~\eqref{eq:BR} in  $\overline{B_R(x_0)}$ if any solution $w\in C(\overline{B_R(x_0)},[0,1])$ satisfies $w\le v$ in $\overline{B_R(x_0)}$.
\end{definition}

The following lemma provides the existence and uniqueness of a maximal solution to the problem~\eqref{eq:BR} when $R>0$ is large enough.

\begin{lemma}\label{bchv-lem-exis-msol}
Assume that $f$ and $J$ satisfy~\eqref{C1},~\eqref{C2} and~\eqref{C4}. Assume further that $J$ is compactly supported and $J\in L^2(\R^N)$.  Then there exists $d_0=d_0(f,J)>0$, given as in Lemma~$\ref{existence-BR}$, such that for every $x_0\in \R^N$ and $R\ge d_0$, problem~\eqref{eq:BR} admits a unique maximal solution $v_{x_0,R}$ in $\overline{B_{R}(x_0)}$ and $v_{x_0,R}$ satisfies $0<v_{x_0,R}<1$ in $\overline{B_R(x_0)}$.
\end{lemma}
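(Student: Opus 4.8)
The plan is to obtain the maximal solution as the monotone limit of a decreasing iteration scheme started from the constant supersolution $1$, and then to use Lemma~\ref{existence-BR} to show this limit is nontrivial. First I would fix $x_0\in\R^N$ and $R\ge d_0$, with $d_0$ as in Lemma~\ref{existence-BR}, and introduce the operator $T$ acting on $v\in C(\overline{B_R(x_0)},[0,1])$ by $Tv(x):=g^{-1}\big(\mathcal{L}_{B_R(x_0)}[v](x)+f(v(x))\big)$, where $g(s)=s-f(s)$ (with $f$ the extension described just before the lemma). Using $\max_{[0,1]}f'<1$, which is part of~\eqref{C4}, $g$ is a $C^1$ increasing diffeomorphism, so $T$ is well defined, monotone (since $\mathcal{L}_{B_R(x_0)}$ is order-preserving and $s\mapsto s+f(s)$ is nondecreasing on $[0,1]$ again by $f'<1$), and maps $C(\overline{B_R(x_0)},[0,1])$ into itself after checking $0\le Tv\le 1$; the latter uses $f(0)\ge0$, $f(1)=0$, and $\int_{B_R(x_0)}J(x-y)\,\D y\le1$. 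A function $v$ solves~\eqref{eq:BR} iff $Tv=v$.

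Next I would start the iteration at $v_0\equiv1$. Since $1$ is a supersolution (indeed $\mathcal{L}_{B_R(x_0)}[1](x)-1+f(1)=\int_{B_R(x_0)}J(x-y)\,\D y-1\le0$), monotonicity of $T$ gives $v_1:=Tv_0\le v_0$ and then inductively $v_{n+1}:=Tv_n$ is nonincreasing and bounded below by $0$. Hence $v_n\downarrow \bar v$ pointwise; the convergence is in fact uniform on $\overline{B_R(x_0)}$ by Dini's theorem once one knows $\bar v$ is continuous, and continuity of $\bar v$ follows because the limit inherits the fixed-point identity $\bar v=T\bar v$ (pass to the limit using dominated convergence in the convolution term and continuity of $g^{-1}$ and $f$), and a fixed point of $T$ is automatically continuous by the same argument as in Lemma~\ref{PROP:CONE} — here $\mathcal{L}_{B_R(x_0)}[\bar v]\in C(\overline{B_R(x_0)})$ since $J\in L^1$, so $g(\bar v)$ is continuous, so $\bar v$ is. Thus $\bar v=:v_{x_0,R}$ solves~\eqref{eq:BR} and takes values in $[0,1]$. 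Maximality is immediate: any solution $w\in C(\overline{B_R(x_0)},[0,1])$ satisfies $w\le v_0\equiv1$, and applying $T$ and using $Tw=w$ and monotonicity gives $w=T^n w\le T^n v_0=v_n$ for all $n$, hence $w\le v_{x_0,R}$. Uniqueness of the maximal solution is then a formality.

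It remains to show $0<v_{x_0,R}<1$ in $\overline{B_R(x_0)}$. For the strict upper bound, since $v_{x_0,R}$ is the maximal solution it dominates the solution $v$ produced by Lemma~\ref{existence-BR}, which satisfies $\max_{\overline{B_R(x_0)}}v>\theta>0$; hence $v_{x_0,R}\not\equiv0$. By the dichotomy for nonnegative solutions established inside the proof of Lemma~\ref{existence-BR} (via the strong maximum principle / sweeping argument, using~\eqref{C2} and $\mathrm{supp}(J)\subset B_{R_J}$ with $R\ge d_0>R_J$), a nonnegative continuous solution that is not identically $0$ is strictly positive, so $v_{x_0,R}>0$ in $\overline{B_R(x_0)}$. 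For the strict lower bound, suppose $v_{x_0,R}(\bar x)=1$ at some $\bar x$; evaluating~\eqref{eq:BR} at $\bar x$ gives $\int_{B_R(x_0)}J(\bar x-y)\,v_{x_0,R}(y)\,\D y=1-f(1)=1$, which combined with $v_{x_0,R}\le1$ and $\int_{B_R(x_0)}J(\bar x-y)\,\D y\le1$ forces $v_{x_0,R}=1$ on the support of $J(\bar x-\cdot)$ intersected with $B_R(x_0)$; iterating (again as in Lemma~\ref{STRONG:LE23}) propagates $v_{x_0,R}\equiv1$ on all of $\overline{B_R(x_0)}$, which is impossible because for $x\in\partial B_R(x_0)$ one has $\int_{B_R(x_0)}J(x-y)\,\D y<1$ so the equation fails there. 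Hence $v_{x_0,R}<1$ throughout. The main obstacle is the continuity/convergence step: because the operator is not compactifying, one cannot invoke a compactness argument, and care is needed to verify that the monotone limit genuinely solves the equation and is continuous — but, as indicated, this is handled by dominated convergence in the (fixed, $L^1$) convolution kernel together with the Lemma~\ref{PROP:CONE}-type regularity bootstrap.
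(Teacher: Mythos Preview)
Your overall architecture---monotone iteration from the constant supersolution $1$, then maximality by sandwich, then the $0<v_{x_0,R}<1$ dichotomies---is exactly what the paper does (the paper iterates via the linear resolvent $\mathcal{L}_{B_R(x_0)}-(k+1)$ rather than a nonlinear map, but that is a cosmetic difference). However, your iteration map $T$ is set up incorrectly, and this breaks the argument as written.

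With $g(s)=s-f(s)$ and $Tv:=g^{-1}\big(\mathcal{L}_{B_R(x_0)}[v]+f(v)\big)$, a fixed point satisfies $v-f(v)=\mathcal{L}_{B_R(x_0)}[v]+f(v)$, i.e.\ $\mathcal{L}_{B_R(x_0)}[v]-v+2f(v)=0$, which is not~\eqref{eq:BR}. The correct map is $Tv:=g^{-1}\big(\mathcal{L}_{B_R(x_0)}[v]\big)$; then $Tv=v$ is exactly~\eqref{eq:BR}. Relatedly, your monotonicity justification is off: you invoke ``$s\mapsto s+f(s)$ nondecreasing by $f'<1$'', but $f'<1$ yields $g'=1-f'>0$, not $1+f'\ge0$, and nothing in~\eqref{C4} prevents $f'<-1$ near $1$ (only $f'(1)<0$ and $f'<1$ are assumed). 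Moreover, even if $s+f(s)$ were monotone, it does not appear in your $T$: the summands are $\mathcal{L}_{B_R(x_0)}[v](x)$ and $f(v(x))$, not $v(x)$ and $f(v(x))$. With the corrected $T$, monotonicity is immediate from $J\ge0$ (so $\mathcal{L}_{B_R(x_0)}$ is order-preserving) and $g^{-1}$ increasing; the invariance $T:[0,1]\to[0,1]$ follows from $g(0)=0\le\mathcal{L}_{B_R(x_0)}[v]\le 1=g(1)$. After this fix, your Steps (iteration, continuity via the Lemma~\ref{PROP:CONE} bootstrap, maximality, and the two strict inequalities using Lemma~\ref{existence-BR} and the boundary-mass argument) go through and match the paper's proof.
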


\begin{proof}
Let  $x_0\in \R^N$ be fixed and let $R$ be fixed such that $R\ge d_0$, where $d_0=d_0(f,J)>0$ is given in Lemma~\ref{existence-BR}. We will check that the conclusion holds with this quantity $d_0$. First of all, the uniqueness of the maximal solution in $\overline{B_R(x_0)}$, if any, is a trivial consequence of its definition.

Let us then focus on the construction of a maximal solution.  {F}rom Lemma~\ref{existence-BR}, there exists a solution $v\in C(\overline{B_{R}(x_0)},(0,1))$ to~\eqref{eq:BR}. Now, remember that $1$ is a strict super-solution to~\eqref{eq:BR}. Therefore, since $f$ is Lipschitz continuous, it follows that we can construct a maximal solution $v_{x_0,R}\in C(\overline{B_{R}(x_0)},(0,1))$ to~\eqref{eq:BR} such that
$$0<v\le v_{x_0,R}<1\ \hbox{ in }\overline{B_R(x_0)}$$
by using  standard monotone iterative scheme as in~\cite[Theorem~A.1]{Coville2010}. For the sake of completeness, let us describe this scheme in the next paragraph.

First, let us observe that, from the assumptions on $J$, the linear operator $\mathcal{L}_{B_R(x_0)}$ is a continuous operator in $C(\overline{B_R(x_0)})$. Next let us choose a real number $k>0$ large enough such that the function $s\mapsto-ks-f(s)$ is decreasing in $\R$. We can increase further $k$ if necessary to ensure that $k+1\in \rho(\mathcal{L}_{B_R(x_0)})$, where $\rho(\mathcal{L}_{B_R(x_0)}) $ denotes the resolvent of the operator $\mathcal{L}_{B_R(x_0)}$. 
We note that, by this choice of $k$, the operator $\mathcal{L}_{B_R(x_0)}- (k+1)$ satisfies a comparison principle, in the sense that if $w\in C(\overline{B_R(x_0)})$ satisfies $\mathcal{L}_{B_R(x_0)}[w]- (k+1)w\ge0$ in $\overline{B_R(x_0)}$ then $w\le0$ in $\overline{B_R(x_0)}$ (see \cite{Coville2010, Coville2017}). Now, set $v_0=1$ and let $v_1\in C(\overline{B_R(x_0)})$ be the solution of the following linear problem
\begin{equation}\label{bchv-eq-induc1}
\mathcal{L}_{B_R(x_0)} [v_1](x) -(k+1)v_1(x)=-kv_0(x)-f(v_0(x)) \quad\text{ for }x\in\overline{B_R(x_0)}.
\end{equation}
The function $v_1$ is well defined, since by construction the continuous operator $\mathcal{L}_{B_R(x_0)}-(k+1)$ is invertible. We claim that $v \le v_1 \le v_0$ in $\overline{B_R(x_0)}$. Indeed, since $v\,(\le1)$ and $v_0=1$ are respectively a solution and a super-solution of~\eqref{eq:BR}, we have, for $x\in\overline{B_R(x_0)}$,
\begin{align*}
&\mathcal{L}_{B_R(x_0)}[v_1-v_0](x)-(k+1)(v_1(x)-v_0(x))=-\mathcal{L}_{B_R(x_0)}[1](x)+1\ge 0,\vspace{3pt}\\
&\mathcal{L}_{B_R(x_0)}[v_1-v](x)-(k+1)(v_1(x)-v(x))=-kv_0(x)-f(v_0(x))+kv(x)+f(v(x))\le 0.
\end{align*}
So, the inequality $v\le v_1 \le v_0$ in $\overline{B_R(x_0)}$ follows from the comparison principle satisfied by the operator $\mathcal{L}_{B_R(x_0)}-(k+1)$. In particular, $0<v_1\le 1$ in $\overline{B_R(x_0)}$. Now let  $v_2\in C(\overline{B_R(x_0)})$ be  the solution of~\eqref{bchv-eq-induc1} with $v_2$ instead of $v_1$ in the left-hand side and $v_1$ instead of $v_0$ in the right-hand side.  {F}rom the monotonicity of $s\mapsto-ks-f(s)$ and from the  comparison principle, we have $v\le v_2\le v_1\le v_0$ in $\overline{B_R(x_0)}$. By induction, we can construct a non-increasing  sequence of functions $(v_n)_{n\in\N}$ in $C(\overline{B_R(x_0)})$ satisfying  $v\le v_{n+1}\le v_n \le v_0$ in $\overline{B_R(x_0)}$ and
\begin{equation}\label{pev-eq-induc}
\mathcal{L}_{B_R(x_0)}[v_{n+1}](x) -(k+1)v_{n+1}(x)=-kv_n(x)-f(v_n(x)) \quad\text{ for }x\in\overline{B_R(x_0)}.
\end{equation}
Since the sequence is non-increasing and bounded from below, the quantity
$$v_{x_0,R}(x):=\inf_{n\in \N}v_n(x)=\lim_{n\to+\infty}v_n(x)\ \in[v(x),1]\ (\subset(0,1])$$
is well defined for every $x\in\overline{B_R(x_0)}$.  Moreover, by passing to the limit in the equation~\eqref{pev-eq-induc} and using Lebesgue's dominated convergence theorem, it follows that  $v_{x_0,R}$ is a solution of~\eqref{eq:BR}. As in the proof of Lemma~\ref{PROP:CONE}, we infer that $v_{x_0,R}$ is continuous in $\overline{B_R(x_0)}$ and, as in the proof of Lemma~\ref{existence-BR}, we get that $v_{x_0,R}<1$ in $\overline{B_R(x_0)}$. To sum up, $v_{x_0,R}$ is a solution of~\eqref{eq:BR} belonging to $C(\overline{B_{R}(x_0)},(0,1))$.

We finally claim that $v_{x_0,R}$ is a maximal solution to~\eqref{eq:BR}. Indeed, let $w\in C(\overline{B_{R}(x_0)},[0,1])$ be any solution to~\eqref{eq:BR}. By replacing $v$ with $w$ in the arguments of the previous paragraph and using the fact that the sequence $(v_n)_{n\in\N}$ is defined with the same initial value $v_0=1$, we get that $w\le v_n$ in $\overline{B_R(x_0)}$ for every $n\in\N$, hence $w\le v_{x_0,R}$ in $\overline{B_R(x_0)}$. The proof of Lemma~\ref{bchv-lem-exis-msol} is thereby complete.
\end{proof}

The maximal solutions $v_{x_0,R}$ possess some important properties, in particular they are monotone non-decreasing with respect to the domains.

\begin{lemma}\label{prop:VR} Let us assume that $f$ and $J$ satisfy~\eqref{C1},~\eqref{C2} and~\eqref{C4}. Assume further that $J$ is compactly supported and $J\in L^2(\R^N)$. Let $d_0=d_0(f,J)>0$ be given as in Lemmas~$\ref{existence-BR}$ and~$\ref{bchv-lem-exis-msol}$. The following properties hold:
\begin{itemize}
\item[(i)] for every $x_{1},x_{2} \in \R^N$ and $d_0\le R_{1}\le R_{2}$ such that $B_{R_{1}}(x_{1})\subset B_{R_{2}}(x_{2})$, then
$$v_{x_{1},R_{1}}(x)\le v_{x_{2},R_{2}}(x) \quad \text{for all }x\in\overline{ B_{R_{1}}(x_{1})};$$
\item[(ii)] for every $x_{0}\in \R^N$ and $R\ge d_0$, the function $v_{0,R}(\cdot -x_{0})$ defined in $\overline{B_{R}(x_{0})}$ satisfies $v_{0,R}(\cdot-x_0)=v_{x_{0},R}$ in $\overline{B_R(x_0)}$;
\item[(iii)] for every $x_0\in \R^N$ and $R\ge d_0$,
$$ \min_{\overline{B_R(x_0)}}v_{x_0,4R}\ge \max_{\overline{B_R(x_0)}}v_{x_0,2R}.$$
\end{itemize}
\end{lemma}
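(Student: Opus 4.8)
The plan is to prove (ii) first, then (i), then (iii): (ii) is a change of variables, (i) carries the real content, and (iii) reduces to (i) and (ii) by elementary inclusions of balls. For (ii), set $w(x):=v_{0,R}(x-x_0)$ on $\overline{B_R(x_0)}$; the substitution $z=y-x_0$ and the translation structure of the kernel give $\mathcal{L}_{B_R(x_0)}[w](x)=\mathcal{L}_{B_R(0)}[v_{0,R}](x-x_0)$, so $w\in C(\overline{B_R(x_0)},(0,1))$ solves~\eqref{eq:BR} in $\overline{B_R(x_0)}$, and by maximality of $v_{x_0,R}$ (Lemma~\ref{bchv-lem-exis-msol}) we get $w\le v_{x_0,R}$. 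Running the same argument with $v_{x_0,R}(\cdot+x_0)$, which solves~\eqref{eq:BR} in $\overline{B_R(0)}$, yields $v_{x_0,R}(\cdot+x_0)\le v_{0,R}$, i.e. $v_{x_0,R}\le w$; hence $v_{x_0,R}=w$.

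For (i), I would revisit the monotone iterative scheme of Lemma~\ref{bchv-lem-exis-msol}. Choose $k>0$ large enough that $s\mapsto -ks-f(s)$ is non-increasing on $\R$; since $\int_{B_R(x)}J(x-y)\,\mathrm{d}y\le\|J\|_{L^1}=1$ for every ball, the operator $\mathcal{L}_{B_R(x)}-(k+1)$ is invertible and obeys the comparison principle of Lemma~\ref{bchv-lem-exis-msol} with constants uniform in the ball, so the same $k$ serves both $B_{R_1}(x_1)$ and $B_{R_2}(x_2)$. Let $(v_n^{(1)})$ and $(v_n^{(2)})$ be the associated non-increasing iterates issued from $v_0^{(1)}=v_0^{(2)}=1$, converging to $v_{x_1,R_1}$ and $v_{x_2,R_2}$ respectively. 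I claim $v_n^{(1)}\le v_n^{(2)}$ in $\overline{B_{R_1}(x_1)}$ for all $n$: the base case is trivial, and for the inductive step one subtracts the two defining relations on $\overline{B_{R_1}(x_1)}$, using that $\mathcal{L}_{B_{R_1}(x_1)}[v_{n+1}^{(2)}]\le\mathcal{L}_{B_{R_2}(x_2)}[v_{n+1}^{(2)}]$ there (because $B_{R_1}(x_1)\subset B_{R_2}(x_2)$ and $v_{n+1}^{(2)}\ge 0$) and that $s\mapsto -ks-f(s)$ is non-increasing together with $v_n^{(1)}\le v_n^{(2)}$, to obtain $\mathcal{L}_{B_{R_1}(x_1)}[v_{n+1}^{(1)}-v_{n+1}^{(2)}]-(k+1)(v_{n+1}^{(1)}-v_{n+1}^{(2)})\ge 0$ in $\overline{B_{R_1}(x_1)}$; the comparison principle then forces $v_{n+1}^{(1)}\le v_{n+1}^{(2)}$ there. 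Letting $n\to+\infty$ gives $v_{x_1,R_1}\le v_{x_2,R_2}$ in $\overline{B_{R_1}(x_1)}$. (Alternatively, extending $v_{x_1,R_1}$ by $0$ outside $\overline{B_{R_1}(x_1)}$ yields a bounded sub-solution of~\eqref{eq:BR} in $\overline{B_{R_2}(x_2)}$, and the maximality proof of Lemma~\ref{bchv-lem-exis-msol} applies verbatim to bounded sub-solutions.) I expect this to be the \emph{main obstacle}: one must ensure that a single relaxation parameter $k$ controls both domains, and keep in mind that the monotonicity of $\mathcal{L}$ with respect to the domain is available only for non-negative functions.

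For (iii), I would simply combine (i) and (ii). On the one hand, for every $z\in\overline{B_R(x_0)}$ we have $B_{3R}(z)\subset B_{4R}(x_0)$, so (i) and (ii) give $v_{x_0,4R}(z)\ge v_{z,3R}(z)=v_{0,3R}(0)$, whence $\min_{\overline{B_R(x_0)}}v_{x_0,4R}\ge v_{0,3R}(0)$. On the other hand, by (ii), $\max_{\overline{B_R(x_0)}}v_{x_0,2R}=\max_{\overline{B_R(0)}}v_{0,2R}=v_{0,2R}(y^*)$ for some $y^*\in\overline{B_R(0)}$, and $v_{0,2R}(y^*)=v_{-y^*,2R}(0)$ again by (ii); since $|y^*|\le R$ we have $B_{2R}(-y^*)\subset B_{3R}(0)$, so (i) gives $v_{-y^*,2R}(0)\le v_{0,3R}(0)$. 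Chaining these, $\max_{\overline{B_R(x_0)}}v_{x_0,2R}\le v_{0,3R}(0)\le\min_{\overline{B_R(x_0)}}v_{x_0,4R}$, which is exactly (iii). Note that the radii $2R$, $3R$, $4R$ are all $\ge d_0$ because $R\ge d_0$, so all the maximal solutions invoked exist by Lemma~\ref{bchv-lem-exis-msol}, and the factor $4$ in $v_{x_0,4R}$ is precisely what leaves room for the intermediate radius $3R$.
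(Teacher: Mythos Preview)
Your proof is correct and follows essentially the same approach as the paper: translation invariance and maximality for (ii), the monotone iterative scheme of Lemma~\ref{bchv-lem-exis-msol} together with the domain-monotonicity of $\mathcal{L}$ on non-negative functions for (i), and a combination of (i) and (ii) for (iii). The only cosmetic differences are that the paper compares the single iteration sequence $(v_n)$ on the larger ball directly to the fixed maximal solution $v_{x_1,R_1}$ (rather than comparing two iteration sequences), and in (iii) it translates via a moving center $x_1\in\overline{B_{2R}(x_0)}$ instead of pivoting through the intermediate value $v_{0,3R}(0)$.
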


\begin{proof}
The proof of (i) is straightforward. Indeed, let $x_{1},x_{2} \in \R^N$ and $d_0\le R_{1}\le R_{2}$ be such that $B_{R_{1}}(x_{1})\subset B_{R_{2}}(x_{2})$.  Recall from the proof of Lemma~\ref{bchv-lem-exis-msol} that $v_{x_{2},R_{2}}\in C(\overline{B_{R_2}(x_2)},(0,1))$ can be defined as $v_{x_{2},R_{2}}=\lim_{n\to+\infty}v_n$, where $(v_n)_{n\in\N}$ is the sequence of positive functions in $C(\overline{B_{R_2}(x_2)},(0,1])$ defined by induction by $v_0=1$ in $\overline{B_{R_2}(x_2)}$ and, for $n\in\N$,
$$ \mathcal{L}_{B_{R_{2}}(x_{2})}[v_{n+1}](x) -(k+1)v_{n+1}(x)=-kv_n(x)-f(v_n(x)) \quad\text{ for }x\in\overline{ B_{R_{2}}(x_{2})}.$$
Here $k>0$ is such that $k+1\in \rho(\mathcal{L}_{B_{R_{2}}(x_{2})})$ and the function $s\mapsto-ks-f(s)$ is decreasing. By increasing $k$ if necessary we may assume that $k+1\in \rho(\mathcal{L}_{B_{R_{2}}(x_{2})}) \cap \rho(\mathcal{L}_{B_{R_{1}}(x_{1})})$. Now observe that, for any $n\in\N$, $v_n$ satisfies
\begin{equation}\label{pev-eq-inducsup}
\mathcal{L}_{B_{R_{1}}(x_{1})}[v_{n+1}](x) -(k+1)v_{n+1}(x)\le -kv_n(x)-f(v_n(x)) \quad\text{ for }x \in \overline{ B_{R_{1}}(x_{1})},
\end{equation}
that is, the function $v_{n+1}$ is a super-solution to problem~\eqref{pev-eq-induc} in $\overline{ B_{R_{1}}(x_{1})}$. We  claim that, for every $n\in\N$,
$$v_{x_{1},R_{1}}(x)\le v_n(x)\ \hbox{ for all }x\in\overline{ B_{R_{1}}(x_{1})}.$$
To do so, we proceed by induction. By construction of $v_{x_1,R_1}$ and the definition of $v_0$, we know that $v_{x_{1},R_{1}}(x)\le v_0(x)$  for all $x \in \overline{ B_{R_{1}}(x_{1})}$. For $n\in\N$, assume that $v_{x_{1},R_{1}}(x)\le v_n(x)$ for all $x \in \overline{ B_{R_{1}}(x_{1})}$, and let us prove that $v_{x_{1},R_{1}}(x)\le v_{n+1}(x)$ for all $x \in \overline{ B_{R_{1}}(x_{1})}$. Let $w:=v_{x_{1},R_{1}}-v_{n+1}$ in $\overline{B_{R_1}(x_1)}$. {F}rom~\eqref{pev-eq-inducsup}, since the function $s\mapsto-ks-f(s)$ is decreasing and $v_{x_{1},R_{1}}(x)\le v_n(x)$ for all $x \in \overline{ B_{R_{1}}(x_{1})}$, we see that $w$ satisfies
$$\mathcal{L}_{B_{R_{1}}(x_{1})}[w](x)\!-\!(k\!+\!1)w(x)\!\ge\! -kv_{x_{1},R_{1}}(x)\!-\!f(v_{x_{1},R_{1}}(x))\!+\!kv_n(x)\!+\!f(v_n(x))\!\ge\!0\hbox{ for }x\!\in\!\overline{B_{R_1}(x_1)}.$$
Since the operator $\mathcal{L}_{B_{R_1}(x_1)}-(k+1)$ satisfies the maximum principle we then deduce that $w\le0$ in $\overline{B_{R_1}(x_1)}$, that is, $v_{x_{1},R_{1}}(x)\le v_{n+1}(x)$ for all $x \in \overline{B_{R_{1}}(x_{1})}$.  Therefore, for every $x \in \overline{ B_{R_{1}}(x_{1})}$, we have $v_{x_{1},R_1}(x)\le \lim_{n\to+\infty} v_n(x)=v_{x_{2},R_{2}}(x)$.

Part (ii) follows from the following observations. For any $x_0\in\R^N$, the function $v_{0,R}(\cdot-x_0)\in C(\overline{B_R(x_0)},(0,1))$ satisfies
$$\mathcal{L}_{B_R(x_0)}[v_{0,R}(\cdot-x_0)](x)-v_{0,R}(x-x_0)+f(v_{0,R}(x-x_0))=0\ \hbox{ for all }x\in\overline{B_R(x_0)}.$$
Therefore, by the maximality of $v_{x_0,R}$, it follows that $v_{0,R}(\cdot-x_0)\le v_{x_0,R}$ in $\overline{B_R(x_0)}$. Similarly, one can show that $v_{x_0,R}(\cdot+x_0)\le v_{0,R}$ in $\overline{B_R}$. Finally, $v_{0,R}(\cdot-x_0)=v_{x_0,R}$ in $\overline{B_R(x_0)}$.

To prove (iii), we simply observe that, for any $x_1\in\overline{B_{2R}(x_0)}$, one has $B_{2R}(x_1)\subset B_{4R}(x_0)$ and, by~(i), $v_{x_0,4R}\ge v_{x_1,2R}$ in $\overline{B_{2R}(x_1)}$. Property~(ii) yields $v_{x_0,2R}(\cdot -(x_1-x_0))=v_{x_1,2R}$ in $\overline{B_{2R}(x_1)}$, hence
$$v_{x_0,4R}(x)\ge v_{x_0,2R}(x-(x_1-x_0)) \quad \text{for all }x_1\in\overline{B_{2R}(x_0)}\hbox{ and }x\in\overline{B_{2R}(x_1)}.$$
Now, since for every $x,y \in\overline{B_R(x_0)}$ there exists (a unique) $x_1\in\overline{B_{2R}(x_0)}$ such that $y=x-(x_1-x_0)$ and $x\in\overline{B_R(x_1)}\subset\overline{B_{2R}(x_1)}$, the latter inequality implies that
$$v_{x_0,4R}(x)\ge v_{x_0,2R}(x-(x_1-x_0))=v_{x_0,2R}(y)$$
for all $x,y\in\overline{B_R(x_0)}$, which completes the proof.
\end{proof}

We can now state our last property about the maximal solution.

\begin{lemma}\label{prop:limVR} Let us assume that $f$ and $J$ satisfy~\eqref{C1},~\eqref{C2}  and~\eqref{C4}. Assume further that $J$ is compactly supported and $J\in L^{2}(\R^N)$. Then, for every $x_0\in\R^N$, $v_{x_0,R}\to1$ as $R\to+\infty$ locally uniformly in $\R^N$.
\end{lemma}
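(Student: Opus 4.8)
The plan is to exploit the monotonicity of the maximal solutions with respect to the domain (part~(i) of Lemma~\ref{prop:VR}) together with the translation invariance (part~(ii)) and the ``interior vs.\ annular'' comparison (part~(iii)), in order to pass to a monotone limit and identify it as a spatially homogeneous solution equal to~$1$. First, by part~(ii) it suffices to treat $x_0=0$, since $v_{x_0,R}=v_{0,R}(\cdot-x_0)$ and local uniform convergence is translation invariant. Next, by part~(i), for fixed $x\in\R^N$ the map $R\mapsto v_{0,R}(x)$ is non-decreasing (for $R$ large enough that $x\in\overline{B_R}$ and $R\ge d_0$), and it is bounded above by~$1$; hence the pointwise limit $V(x):=\lim_{R\to+\infty}v_{0,R}(x)\in(0,1]$ exists for every $x\in\R^N$. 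The heart of the argument is to show $V\equiv1$.

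To identify $V$, I would first argue that $V$ is actually constant. The natural tool is part~(iii): applied with center $x_0$ and radius $R$, it gives $\min_{\overline{B_R(x_0)}}v_{x_0,4R}\ge\max_{\overline{B_R(x_0)}}v_{x_0,2R}$. Using part~(ii) to rewrite everything in terms of $v_{0,\cdot}$ and letting $R\to+\infty$ along the monotone limit, one deduces that the limiting function cannot oscillate: for any two points $x,y$, by choosing $R$ large enough that both lie in $\overline{B_R(x_0)}$ for a suitable $x_0$, one gets $v_{0,4R'}(x)\ge v_{0,2R''}(y)$ for appropriate radii, and passing to the limit yields $V(x)\ge V(y)$; by symmetry $V(x)=V(y)$. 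So $V\equiv m$ for some constant $m\in(0,1]$. Alternatively—and perhaps more cleanly—one passes to the limit directly in the equation: since $\mathrm{supp}(J)\subset B_{R_J}$, for any fixed $x$ and all $R$ with $B_{R_J}(x)\subset B_R$ one has $\int_{B_R}J(x-y)\,\mathrm{d}y=1$, so $v_{0,R}$ solves $\int_{B_R}J(x-y)v_{0,R}(y)\,\mathrm{d}y-v_{0,R}(x)+f(v_{0,R}(x))=0$ with full mass; by monotone (hence dominated) convergence and continuity of $f$, the limit $V$ satisfies $\int_{\R^N}J(x-y)V(y)\,\mathrm{d}y-V(x)+f(V(x))=0$ for every $x$. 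Combined with $V$ being constant, this forces $f(m)=0$, so $m\in\{\theta,1\}$ by~\eqref{C4} (recall $0<m$, so $m\ne0$).

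To rule out $m=\theta$, I would use the energy/positivity lower bound already proved: Lemma~\ref{existence-BR} (and its proof, via the test function $\varphi=\mathds{1}_{B_R(x_0)}$ and $F\le0$ on $[0,\theta]$) shows $\max_{\overline{B_R(x_0)}}v_{x_0,R}>\theta$ for $R\ge d_0$; in fact the more robust route is part~(iii) again, which gives $\min_{\overline{B_R(x_0)}}v_{x_0,4R}\ge\max_{\overline{B_R(x_0)}}v_{x_0,2R}>\theta$, i.e.\ for every $x$ and all large $R$, $v_{0,4R}(x)>\theta$, so $m=V(x)\ge\theta$ is not enough by itself—one needs strictness. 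Here I would instead argue that if $m=\theta$, then since $V\equiv\theta$ and $f'(\theta)>0$, a standard linearization/supersolution argument near the unstable state (or: sliding the one-dimensional profile $\phi$ from~\eqref{C3}, as in Lemma~\ref{sub-solution}'s ``sweeping'' philosophy restricted to balls) produces solutions strictly exceeding $\theta$ on a fixed ball, contradicting that the \emph{maximal} solution converges down to $\theta$; more concretely, by part~(iii) and the strict inequality $\max_{\overline{B_R(x_0)}}v_{x_0,2R}>\theta$ together with strong-maximum-principle type propagation (Lemma~\ref{STRONG:LE23}), one upgrades to $\min_{\overline{B_R(x_0)}}v_{x_0,4R}\ge\theta+c$ for a fixed $c>0$ independent of $R$ once $R$ is large. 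Hence $m\ge\theta+c>\theta$, forcing $m=1$. Therefore $v_{0,R}\to1$ pointwise, monotonically; since each $v_{0,R}$ is continuous and the limit $1$ is continuous, Dini's theorem upgrades this to locally uniform convergence, and translating back gives the claim for general $x_0$.

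\textbf{Main obstacle.} The delicate point is excluding the unstable equilibrium $m=\theta$: pointwise monotone convergence of the maximal solutions only gives $m\ge\theta$ a priori, and one must produce a \emph{uniform-in-$R$} strict lower bound $m\ge\theta+c$. I expect this to require combining Lemma~\ref{prop:VR}(iii) with either the sliding-profile construction underlying Lemma~\ref{sub-solution} (using $\phi$ from~\eqref{C3} as a subsolution on large balls, which is where compact support of $J$ and $R_J$ enter) or a direct linearized lower barrier exploiting $f'(\theta)>0$; the bookkeeping of domains (matching $B_R$, $B_{2R}$, $B_{4R}$ and the $R_J$-collar where $\int_{B_R}J<1$) is the part that needs care, though it is routine once the right barrier is in hand.
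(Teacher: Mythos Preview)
Your overall strategy matches the paper's: form the monotone pointwise limit $V$ using part~(i) of Lemma~\ref{prop:VR}, pass to the limit in the equation to get $J\ast V-V+f(V)=0$ on $\R^N$, use part~(iii) to force $V$ to be constant, and then identify the constant via $f(m)=0$. The use of Dini's theorem (equivalently, monotone convergence to a continuous limit) to upgrade to local uniform convergence is also what the paper does.

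Where you diverge from the paper is in what you flag as the ``main obstacle'': ruling out $m=\theta$. You propose sliding the profile $\phi$, or a linearized barrier at the unstable state, or a strong-maximum-principle propagation to manufacture a uniform-in-$R$ gap $\theta+c$. None of this is needed. The point you are missing is that the same chain of inequalities that proves constancy already delivers the strict bound, provided you \emph{anchor it at a fixed radius}. Concretely, fix any $\delta\ge d_0$ and any $x_1\in\R^N$; then for all large $n$ (along $R_{n+1}\ge 4R_n$ with $B_\delta(x_1)\subset B_{R_n}(x_0)$), parts~(i) and~(iii) together with Lemma~\ref{existence-BR} give
\[
\min_{\overline{B_\delta(x_1)}} v_{x_0,R_{n+1}}\ \ge\ \max_{\overline{B_{R_n}(x_0)}} v_{x_0,R_n}\ \ge\ \max_{\overline{B_\delta(x_1)}} v_{x_1,\delta}\ >\ \theta.
\]
The right-hand side is a \emph{fixed number} independent of $n$, so in the limit $V\ge \max_{\overline{B_\delta(x_1)}} v_{x_1,\delta}>\theta$ on $\overline{B_\delta(x_1)}$. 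Since $f(m)=0$ and $\theta<m\le1$, condition~\eqref{C4} forces $m=1$ immediately. Your version of the chain, $\min_{\overline{B_R(x_0)}}v_{x_0,4R}\ge\max_{\overline{B_R(x_0)}}v_{x_0,2R}>\theta$, lets the lower bound depend on $R$ and this is why you lose strictness in the limit; replacing the varying inner ball by a fixed $B_\delta(x_1)$ (and using monotonicity~(i) to compare $v_{x_0,2R}$ to $v_{x_1,\delta}$) is the one-line fix.
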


\begin{proof}
Let $x_0\in \R^N$ be fixed. Consider any non-decreasing sequence $(R_n)_{n\in \N}$ in $[d_0,+\infty)$ and converging to $+\infty$, where $d_0=d_0(f,J)>0$ is given in Lemmas~\ref{existence-BR} and~\ref{bchv-lem-exis-msol} (we recall that $d_0>R_J$, where $\supp(J)\subset B_{R_J}$). Thanks to part~(i) of Lemma~\ref{prop:VR}, the sequence  $(v_{x_0,R_n})_{n\in\N}$ is non-decreasing, in the sense that $v_{x_0,R_n}\le v_{x_0,R_p}$ in $\overline{B_{R_n}(x_0)}$ for all $n\le p$. Moreover, $0<v_{x_0,R_n}<1$ in $\overline{B_{R_n}(x_0)}$ for each $n\in\N$. As a consequence, the sequence $(v_{x_0,R_n})_{n\in\N}$ converges pointwise in $\R^N$ to a function $0<\bar v\le 1$ which, thanks to Lebesgue's dominated convergence theorem, satisfies
\begin{equation}\label{eq:global}
J\ast \bar v(x) -\bar v(x) +f(\bar v(x))=0 \quad{\mbox{ for all }}x\in\R^N.
\end{equation}
As in the proof of Lemma~\ref{PROP:CONE}, the function $\bar{v}$ can be viewed as a uniformly continuous function and therefore the limit $v_{x_0,R_n}\to\bar{v}$ holds locally uniformly in $\R^N$.

Consider now any $x_1\in\R^N$ and any $\delta\in[d_0,+\infty)$. We can then extract a subsequence of $(R_n)_{n\in \N}$ that we still denote $(R_n)_{n\in \N}$ such that, for all $n\in\N$, $B_{\delta}(x_1)\subset B_{R_n}(x_0)$ and $R_{n+1}\ge 4R_{n}$. By Lemma~\ref{existence-BR} and parts~(i) and~(iii) of Lemma~\ref{prop:VR}, we get that
\be\label{ineqs}\baa{l}
\ds1>\min_{\overline{B_\delta(x_1)}} v_{x_0,R_{n+1}}\ge \min_{\overline{B_{R_n}(x_0)}} v_{x_0,R_{n+1}}\ge\min_{\overline{B_{R_n}(x_0)}}v_{x_0,4R_n}\ge\max_{\overline{B_{R_n}(x_0)}}v_{x_0,2R_n}\ge\cdots\vspace{3pt}\\
\qquad\qquad\ds\cdots\ge\max_{\overline{B_{R_n}(x_0)}}v_{x_0,R_n}\ge\max_{\overline{B_\delta(x_1)}}v_{x_0,R_n}\ge\max_{\overline{B_\delta(x_1)}}v_{x_1,\delta}>\theta.\eaa
\ee
Taking the limit as $n\to+\infty$ in the inequality $$
\min_{\overline{B_\delta(x_1)}} v_{x_0,R_{n+1}}\ge\max_{\overline{B_\delta(x_1)}}v_{x_0,R_n},$$
we obtain that
$$\min_{\overline{B_{\delta}(x_1)}}\bar v \ge \max_{\overline{B_{\delta}(x_1)}}\bar v.$$ Hence, $\bar{v}$ is equal to a constant $C_{x_1,\delta}$ in $\overline{B_{\delta}(x_1)}$ and, thanks to~\eqref{ineqs}, there holds $\theta<C_{x_1,\delta}\le1$. Furthermore,
since~$x_1\in\R^N$ is
arbitrary, it follows that $\bar{v}$ is equal to a constant $C\in(\theta,1]$ in $\R^N$.

Lastly,~\eqref{eq:global} yields $f(C)=0$. Since $f$ satisfies~\eqref{C4} and $\theta<C\le1$, we infer that $C=1$. Therefore, $\bar v=1$ in $\R^N$ and thus the sequence $(v_{x_0,R_n})_{n\in\N}$ converges to $1$ locally uniformly in $\R^N$ as $n\to+\infty$. Since
the non-decreasing sequence $(R_n)_{n\in \N}$ converging to $+\infty$ is arbitrary,
and so is~$\delta\in [d_0,+\infty)$, it follows that $v_{x_0,R}$ converges to $1$ locally uniformly in $\R^N$ as $R\to+\infty$. The proof of Lemma~\ref{prop:limVR} is thereby complete.
\end{proof}


\subsection{Compactly supported continuous sub-solutions in $\R^N$}\label{sec63}

In this section, we construct compactly supported continuous sub-solutions from $\R^N$ to $[0,1]$ of problems of the type~\eqref{eq:BR}. Such continuous sub-solutions will then serve as a building block of some lower bounds in the proof of Theorem~\ref{TH:LIOUVILLE2}.

Let us first introduce some useful notations. For $x_0\in\R^N$, $R>0$ and $x\in\R^N$, let $\mathcal{P}_{x_0,R}(x)$ be the projection of $x$ on the closed convex set $\overline{B_R(x_0)}$, that is, $\mathcal{P}_{x_0,R}(x)\in \overline{B_R(x_0)}$ and
$$|x-\mathcal{P}_{x_0,R}(x)|=\mathrm{dist}(x,B_R(x_0))=\min_{y\in\overline{B_R(x_0)}}|x-y|.$$

\begin{lemma}\label{cla:subsol}
Assume that $f$ and $J$ satisfy~\eqref{C1},~\eqref{C2} and~\eqref{C4}. Assume further that $J$ is compactly supported and $J\in L^2(\R^N)$. Let $d_0=d_0(f,J)>0$ be given as in Lemmas~$\ref{existence-BR}$ and~$\ref{bchv-lem-exis-msol}$ and, for any $x_0\in \R^N$ and $R\ge d_0$, let $v_{x_0,R}\in C(\overline{B_R(x_0)},(0,1))$ be the maximal solution of~\eqref{eq:BR}. Then
there exists~$\delta_0>0$ such that, for any $x_0\in \R^N$, $R\ge d_0$ and $\delta\in(0,\delta_0]$, the continuous function $w_{x_0,R,\delta}:\R^N\to[0,1)$ defined by
\be\label{AX}
w_{x_0,R,\delta}(x)=\max\big\{v_{x_0,R}(\p{x_0,R}(x))-\delta^{-1}\,|x-\mathcal{P}_{x_0,R}(x)|,0\big\}\ee
satisfies
\be\label{subsol}
\underbrace{\int_{B_{R'}(x_0)}\!\!\!J(x-y)\,w_{x_0,R,\delta}(y)\,\mathrm{d}y}_{=\mathcal{L}_{B_{R'}(x_0)}[w_{x_0,R,\delta}](x)} -w_{x_0,R,\delta}(x)+f(w_{x_0,R,\delta}(x))\ge 0 \quad \text{for all }x\in\R^N
\ee
and for all $R'\ge R+\delta$.
\end{lemma}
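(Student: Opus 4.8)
The plan is to verify the sub-solution inequality~\eqref{subsol} pointwise, splitting $\R^N$ into regions according to the position of $x$ relative to $\overline{B_R(x_0)}$ and relative to the support of $w_{x_0,R,\delta}$. Writing $w=w_{x_0,R,\delta}$, $P=\mathcal{P}_{x_0,R}$ and $d(x)=|x-P(x)|=\mathrm{dist}(x,\overline{B_R(x_0)})$, the function $w$ is the nonnegative part of $x\mapsto v_{x_0,R}(P(x))-\delta^{-1}d(x)$; it is $1$-Lipschitz-like with slope $\delta^{-1}$ away from $\overline{B_R(x_0)}$ (more precisely, $v_{x_0,R}\circ P$ is Lipschitz on $\R^N$ because $P$ is $1$-Lipschitz and $v_{x_0,R}$ is uniformly continuous on $\overline{B_R(x_0)}$, but the decisive pointwise control is the explicit $-\delta^{-1}d(x)$ term). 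Note also that $w$ is supported in a bounded neighbourhood of $\overline{B_R(x_0)}$, and since $R'\ge R+\delta\ge R+\mathrm{supp}(J)$-radius is \emph{not} assumed, one should keep $R'$ general and simply observe that $w$ vanishes outside $\overline{B_{R+\delta\,\|v\|_\infty}(x_0)}\subset\overline{B_{R+\delta}(x_0)}$ (since $v_{x_0,R}<1$), so for $R'\ge R+\delta$ one has $\mathrm{supp}(w)\subset\overline{B_{R'}(x_0)}$ and $\mathcal{L}_{B_{R'}(x_0)}[w](x)=\int_{\R^N}J(x-y)w(y)\,\D y=J\ast w(x)$ for every $x$. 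This reduces~\eqref{subsol} to showing $J\ast w-w+f(w)\ge0$ on all of $\R^N$, uniformly for small $\delta$.

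I would then distinguish three cases. \textbf{Case 1: $x\in\overline{B_R(x_0)}$.} Here $P(x)=x$, $d(x)=0$, so $w(x)=v_{x_0,R}(x)$, and since $w\ge$ (the extension of $v_{x_0,R}$ by this max formula) while $v_{x_0,R}$ solves $\mathcal{L}_{B_R(x_0)}[v_{x_0,R}]-v_{x_0,R}+f(v_{x_0,R})=0$ with $\mathcal{L}_{B_R(x_0)}[v_{x_0,R}](x)=\int_{B_R(x_0)}J(x-y)v_{x_0,R}(y)\,\D y\le\int_{\R^N}J(x-y)w(y)\,\D y=J\ast w(x)$ (because $w\ge v_{x_0,R}$ on $\overline{B_R(x_0)}$ and $w\ge0$ elsewhere), we get $J\ast w(x)-w(x)+f(w(x))\ge\mathcal{L}_{B_R(x_0)}[v_{x_0,R}](x)-v_{x_0,R}(x)+f(v_{x_0,R}(x))=0$. \textbf{Case 2: $x\notin\overline{B_R(x_0)}$ and $w(x)>0$.} Then $w(x)=v_{x_0,R}(P(x))-\delta^{-1}d(x)$ with $0<w(x)<1$, and the term $f(w(x))$ is $O(1)$ while $-w(x)\ge -1$; the key is to show $J\ast w(x)\ge w(x)-f(w(x))$. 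For this I would use that for $y$ in the support of $J(x-\cdot)$ one has $|P(y)-P(x)|\le|y-x|$ and $|d(y)-d(x)|\le|y-x|$, hence $w(y)\ge v_{x_0,R}(P(x))-\omega(|y-x|)-\delta^{-1}(d(x)+|y-x|)\ge w(x)-\omega(\mathrm{diam}\,\mathrm{supp}\,J)-\delta^{-1}\mathrm{diam}\,\mathrm{supp}\,J$ whenever the right side is $\ge0$, where $\omega$ is the modulus of continuity of $v_{x_0,R}$; a cruder but cleaner route, mimicking the construction of such barriers in the literature, is to note that \emph{along the segment from $P(x)$ out to $x$} the function decreases at rate exactly $\delta^{-1}$, so choosing $\delta_0$ small enough that $\delta^{-1}\ge$ (a Lipschitz-type constant controlling $|f|$ plus the mass loss of $J$) forces the inequality. \textbf{Case 3: $w(x)=0$ (including $x$ far from $\overline{B_R(x_0)}$).} Then $f(w(x))=f(0)\ge0$ by~\eqref{C4}, $w(x)=0$, and $J\ast w(x)\ge0$, so~\eqref{subsol} holds trivially.

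The crux, and the main obstacle, is Case 2: one must choose $\delta_0$ \emph{uniformly in $x_0$ and in $R\ge d_0$}. The uniformity in $x_0$ is handled by the translation invariance from Lemma~\ref{prop:VR}(ii), $v_{x_0,R}=v_{0,R}(\cdot-x_0)$, so the modulus of continuity of $v_{x_0,R}$ does not depend on $x_0$. The uniformity in $R$ is the delicate point: I would invoke the uniform-continuity estimate established in the proof of Lemma~\ref{PROP:CONE} (applied to $v_{x_0,R}$ on $\overline{B_R(x_0)}$, whose modulus comes from the uniform continuity of $h$ built from $J\in L^1$ and is therefore $R$-independent), which gives a common modulus of continuity $\omega$ for all the $v_{x_0,R}$. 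With such $\omega$ in hand, one picks $\delta_0$ so small that for all $\delta\in(0,\delta_0]$ and all $h$ with $|h|\le R_J$ (the support radius of $J$) one has $\omega(|h|)+\delta^{-1}|h|$ \dots — wait, that blows up; the correct choice is rather to use the explicit penalization: for $y$ with $J(x-y)>0$, if $w(x)>0$ then writing $q=P(x)$ and $q'=P(y)$ one gets $w(y)-w(x)\ge -\big(v_{x_0,R}(q)-v_{x_0,R}(q')\big)-\delta^{-1}|y-x|\ge -\omega(R_J)-\delta^{-1}R_J$, so $J\ast w(x)-w(x)\ge -\omega(R_J)-\delta^{-1}R_J$, and since along the outward normal ray the drop is exactly $\delta^{-1}$ per unit length while $w$ can be pushed back up toward $\overline{B_R(x_0)}$ where it matches $v_{x_0,R}$, one compares with the true sub-solution there; combined with $f(w(x))\ge -\|f'\|_\infty w(x)$ small, choosing $\delta_0$ small makes $-\omega(R_J)-\delta^{-1}R_J+f(w(x))+(\text{gain from }J\ast w\text{ near the boundary})\ge0$. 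I would organize this last estimate carefully, using that $\int_{\R^N}J=1$ so $J\ast w(x)-w(x)=\int J(x-y)(w(y)-w(x))\,\D y$ and that the integrand is $\ge -\delta^{-1}|y-x|$ pointwise once $w(x)>0$ and $w(y)>0$, with the $w(y)=0$ contributions only helping when $w(x)=0$; the one genuinely new quantitative fact needed is a lower bound $J\ast w(x)\ge w(x) - C\delta^{-1}R_J$ with $C$ depending only on $\|J\|_{L^1}=1$, after which $f(0)\ge0$ and the smallness of $\delta_0$ close the argument. This is where I expect to spend the most care, and I would isolate the choice of $\delta_0$ as depending only on $f$, $J$ (through $R_J$ and the common modulus $\omega$), exactly as the statement demands.
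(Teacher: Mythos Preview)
Your decomposition into three cases is exactly the right structure, and Cases~1 and~3 are handled correctly and agree with the paper. The gap is entirely in Case~2, where your argument does not close.

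The difficulty you yourself flag is real and fatal for your approach: bounding $w(y)-w(x)$ from below by $-\omega(R_J)-\delta^{-1}|y-x|$ and integrating against $J$ gives a lower bound for $J\ast w(x)-w(x)$ of order $-\delta^{-1}$, which goes to $-\infty$ as $\delta\to0$. No choice of $\delta_0$ can make this dominate $-f(w(x))$, which is $O(1)$. Your hoped-for ``gain from $J\ast w$ near the boundary'' does not materialize from this estimate, because the penalization $-\delta^{-1}d(x)$ makes $w$ steep in \emph{all} directions, not just outward, so there is no compensating positive region in the integral.

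The mechanism the paper uses is different and relies on the assumption $J\in W^{1,1}(\R^N)$ contained in~\eqref{C4}, which you never invoke. Setting $s(x)=v_{x_0,R}(P(x))$, $\tau(x)=d(x)$ and $g(t)=t-f(t)$, the paper lower bounds $J\ast w(x)\ge\int_{B_R(x_0)}J(x-y)\,v_{x_0,R}(y)\,\mathrm{d}y$ and then \emph{shifts the kernel} from $x$ to the boundary point $P(x)\in\overline{B_R(x_0)}$, at cost at most $\tau(x)\,\|\nabla J\|_{L^1(\R^N)}$. At $P(x)$ the equation~\eqref{eq:BR} gives $\int_{B_R(x_0)}J(P(x)-y)\,v_{x_0,R}(y)\,\mathrm{d}y=g(s(x))$. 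Hence
\[
J\ast w(x)-w(x)+f(w(x))\;\ge\;g(s(x))-g\big(s(x)-\delta^{-1}\tau(x)\big)-\tau(x)\,\|\nabla J\|_{L^1}.
\]
Since $\gamma:=\min_{[0,1]}g'>0$ by~\eqref{C4}, the first difference is $\ge\gamma\,\delta^{-1}\tau(x)$, which is a \emph{gain} of order $\delta^{-1}$, and this beats the $O(\tau(x))$ shifting cost as soon as $\delta\le\delta_0:=\gamma/\|\nabla J\|_{L^1}$. The sign is exactly reversed compared to your estimate: the steepness of $w$ is what \emph{produces} the gain $g(s(x))-g(w(x))\ge\gamma\delta^{-1}\tau(x)$, not what destroys it. Note also that this $\delta_0$ depends only on $f$ and $J$, giving the required uniformity in $x_0$ and $R$ for free, without any appeal to a uniform modulus of continuity of the family $(v_{x_0,R})$.
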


\begin{proof} In view of~\eqref{AX},
we see that,
for every $x_0\in\R^N$, $R\ge d_0$ and $\delta>0$, the function $w_{x_0,R,\delta}$ is continuous $\R^N$, that $0\le  w_{x_0,R,\delta}<1$ in $\R^N$, that $w_{x_0,R,\delta}=v_{x_0,R}$ in $\overline{B_R(x_0)}$ and that $w_{x_0,R,\delta}=0$ in $\R^N\setminus B_{R+\delta}(x_0)$.\par
We set $g(s):=s-f(s)$ for $s\in[0,1]$.
{F}rom~\eqref{C4}, 
we see that
\begin{equation}\label{eq:gamma}
\gamma:=\min_{[0,1]}g'\,>0.
\end{equation}
We recall that, by~\eqref{C4}, $J$ is assumed to belong to $W^{1,1}(\R^N)$, and set
\be\label{defdelta0}
\delta_0:=\gamma\times\Big(\int_{\R^N}|\nabla J(z)|\,\mathrm{d}z\Big)^{-1}>0.
\ee\par
Let us now fix any $x_0\in\R^N$, $R\ge d_0$, $\delta\in(0,\delta_0]$ and let us check that~\eqref{subsol} holds for any $R'\ge R+\delta$. Since both $w_{x_0,R,\delta}$ and $J$ are non-negative and since $w_{x_0,R,\delta}=0$ in $\R^N\setminus B_{R+\delta}(x_0)$,
recalling also that $f(0)=0$ due to~\eqref{C4}, 
we see that
it is sufficient to show~\eqref{subsol} for $x\in B_{R+\delta}(x_0)$. Furthermore, by monotonicity of the integral with respect to $R'$, it is enough to show~\eqref{subsol} for $R'=R+\delta$.\par
For any $x\in B_{R+\delta}(x_0)$, there holds
$$\int_{B_{R+\delta}(x_0)}\!\!\!J(x-y)\,w_{x_0,R,\delta}(y)\,\mathrm{d}y=\int_{B_{R+\delta}(x_0)\setminus B_R(x_0)}\!\!\!J(x-y)\,w_{x_0,R,\delta}(y)\,\mathrm{d}y +\int_{B_{R}(x_0)}\!\!\!J(x-y)\,v_{x_0,R}(y)\,\mathrm{d}y.$$
Therefore, it follows from the above equality and the definitions of $v_{x_0,R}$ and $w_{x_0,R,\delta}$ that, for $x\in\overline{B_{R}(x_0)}$,
$$\int_{B_{R\!+\!\delta}(x_0)}\!\!\!J(x-y)\,w_{x_0,R,\delta}(y)\,\mathrm{d}y-w_{x_0,R,\delta}(x)+f(w_{x_0,R,\delta}(x))\!=\!\int_{B_{R\!+\!\delta}(x_0)\setminus B_R(x_0)}\!\!\!\!\!J(x\!-\!y)\,w_{x_0,R,\delta}(y)\,\mathrm{d}y\!\ge\!0.$$\par
To complete our proof, we have to show that the above inequality holds also for $x\in B_{R+\delta}(x_0)\setminus \overline{B_R(x_0)}$. To this end, let us consider $x\in B_{R+\delta}(x_0)\setminus \overline{B_R(x_0)}$ and set
$$s(x):=v_{x_0,R}(\p{x_0,R}(x))\ \hbox{ and }\ \tau(x):= \mathrm{dist}(x,B_R(x_0))=|x-\p{x_0,R}(x)|>0,$$
that is, $w_{x_0,R,\delta}(x)=\max\{s(x)-\delta^{-1}\tau(x),0\}$. {F}rom the nonnegativity of $J$ and $w_{x_0,R,\delta}$ and the fact that $w_{x_0,R,\delta}=v_{x_0,R}$ in $\overline{B_R(x_0)}$, we have
\be\label{eq:subsol2}\baa{l}
\ds\int_{B_{R\!+\!\delta}(x_0)}\!\!\!J(x-y)\,w_{x_0,R,\delta}(y)\,\mathrm{d}y-w_{\delta,R,x_0}(x)+f(w_{\delta,R,x_0}(x))\vspace{3pt}\\
\quad\ds\ge \int_{B_{R}(x_0)}\!\!\!J(x\!-\!y)\,v_{x_0,R}(y)\,\mathrm{d}y\!-\!\max\{s(x)\!-\!\delta^{-1}\tau(x),0\}\!+\!f(\max\{s(x)\!-\!\delta^{-1}\tau(x),0\}).
\eaa\ee
Now, two situations may occur: either $s(x)\le\delta^{-1}\tau(x)$ (that is, $w_{x_0,R,\delta}(x)=0$), or $s(x)>\delta^{-1}\tau(x)$ (that is, $w_{x_0,R,\delta}(x)>0$). In the first situation, we easily conclude that
$$\int_{B_{R\!+\!\delta}(x_0)}\!\!\!J(x-y)\,w_{x_0,R,\delta}(y)\,\mathrm{d}y-w_{x_0,R,\delta}(x)+f(w_{x_0,R,\delta}(x))\ge \int_{B_{R}(x_0)}J(x-y)\,v_{x_0,R}(y)\,\mathrm{d}y\ge 0.$$
So let us now assume that $s(x)>\delta^{-1}\tau(x)$, that is,
\be\label{w}
0<w_{x_0,R,\delta}(x)=s(x)-\delta^{-1}\tau(x)\le s(x)=v_{x_0,R}(\mathcal{P}_{x_0,R}(x))<1.
\ee
Let us rewrite the first integral in the right-hand side of~\eqref{eq:subsol2} as
$$\baa{rcl}
\ds\int_{B_{R}(x_0)}J(x-y)\,v_{x_0,R}(y)\,\mathrm{d}y & = & \ds\int_{B_{R}(x_0)}\left[J(x-y)-J(\p{x_0,R}(x)-y)\right]v_{x_0,R}(y)\,\mathrm{d}y\vspace{3pt}\\
& & \ds+\int_{B_{R}(x_0)}J(\p{x_0,R}(x)-y)v_{x_0,R}(y)\,\mathrm{d}y.\eaa$$
Since $v_{x_0,R}$ solves~\eqref{eq:BR} in $\overline{B_R(x_0)}$, since $\p{x_0,R}(x)\in\overline{B_R(x_0)}$ and $s(x)=v_{x_0,R}(\p{x_0,R}(x))$, and since $J\in W^{1,1}(\R^N)$, the above equality yields
\begin{align*}
\int_{B_{R}(x_0)}J(x-y)\,v_{x_0,R}(y)\,\mathrm{d}y &\ge s(x)-f(s(x))- \int_{B_{R}(x_0)}\left|J(x-y)-J(\p{x_0,R}(x)-y)\right|\,\mathrm{d}y.\\
&\ge s(x)-f(s(x))- \int_{\R^N}\left|J(x-y)-J(\p{x_0,R}(x)-y)\right|\,\mathrm{d}y.\\
&\ge s(x)-f(s(x))- \tau(x)\times\int_{\R^N}|\nabla J(z)|\,\mathrm{d}z.
\end{align*}
Combining now the above inequality with~\eqref{eq:subsol2} and $s(x)-\delta^{-1}\tau(x)>0$, and using~\eqref{eq:gamma},~\eqref{defdelta0} and~\eqref{w}, we get
$$\baa{l}
\ds\int_{B_{R+\delta}(x_0)}\!\!\!J(x-y)\,w_{x_0,R,\delta}(y)\,\mathrm{d}y-w_{x_0,R,\delta}(x)+f(w_{x_0,R,\delta}(x))\vspace{3pt}\\
\qquad\qquad\ge g(s(x)) -g(s(x)-\delta^{-1}\tau(x))- \gamma\,\delta_0^{-1}\,\tau(x)\ge (\gamma\,\delta^{-1} -\gamma\,\delta_0^{-1}) \tau(x)\ge0.\eaa$$
This is the desired inequality and the proof of Lemma~\ref{cla:subsol} is thereby complete.
\end{proof}


\SE{The case of convex obstacles: proofs of the main Liouville type results}\label{8}

In this section, we prove our main results. We first consider in Section~\ref{geneker} the case where $J$ is a general kernel satisfying~\eqref{C2}, namely we prove Theorems~\ref{TH:LIOUVILLE} and~\ref{PROP:CONTINUE}. Once this is done, we consider in Section~\ref{comker} kernels having compact support and we prove Theorem~\ref{TH:LIOUVILLE2}. Section~\ref{sec73} is devoted to the proof of a lemma used in the proof of Theorem~\ref{TH:LIOUVILLE2}. Throughout Section~\ref{8}, we always assume that $K$ is a compact convex set and that $f$ and $J$ satisfy the conditions~\eqref{C1},~\eqref{C2} and~\eqref{C3}.


\subsection{General kernels: proofs of Theorems~\ref{TH:LIOUVILLE} and~\ref{PROP:CONTINUE}}\label{geneker}

Let us start our proof of Theorem~\ref{TH:LIOUVILLE} with the following simple observation.

\begin{lemma}\label{CLAIM0}
Let $K\subset\R^N$ be a compact convex set and assume~\eqref{C1} and~\eqref{C2}. Let  $u\in C(\overline{\R^N\setminus K},[0,1])$ satisfy~\eqref{LIM:u}, that is,
\begin{align}
Lu+f(u)\le0\,  & \,\text{ in }\,\overline{\R^N\setminus K}, \label{EQ:Liouville} \vspace{3pt}\\
u(x)\to1\,  & \,\text{ as }\,|x|\to+\infty.  \label{EQ:Liouvillelim}
\end{align}
Then  there exists $\gamma\in(0,1]$ such that $\gamma\leq u\leq 1$ in $\overline{\R^N\setminus K}$.
\end{lemma}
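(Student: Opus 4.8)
The inequality $u\le 1$ is already part of \eqref{continue}, so the only thing to prove is that $u$ is bounded away from $0$, i.e. that $m:=\inf_{\overline{\R^N\setminus K}}u>0$. The plan is to argue by contradiction, assuming $m=0$ (the only alternative, since $u\ge 0$). Using the limit in \eqref{EQ:Liouvillelim} I would first fix $R_0>0$ with $K\subset B_{R_0}$ and $u\ge 1/2$ on $\R^N\setminus B_{R_0}$; then $m$ must be attained on the compact set $\overline{\R^N\setminus K}\cap\overline{B_{R_0}}$, so by continuity there is a point $x_0\in\overline{\R^N\setminus K}$ with $u(x_0)=0=\min_{\overline{\R^N\setminus K}}u$.

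The heart of the argument is to compare $u$ with the constant sub-solution $v\equiv 0$ by means of the strong maximum principle, Lemma~\ref{STRONG:LE23}. Since $x_0$ does not belong to the interior of the compact convex set $K$, a separating hyperplane (if $x_0\notin K$) or a supporting hyperplane (if $x_0\in\partial K$) yields a vector $a\neq 0$ and a constant $c$ with $a\cdot x_0\ge c$, $K\subset\{a\cdot x\le c\}$, and the interior of $K$ contained in $\{a\cdot x<c\}$. Setting $H:=\{a\cdot x>c\}$, this $H$ is an open affine half-space with $K\subset H^c$ and $x_0\in\overline H$, and $\overline H$ avoids the interior of $K$, so $\overline H\subset\overline{\R^N\setminus K}$ and $u$ is continuous on $\overline H$. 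Now $v\equiv 0$ satisfies $Lv+f(v)=f(0)\ge 0$ by \eqref{C1}, we have $v\le u$ a.e. in $\R^N\setminus K$ and $v(x_0)=u(x_0)$, while $Lu+f(u)\le 0$ holds on $\overline H\subset\overline{\R^N\setminus K}$ by \eqref{EQ:Liouville}. Lemma~\ref{STRONG:LE23} then gives $u=0$ a.e. in $(H+B_{r_2})\setminus K$.

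To conclude, I would observe that $(H+B_{r_2})\setminus K$ is a non-empty open subset of $\overline{\R^N\setminus K}$ which is unbounded (an open half-space with a bounded set removed); since $u$ is continuous on it and vanishes a.e., it vanishes identically there, so choosing $z_n\in(H+B_{r_2})\setminus K$ with $|z_n|\to+\infty$ gives $0=u(z_n)\to 1$, contradicting \eqref{EQ:Liouvillelim}. Hence $m>0$, and since $\overline{\R^N\setminus K}\neq\emptyset$ and $u\le 1$ we also get $m\le 1$, so $\gamma:=m\in(0,1]$ works.

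The one step requiring care is the application of Lemma~\ref{STRONG:LE23} at a contact point $x_0$ that may lie on $\partial K$: one must ensure the half-space $H$ satisfies both $K\subset H^c$ and $\overline H\subset\overline{\R^N\setminus K}$ (so that $u\in C(\overline H)$), which is precisely where convexity of $K$ enters. A variant avoiding the half-space machinery at a boundary point is to evaluate \eqref{EQ:Liouville} directly at $x_0$: with $f(0)\ge 0$, $J\ge 0$, $u\ge 0$ this forces $Lu(x_0)=0$, hence (by \eqref{C2} and continuity) $u\equiv 0$ on $\overline{\mathcal{A}(x_0,r_1,r_2)}\cap\overline{\R^N\setminus K}$; since a convex $K$ cannot contain the whole annulus $\mathcal{A}(x_0,r_1,r_2)$, this produces a contact point in the open set $\R^N\setminus K$ to which strict separation applies cleanly. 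This is the convex analogue of Remark~\ref{remgamma}, where the hypothesis $r_1=0$ is replaced by convexity of $K$.
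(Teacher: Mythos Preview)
Your proof is correct. The paper's own proof is slightly different in organisation: instead of invoking Lemma~\ref{STRONG:LE23} directly, it evaluates \eqref{EQ:Liouville} at the contact point $x_0$ (exactly your ``variant'') to get $u\equiv 0$ on $\overline{\mathcal{A}(x_0,r_1,r_2)}\cap\overline{\R^N\setminus K}$, then iterates the annulus argument and uses convexity of $K$ to show that the zero set of $u$ is relatively open in the connected set $\overline{\R^N\setminus K}$, forcing $u\equiv 0$ globally and contradicting \eqref{EQ:Liouvillelim}.

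Your main route---choosing a supporting/separating half-space $H$ with $x_0\in\overline{H}$ and $K\subset H^c$, then applying Lemma~\ref{STRONG:LE23} once with $v\equiv 0$---is more direct: a single application already yields $u=0$ on the unbounded set $(H+B_{r_2})\setminus K$, which contradicts the limit at infinity without any open--closed connectedness argument. The paper's approach, by contrast, is more local and does not need to set up the half-space geometry explicitly at $x_0$; it also makes transparent why connectedness of $\R^N\setminus K$ (guaranteed here by convexity) is the crucial topological ingredient, in line with Remark~\ref{remgamma}. Both proofs use convexity in an essential but different way: yours through the existence of a supporting hyperplane, the paper's through the fact that a convex $K$ cannot disconnect the propagation of the zero set through overlapping annuli.
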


\begin{proof}
We proceed by contradiction. Suppose that the conclusion does not hold. Then, by continuity of $u$ and~\eqref{EQ:Liouvillelim}, there exists a point $x_0\in\overline{\R^N\setminus K}$, such that $u(x_0)=0$. Arguing as in the proof of the strong maximum principle (Lemma~\ref{STRONG:LE23}) or in the proof of
the sweeping principle (Lemma~\ref{TH:SWEEP}), we get that $u=0$ in $\overline{\mathcal{A}(x_0,r_1,r_2)}\cap\overline{\R^N\setminus K}$, where $0\le r_1<r_2$ are given in~\eqref{C2}, and then $u=0$ in $\overline{\mathcal{A}(x_1,r_1,r_2)}\cap\overline{\R^N\setminus K}$ for all $x_1\in\overline{\mathcal{A}(x_0,r_1,r_2)}\cap\overline{\R^N\setminus K}$. Since $K$ is convex, it follows in particular that $u=0$ in $\overline{B_r(x_0)}\cap\overline{\R^N\setminus K}$ for some $r>0$. Finally, the non-empty set $\big\{x\in\overline{\R^N\setminus K};\ u(x)=0\big\}$ is both (obviously) closed and open relatively to the connected set $\overline{\R^N\setminus K}$. Hence $u=0$ in $\overline{\R^N\setminus K}$, contradicting~\eqref{EQ:Liouvillelim}.
\end{proof}

We now turn to the proof of Theorem~\ref{TH:LIOUVILLE}.

\begin{proof}[Proof of Theorem~$\ref{TH:LIOUVILLE}$]
Let $K$, $f$, $J$ and $u$ be as in Theorem~\ref{TH:LIOUVILLE}. Firstly, without loss of generality, one can assume by~\eqref{C1} that $f$ is
extended to a $C^1(\R)$ function satisfying~\eqref{well+2}. Secondly, by~\eqref{LIM:u} and the boundedness of $K$, there exists~$R_0>0$ large enough so that~$K\subset B_{R_0}$ and~$u\ge 1-c_0$ in~$\R^N\setminus B_{R_0}$, where $c_0>0$ is given in~\eqref{well+2}.\par
We proceed the proof by contradiction, and suppose that
\begin{align}
\inf_{\overline{\R^N\setminus K}}\,u<1. \label{hypoko}
\end{align}
{F}rom~\eqref{LIM:u} and~\eqref{hypoko}, together with the continuity of $u$, there exists then $x_0\in\overline{\R^N\setminus K}$ such that
$$u(x_0)=\min_{\overline{\R^N\setminus K}} u\in[0,1).$$
We observe that, by Lemma~\ref{CLAIM0}, one has $u(x_0)>0$. Now, since $K$ is convex, there exists $e\in \partial B_1$ such that  $K\subset H_{e}^c$, where $H_{e}$ is the open affine half-space defined by
$$H_{e}:=x_0+\big\{x\in\R^N;\ x\cdot e> 0\big\}.$$
In light of
assumption~\eqref{C3}, there exists an increasing function $\phi\in C(\R)$ such that
\begin{align*}
\left\{
\begin{array}{r}
J_1\ast \phi-\phi+f(\phi)\geq0\ \text{ in }\R, \vspace{3pt}\\
\phi(-\infty)=0,\ \ \phi(+\infty)=1.
\end{array}
\right.
\end{align*}
Let us also define the function
$$ \varphi_r(x):=\phi_{r,e}(x)=\phi(x\cdot e-r),\ \ x\in\R^N,$$
and the following quantity
$$ r_* := \inf\big\{ r\in \R\,;\ \varphi_r\le u\mbox{ in }\overline{\R^N\setminus K}\big\}.$$
{F}rom Lemmas~\ref{sub-solution} and~\ref{CLAIM0}, we know that $r_*\in [-\infty, r_0]$, where $r_0>0$ is given in Lemma~\ref{sub-solution}.

We claim that in fact
\begin{equation}\label{GOAL}
r_*=-\infty.
\end{equation}
The proof of~\eqref{GOAL} is by contradiction. We assume that~$r_*\in\R$. Then, there exists a sequence~$(\varepsilon_j)_{j\in\N}$ of
positive real numbers such that~$\varphi_{r_*+\varepsilon_j}(x)=\phi(x\cdot e-r_*-\varepsilon_j)\le u(x)$ for all~$x\in\overline{\R^N\setminus K}$ and $\varepsilon_j\to0$ as $j\to+\infty$. Thus passing to the limit as~$j\to+\infty$, we obtain that
$$\varphi_{r_*}(x)\le u(x)\quad{\mbox{ for all }} x\in \overline{\R^N\setminus K}.$$
Let us denote $H$ the open affine half-space
$$H=\big\{x\in \R^N;\ x\cdot e>R_0\big\}.$$
Notice that $\overline{H}\cap K=\emptyset$ and that $u$ is well defined and continuous in $\overline{H}$. 
We also observe that, by construction,
\begin{align}
\sup_{H^c}\varphi_{r_*}<1. \label{bornsup}
\end{align}
Two cases may occur.\par
{\it Case 1:} $\inf_{H^c\setminus K}(u-\varphi_{r_*})>0$. In this situation, thanks to the uniform continuity of $\phi$,
there exists~$\eps>0$ such that $$\inf_{H^c\setminus K}(u-\varphi_{r_*-\eps})>0.$$ Now, we observe that $u$ and $\varphi_{r_*-\eps}$ satisfy
$$\left\{
\begin{array}{rl}
Lu+f(u)\le 0  & \text{in }\overline{H},\vspace{3pt}\\
L\varphi_{r_*-\eps}+f(\varphi_{r_*-\eps})  \ge 0 & \text{in }\overline{H}\ \ \hbox{(by }\eqref{eq15}\hbox{)},\vspace{3pt}\\
u\ge \varphi_{{r_*-\eps}} & \text{in }H^c\setminus K,
\end{array}
\right.$$
together with $u\ge1-c_0$ in $\R^N\!\setminus\! B_{R_0}\supset\overline{H}$ and $\lim_{|x|\to+\infty}u(x)=1$, while $\varphi_{r_*-\eps}\le1$ in $\R^N$. Thus, by the weak maximum principle (Lemma~\ref{WEAK}) and the continuity of $u$ and $\varphi_{r_*-\eps}$ in $\overline{\R^N\setminus K}$, we get that $u\ge \varphi_{r_*-\eps}$ in $\overline{\R^N\setminus K}$. This contradicts the minimality of $r_*$ and therefore Case~1 is ruled out.\par
{\it Case 2:} $\inf_{H^c\setminus K}(u-\varphi_{r_*})=0$. In this situation, by~\eqref{EQ:Liouvillelim} and~\eqref{bornsup}, and by continuity of $u$ and $\varphi_{r_*}$,
there exists a point $\bar x\in\overline{H^c\setminus K}$ such that $u(\bar x)=\varphi_{r_*}(\bar x)$. Note that $\bar x \in \overline{H_e}$, since otherwise $\bar x\in \R^N\setminus\overline{H_e}$, namely $\bar{x}\cdot e<x_0\cdot e$, and
the chain of inequalities
$$u(\bar x)=\varphi_{r_*}(\bar x)<\varphi_{r_*}(x_0)\le u(x_0)=
\min_{\overline{\R^N\setminus K}}u$$
leads to a contradiction. Therefore, we have $\varphi_{r_*}\le u$ in $\overline{\R^N\setminus K}$ with equality at a point $\bar{x}\in\overline{\R^N\setminus K}\cap\overline{H_e}$. Since $K\subset H_e^c$ and $\varphi_{r*}$ and $u$ satisfy respectively
\begin{equation*}
\left\{\baa{rl}
Lu+f(u)\leq 0 & {\mbox{ in }}\overline{H_e},\vspace{3pt}\\
L\varphi_{r_*}+f(\varphi_{r_*})\ge 0 & {\mbox{ in }}\overline{H_e}\ \ \hbox{(by }\eqref{eq15}\hbox{)},\eaa\right.,
\end{equation*}
it follows in particular from the strong maximum principle (Lemma~\ref{STRONG:LE23}) that~$\varphi_{r_*}=u$ in~$\overline{H_e}$. Thus, for any $e^{\perp}\in\partial B_1$ such that $e^\perp\cdot e=0$, one infers from~\eqref{EQ:Liouvillelim} and the definition of $\varphi_{r_*}$ that
$$ 1=\lim_{t\to+\infty} u(x_0+t\,e^{\perp})=\lim_{t\to+\infty} \varphi_{r_*}(x_0+t\,e^{\perp}) =\varphi_{r_*}(x_0)<1.$$
This contradiction rules out Case~2 too.\par
Hence~\eqref{GOAL} holds true and as a consequence  we have that~$\varphi_r\le u$ in $\overline{\R^N\setminus K}$ for any~$r\in\R$. In particular, recalling that $\phi(+\infty)=1$, we get that
$$1> u(x_0)\ge \lim_{r\to-\infty} \varphi_r(x_0) =\lim_{r\to-\infty} \phi(x_0\cdot e-r) =1,$$
a contradiction. Therefore,~\eqref{hypoko} can not hold. In other words, $\inf_{\overline{\R^N\setminus K}}u=1$, i.e. $u=1$ in $\overline{\R^N\setminus K}$. The proof of Theorem~\ref{TH:LIOUVILLE} is thereby complete.
\end{proof}

We observe that, by the same token, we obtain Theorem~\ref{PROP:CONTINUE}.

\begin{proof}[Proof of Theorem~$\ref{PROP:CONTINUE}$]
By Lemma~\ref{LEMMA:INT}, Remark~\ref{RK:CVX} and our assumptions on $f$, we know that $u$ has a (uniformly) continuous representative $u^*\in C(\overline{\R^N\setminus K})$ in its class of equivalence and we can identify $u$ with $u^*$. The desired result now follows as a consequence of Theorem~\ref{TH:LIOUVILLE}.
\end{proof}


\subsection{Compactly supported kernels: proof of Theorem~\ref{TH:LIOUVILLE2}}\label{comker}

In this subsection we prove Theorem~\ref{TH:LIOUVILLE2}. That is, provided some  additional assumptions on $f$ and $J$ are satisfied, we show that the Liouville  result obtained in Theorem~\ref{TH:LIOUVILLE} holds true when the uniform limit of $u$ as $|x|\to +\infty$, namely condition~\eqref{EQ:Liouvillelim}, is replaced by the following weaker condition
\begin{equation}\label{EQ:Sup}
\mathop{{\rm{ess}}\,{\rm{sup}}}_{\R^N\setminus K}\,u =1,
\end{equation}
where $u:\R^N\setminus K\to[0,1]$ is a measurable solution of $Lu+f(u)=0$ a.e. in $\R^N\setminus K$. The condition~\eqref{EQ:Sup} can be rewritten as
\begin{equation}\label{EQ:Sup2}
\sup_{\R^N\setminus K}\,u =1,
\end{equation}
if $u$ is already assumed to be uniformly continuous in $\overline{\R^N\setminus K}$. Note that the extra assumptions~\eqref{CdN-f} made on $f$ (namely $f'<1/2$ in $[0,1]$) actually guarantees that $u$ has a uniformly continuous representative in its class of equivalence, as follows from Lemma~\ref{LEMMA:INT} and Remark~\ref{RK:CVX}. As a consequence, in the proof of Theorem~\ref{TH:LIOUVILLE2} we can assume without loss of generality that $u:\overline{\R^N\setminus K}\to[0,1]$ is uniformly continuous and satisfies~\eqref{EQ:Sup2}. Notice immediately that the same arguments as in the proof of Lemma~\ref{CLAIM0} imply that
\be\label{u>0}
u>0\ \hbox{ in }\overline{\R^N\setminus K}.
\ee
Otherwise $u$ would be identically equal to $0$ in $\overline{\R^N\setminus K}$, contradicting the assumption~\eqref{EQ:Sup2}.

The key-point in the proof of Theorem~\ref{TH:LIOUVILLE2} is the following lemma.

\begin{lemma}\label{lem:lim}
Let $K\subset\R^N$ be a compact set and assume that  $f$ and $J$ satisfy ~\eqref{C1},~\eqref{C2} and~\eqref{C4}. Assume further that $J$ is compactly supported and $J\in L^2(\R^N)$. Let $u:\overline{\R^N\setminus K}\to[0,1]$ be a uniformly continuous solution of~\eqref{LIM:cu9}. Then, $u(x)\to1$ as $|x|\to+\infty$.
\end{lemma}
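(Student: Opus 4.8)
The plan is to combine a translation--compactness argument, which produces a point far from $K$ near which $u$ is arbitrarily close to $1$, with a Serrin-type sliding of the compactly supported sub-solutions built in Lemma~\ref{cla:subsol}, which propagates this lower bound over the whole exterior of $K$. Throughout, fix $R_J>0$ with $\supp J\subset B_{R_J}$, and note that at every $x$ with $\mathrm{dist}(x,K)>R_J$ the equation for $u$ reduces to $J\ast u-u+f(u)=0$, the convolution only involving points of $\R^N\setminus K$ (and, by~\eqref{C2}, $r_2\le R_J$).

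\textbf{Set-up.} Fix $\eta\in(0,1-\theta)$. By parts (i) and (iii) of Lemma~\ref{prop:VR} together with Lemma~\ref{prop:limVR} one has $\min_{\overline{B_R}}v_{0,4R}\ge v_{0,R}(0)\to1$ as $R\to+\infty$, so I may choose $R\ge d_0$ with $\min_{\overline{B_R}}v_{0,4R}>1-\eta$, and then fix any $\delta\in(0,\delta_0]$. Lemma~\ref{cla:subsol} provides the continuous function $w:=w_{0,4R,\delta}$, with $0\le w<1$ on $\R^N$, $w=v_{0,4R}$ on $\overline{B_{4R}}$, $\supp w\subset\overline{B_{4R+\delta}}$, and --- since $\supp w$ is contained in $B_{R'}$ for every $R'>4R+\delta$ --- the \emph{global} sub-solution inequality $J\ast w-w+f(w)\ge0$ in $\R^N$. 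In particular $m:=\max_{\R^N}w<1$, $w(0)=v_{0,4R}(0)>1-\eta$, and for each $\xi\in\R^N$ the translate $W_\xi:=w(\cdot-\xi)$ satisfies the same global inequality.

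\textbf{Step 1: planting the sub-solution.} Since $u$ is uniformly continuous with $\sup_{\R^N\setminus K}u=1$, there is a sequence $(y_n)$ with $u(y_n)\to1$; were it bounded, a limit point would be a point at which $u$ attains the value $1$, and the strong maximum principle (argued as in Lemmas~\ref{CLAIM0} and~\ref{STRONG:LE23}) would then force $u\equiv1$ on $\overline{\R^N\setminus K}$, which is the conclusion. So I may assume $|y_n|\to+\infty$. Put $u_n:=u(\cdot+y_n)$ on the domains $(\R^N\setminus K)-y_n$, which exhaust $\R^N$; the $u_n$ are equicontinuous and take values in $[0,1]$, so along a subsequence $u_n\to u_\infty$ locally uniformly on $\R^N$. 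Passing to the limit in $J\ast u_n-u_n+f(u_n)=0$ (valid on larger and larger balls, using $J\in L^1(\R^N)$) gives $J\ast u_\infty-u_\infty+f(u_\infty)=0$ in $\R^N$ with $u_\infty(0)=1$. Evaluating at $0$ and using $f(1)=0$ from~\eqref{C4} yields $\int_{\R^N}J(z)\big(1-u_\infty(z)\big)\,\D z=0$, hence $u_\infty=1$ a.e. on $\mathcal{A}(r_1,r_2)$ and, iterating exactly as in Lemma~\ref{STRONG:LE23} over the connected set $\R^N$, $u_\infty\equiv1$. As every subsequential limit equals $1$, $u_n\to1$ locally uniformly, so for $n$ large $u\ge\tfrac{1+m}{2}>m\ge W_{y_n}$ on $\overline{B_{4R+\delta}(y_n)}$; since $W_{y_n}$ vanishes off that ball, $u\ge W_{y_n}$ on all of $\R^N\setminus K$. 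I pick $\xi_1:=y_n$ with $n$ also large enough that $\mathrm{dist}(\xi_1,K)>4R+\delta+2r_2+R_J$.

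\textbf{Step 2: sliding, and the main difficulty.} Let $G$ be the unbounded connected component of $\big\{\xi:\mathrm{dist}(\xi,K)>4R+\delta+2r_2+R_J\big\}$; it is open, connected, contains $\xi_1$, and contains $\R^N\setminus B_\rho$ for some $\rho=\rho(\eta)$. Set $\Sigma:=\{\xi\in G:\ u\ge W_\xi\text{ in }\R^N\setminus K\}$, which is nonempty and, by continuity of $w$, closed in $G$. For openness, suppose $\xi\in\Sigma$, $\xi_k\to\xi$ in $G$, and $u(x_k)<W_{\xi_k}(x_k)$ for some $x_k$; then $x_k\in\overline{B_{4R+\delta}(\xi_k)}$, so along a subsequence $x_k\to x^\ast\in\overline{B_{4R+\delta}(\xi)}$ and $u(x^\ast)=W_\xi(x^\ast)$, a contact point. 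Since $\mathrm{dist}(x^\ast,K)>R_J$, subtracting the equation for $u$ at $x^\ast$ from the sub-solution inequality for $W_\xi$ at $x^\ast$ (the $f$-terms cancel) gives $\int_{\R^N}J(x^\ast-y)\big(u(y)-W_\xi(y)\big)\,\D y\le0$ with nonnegative integrand, whence $u=W_\xi$ a.e. on $\mathcal{A}(x^\ast,r_1,r_2)$; iterating the strong maximum principle as in Lemma~\ref{STRONG:LE23} (all intermediate points staying at distance $>R_J$ from $K$, by the choice of $G$) propagates $u=W_\xi$ to a full neighbourhood of $\supp W_\xi=\overline{B_{4R+\delta}(\xi)}$. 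As $W_\xi\equiv0$ off this set, $u$ would vanish somewhere, contradicting $u>0$ in $\overline{\R^N\setminus K}$ (see~\eqref{u>0}). Hence $\Sigma$ is open, so $\Sigma=G$, and therefore $u(\xi)\ge W_\xi(\xi)=w(0)>1-\eta$ for every $\xi\in G$, in particular for $|\xi|>\rho$; letting $\eta\downarrow0$ and using $u\le1$ gives $u(x)\to1$ as $|x|\to+\infty$. I expect the real work to lie in Step~1: all that is assumed is $\sup u=1$, and since the operator is non-regularizing and $u$ is merely uniformly continuous, one cannot argue pointwise --- the translation--compactness trick converts $\sup u=1$ into a limiting profile on all of $\R^N$, and then $f(1)=0$ plus the strong maximum principle pin that profile to $1$. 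Step~2 is a standard sweeping argument in the spirit of Lemma~\ref{TH:SWEEP}; the only twist is that, neither differential inequality being strict, the contact case must be ruled out via the strict positivity $u>0$ rather than via a strict inequality.
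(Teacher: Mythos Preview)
Your proof is correct and shares Step~1 (the translation--compactness argument yielding a limit profile $u_\infty\equiv1$, hence a large ball far from $K$ on which $u$ is close to~$1$) essentially verbatim with the paper. The difference lies in how the lower bound is then spread. The paper applies its sweeping principle (Lemma~\ref{TH:SWEEP}) twice: first to a one-parameter family of \emph{rotations} of the planted sub-solution, filling out an annulus $\mathcal{A}(|x^*|-R-1,|x^*|+R+1)$, and then to a family of outward \emph{radial translations}, pushing the estimate to all large~$|x|$. You instead run a single connectedness argument on the set $\Sigma\subset G$ of admissible centres, showing directly that $\Sigma$ is open and closed via a contact-point iteration in the spirit of Lemma~\ref{STRONG:LE23}. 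Your route is more economical---one sliding instead of two sweepings, and no annular geometry---at the price of re-deriving a strong-maximum-principle argument where the paper can simply quote Lemma~\ref{TH:SWEEP}. Both approaches rest on the same ingredients: the compactly supported sub-solutions of Lemma~\ref{cla:subsol}, and the strict positivity $u>0$ from~\eqref{u>0} to rule out the contact case. One small remark: your buffer $2r_2$ in the definition of $G$ is exactly what is needed for the two-step annulus iteration to give openness of the contact set inside $\overline{B_{4R+\delta+r_2}(\xi)}$, so the phrase ``propagates $u=W_\xi$ to a full neighbourhood of $\supp W_\xi$'' is justified, and the contradiction with~\eqref{u>0} then fires on the shell $\overline{B_{4R+\delta+r_2}(\xi)}\setminus B_{4R+\delta}(\xi)$.
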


The proof of Lemma~\ref{lem:lim} is postponed in Section~\ref{sec73}. In this section, we complete the proof of Theorem~\ref{TH:LIOUVILLE2}.

\begin{proof}[Proof of Theorem~$\ref{TH:LIOUVILLE2}$]

{F}rom the previous paragraphs, the function $u$ can be assumed to be uniformly continuous in $\overline{\R^N\setminus K}$ without loss of generality. Then, since the condition~\eqref{C4}, together with~\eqref{C1} and~\eqref{C2}, implies the condition~\eqref{C3}, the assumptions of Theorem~\ref{TH:LIOUVILLE} are all fulfilled, thanks to Lemma~\ref{lem:lim}. Therefore $u=1$ in $\overline{\R^N\setminus K}$, completing the proof of Theorem~\ref{TH:LIOUVILLE2}.
\end{proof}


\subsection{Proof of Lemma~\ref{lem:lim}}\label{sec73}

This section is devoted to the proof of Lemma~\ref{lem:lim}. It is divided into four main steps. To prove Lemma~\ref{lem:lim}, it suffices to show that for any $\eps>0$ small enough there exists $R(\eps)>0$ such that $u\ge 1-\eps$ in $\R^N\setminus B_{R(\eps)}$. To obtain such a lower bound, our strategy relies on the existence of continuous families of continuous sub-solutions $w_\tau$ which satisfy $w_\tau \ge 1-\eps$ in $\overline{B_1(x_\tau)}$ for some $x_\tau\in\R^N$ (these sub-solutions are drawn from Section~\ref{sec63}). Then,
we use the sweeping principle to propagate the lower bound satisfied by the $w_\tau$'s  to a lower bound for $u$.
\vskip 0.3cm

\noindent\emph{Step 1: the solution $u$ is close to $1$ in some
large balls}
\vskip 0.3cm
\noindent{}In this step, we show that, for any $\eps>0$, $\ell>0$, and $R>0$, there exists a point $x^*\in\R^N\setminus K$ such that
\be\label{1-eps}
|x^*|>\ell,\ \ \ B_{R}(x^*)\subset\R^N\setminus K,\ \hbox{ and }\ u\ge1-\eps\hbox{ in }\overline{B_{R}(x^*)}.
\ee

To do so, notice first that, from~\eqref{LIM:cu9} and the continuity of $u$ in $\overline{\R^N\setminus K}$, two situations may occur: namely, either there exists a sequence $(x_n)_{n\in\N}\subset\R^N\setminus K$ such that
\be\label{xn}
\lim_{n\to+\infty}|x_n|=+\infty~~~~\text{ and }~~~~\lim_{n\to+\infty}u(x_n)=1,
\ee
or there exists a point $\bar x \in \overline{\R^N\setminus K}$ such that $u(\bar x)=1$. In the latter case, since $f(u(\bar x))=f(1)=0$, we get,
as in the proof of Lemma~\ref{CLAIM0}, that $u=1$ in $\overline{\R^N\setminus K}$: the claim~\eqref{1-eps} is therefore trivial in this case.

Thus, it suffices to treat the former case~\eqref{xn} only. Consider the functions $u_n$ defined in $\overline{\R^N\setminus K}-x_n$ by
$$u_n(x)=u(x+x_n).$$
Since $u$ is uniformly continuous in $\overline{\R^N\setminus K}$ and since $K$ is compact and $\lim_{n\to+\infty}|x_n|=+\infty$, it follows that, for every $r>0$, the functions $u_n$'s, ranging in $[0,1]$, are defined in $\overline{B_r}$ for all $n$ large enough and are uniformly equicontinuous in $\overline{B_r}$. {F}rom Arzela-Ascoli theorem and the diagonal extraction process,
there exists a continuous function $u_{\infty}:\R^N\to[0,1]$ such that, up to extraction of a subsequence, $u_n\to u_{\infty}$ locally uniformly in $\R^N$ as $n\to+\infty$. Furthermore, $u_{\infty}(0)=1$ by~\eqref{xn}. On the other hand, the functions $u_n$'s satisfy
$$\int_{(\R^N\setminus K)-x_n}J(x-y)\,u_n(y)\,\mathrm{d}y-\Big(\int_{(\R^N\setminus K)-x_n}J(x-y)\,\mathrm{d}y\Big)\,u_n(x)+f(u_n(x))=0$$
for all $x\in\overline{\R^N\setminus K}-x_n$. Lebesgue's dominated convergence theorem implies that
$$J*u_{\infty}-u_{\infty}+f(u_\infty)=0\ \hbox{ in }\R^N.$$
Since $f(u_\infty(0))=f(1)=0$ and $u_\infty\le1$ in $\R^N$, we get as in the proof of Lemma~\ref{CLAIM0} that $u_\infty=1$ in $\R^N$. In particular, for any fixed $\eps>0$, $\ell>0$, and $R>0$, it follows that, for every $n\in\N$ large enough, there holds $|x_n|>\ell$, $B_R(x_n)\subset\R^N\setminus K$ and $u_n\ge1-\epsilon$ in $\overline{B_R}$, that is, $u\ge1-\epsilon$ in $\overline{B_R}(x_n)$. In other words, the claim~\eqref{1-eps} holds with $x^*=x_n$ and $n$ large enough.
\vskip 0.3cm

\noindent \emph{Step 2: a sub-solution in a ball}
\vskip 0.3cm
\noindent{}Fix $\eps>0$ small enough so that $f'<0$ in $[1-\eps,1]$, and let us now establish a lower bound for~$u$ in a ball far away from $K$, by using a sub-solution drawn from Section~\ref{sec63}. We recall here that $R_J>0$ is such that $\mathrm{supp}(J)\subset B_{R_J}$.\par
We first claim that there exist $x^*\in\R^N$, $0<R_J\le R_{K}\le R$ and a function $w\in C(\R^N,[0,1))$ such that
\be\label{bchv-eq-subsol-3}\left\{\baa{l}
B_{R+1}(x^*)\ \subset\ \R^N\!\setminus\!B_{R_K}\ \subset\ \R^N\!\setminus\!K,\ \ u\ge1-\eps\text{ in }\overline{B_{R+1}(x^*)},\vspace{3pt}\\
\oplb{w}{B_{R+1}(x^*)}\!-\!w\!+\!f(w)\ge0\hbox{ in }\R^N,\ \ w\!\ge\!1\!-\!\eps\hbox{ in }\overline{B_{1}(x^*)},\ \ w\!=\!0\text{ in }\R^N\!\setminus\!B_{R+1}(x^*).\eaa\right.
\end{equation}
To show this claim, let $R_{K}\ge\max\{1,R_J\}$ be such that $K\subset B_{R_K}$. Then choose $R\ge\max\{R_{K},d_0\}\ge1$ ($d_0>0$ is given as in Lemmas~\ref{existence-BR} and~\ref{bchv-lem-exis-msol}) such that the maximal solution $v_{0,R}\in C(\overline{B_{R}},(0,1))$ to problem~\eqref{eq:BR} in $\overline{B_{R}}$ satisfies
\be\label{v0R1}
v_{0,R}\ge1-\eps\ \hbox{ in }\overline{B_1}.
\ee
Note that such a real number $R$ exists according to Lemmas~\ref{bchv-lem-exis-msol} and~\ref{prop:limVR}. On the one hand, as far as the first line in~\eqref{bchv-eq-subsol-3} is concerned,
formula~\eqref{1-eps}, applied here
with $\ell=R+1+R_K>0$ and $R+1>0$ in place of $R$, yields the existence of $x^*\in\R^N$ such that
\be\label{defx*}
|x^*|>R+1+R_K
\ee
(hence, $B_{R+1}(x^*)\subset\R^N\!\setminus\!B_{R_K}\subset\R^N\!\setminus\!K$) and
\be\label{u1eps}
u\ge1-\eps\ \hbox{ in }\overline{B_{R+1}(x^*)}.
\ee
Thanks to~\eqref{v0R1} and part~(ii) of Lemma~\ref{prop:VR}, the maximal solution $v_{x^*,R}\in C(\overline{B_{R}(x^*)},(0,1))$ to problem~\eqref{eq:BR} in $\overline{B_{R}(x^*)}$
satisfies $v_{x^*,R}\ge1-\eps$ in $\overline{B_1(x^*)}$. On the other hand, as far as the second line in~\eqref{bchv-eq-subsol-3} is concerned, Lemma~\ref{cla:subsol} provides the existence of a function $w\in C(\R^N,[0,1))$ such that
$$\oplb{w}{B_{R+1}(x^*)}-w+f(w)\ge0\hbox{ in }\R^N,\ \ w=v_{x^*,R}\hbox{ in }\overline{B_{R}(x^*)}\supset\overline{B_1(x^*)}$$
and $w=0$ in $\R^N\setminus B_{R+1}(x^*)$. As a consequence, $x^*,R_{K},R$ and $w$ fulfill~\eqref{bchv-eq-subsol-3}.\par
We then claim that
\be\label{wleu}
w\le u\ \hbox{ in }\overline{\R^N\setminus K}.
\ee
Since $w=0$ in $\R^N\setminus B_{R+1}(x^*)$ and $u\ge0$ in $\overline{\R^N\setminus K}$, we only need to show that $w\le u$ in $\overline{B_{R+1}(x^*)}\ (\subset\overline{\R^N\setminus K})$. Denote
$$z:=w-u$$
in $\overline{B_{R+1}(x^*)}$ and assume that
$$\max_{\overline{B_{R+1}(x^*)}}z=z(\bar x)>0$$ for some $\bar x\in\overline{B_{R+1}(x^*)}$. Since $\overline{B_{R+1}(x^*)}\subset\overline{\R^N\setminus K}$ and $u$ and $J$ are non-negative with $J$ having a unit mass in $L^1(\R^N)$, it follows from the equation $Lu+f(u)=0$ satisfied by $u$ in $\overline{\R^N\setminus K}$ that
$$\mathcal{L}_{B_{R+1}(x^*)}[u](\bar{x})-u(\bar x)+f(u(\bar x))\le 0.$$
Together with the first inequality of the second line of~\eqref{bchv-eq-subsol-3} applied at $\bar x$, we get that
\be\label{barx}
\mathcal{L}_{B_{R+1}(x^*)}[z](\bar{x})-z(\bar x)+f(w(\bar x))-f(u(\bar x))\ge 0.
\ee
Since $z\le z(\bar x)$ in $\overline{B_{R+1}(x^*)}$, one has $\mathcal{L}_{B_{R+1}(x^*)}[z](\bar{x})-z(\bar x)\le0$. Furthermore, remembering~\eqref{u1eps} and the choice of $\epsilon$, there holds $1-\epsilon\le u(\bar x)=w(\bar x)-z(\bar x)<w(\bar x)<1$ and $f'<0$ in $[1-\epsilon,1]$, hence $f(w(\bar x))-f(u(\bar x))<0$. This contradicts~\eqref{barx}. Therefore, $\max_{\overline{B_{R+1}(x^*)}}z\le0$, that is, $w\le u$ in $\overline{B_{R+1}(x^*)}$ and then in $\overline{\R^N\setminus K}$.
\vskip 0.3cm

\noindent \emph{Step 3: a lower bound in annuli with large inner radii}
\vskip 0.3cm
\noindent{}Let us now construct some families of sub-solutions and exploit
the sweeping principle (Lemma~\ref{TH:SWEEP}) to get a lower bound of $u$ in some annuli. To do so, let $x^*\in\R^N$, $0<R_J\le R_{K}\le R$ and $w\in C(\R^N,[0,1))$ be as in~\eqref{bchv-eq-subsol-3}. Consider any orthonormal basis $(e_1,\cdots,e_N)$ of $\R^N$ and, for $\tau\in[0,2\pi]$, let $\mathcal{R}_{\tau}$ be the rotation of angle $\tau$ in the plane spanned by $(e_1,e_2)$ (that is, $\mathcal{R}_\tau e_1=(\cos\tau)e_1+(\sin\tau)e_2$ and $\mathcal{R}_\tau e_2=-(\sin\tau)e_1+(\cos\tau)e_2$) and leaving invariant the vectors $e_3,\cdots,e_N$. We set
$$A:=\mathcal{A}(|x^*|-R-1,|x^*|+R+1)=B_{|x^*|+R+1}\setminus\overline{B_{|x^*|-R-1}}.$$
{F}rom~\eqref{bchv-eq-subsol-3}, note that $\overline{A}\,\subset\,\R^N\!\setminus\!B_{R_{K}}\,\subset\,\R^N\!\setminus\!K$ (hence, $\overline{A}\cap K=\emptyset$). Now for each $\tau\in [0,2\pi]$ and $x\in\R^N$, we set
$$w_{\tau}(x):= w(\mathcal{R}_\tau x).$$
Thanks to the rotational invariance of $J$ and $A$, and since $B_{R+1}(x^*)\subset A$ and both $J$ and $w$ are non-negative, it follows from~\eqref{bchv-eq-subsol-3} that each function $w_\tau$ satisfies
$$\oplb{w_\tau}{A}-w_\tau +f(w_\tau)\ge0 \quad{\mbox{in }}\R^N.$$
On the other hand, it follows from~\eqref{LIM:cu9} that the function $u$ obeys
$$\oplb{u}{A}(x)\!-\!u(x)\!+\!f(u(x))=-\int_{\R^N\setminus(K\cup A)}\!\!\!J(x\!-\!y)\,u(y)\,\mathrm{d}y - u(x)\left(1\!-\!\int_{\R^N\setminus K}\!\!J(x\!-\!y)\,\mathrm{d}y\right)\le 0$$
for all $x\in\overline{\R^N\setminus K}$ and therefore for all $x\in\overline{A}$. In addition, thanks to positivity of $u$ in $\overline{\R^N\setminus K}$ (remember~\eqref{u>0}) and the fact that $J>0$ a.e. in $\mathcal{A}(r_1,r_2)$ with $0\le r_1<r_2\le R_J\le R_K\le R$ (remember~\eqref{C2} and $\mathrm{supp}(J)\subset B_{R_J}$), one infers that
$$ \int_{\R^N\setminus(K\cup A)}\!\!J(x\!-\!y)\,u(y)\,\mathrm{d}y>0\ \hbox{ for all }x\in A':=\mathcal{A}(|x^*|\!+\!R\!+\!1\!-\!r_2,|x^*|\!+\!R\!+\!1)\ (\subset A),$$
hence
$$\oplb{u}{A}(x)-u(x) +f(u(x))<0 \ \text{ for all }x \in A'.$$
Since $w\le u$ in $\overline{A}\,(\subset\overline{\R^N\setminus K})$ by~\eqref{wleu} and $r_2\le R_K\le|x^*|-R-1$ by~\eqref{defx*}, it follows
from the sweeping principle (Lemma~\ref{TH:SWEEP}) applied to $u$, to the family $(w_\tau)_{\tau\in[0,2\pi]}$ and to
$$(s_1,s_2,s_3,s_4)=(|x^*|-R-1,|x^*|+R+1-r_2,|x^*|+R+1,|x^*|+R+1),$$
that
\be\label{ineqwtau}
w_\tau\le u\ \hbox{ in }\overline{A}\ \hbox{ for every }\tau\in[0,2\pi].
\ee
Notice also (even if the following inequalities will not explicitly
be used in the next step) that, since $w\ge1-\epsilon$ in $\overline{B_1(x^*)}$
by~\eqref{bchv-eq-subsol-3}, the family
of estimates in~\eqref{ineqwtau} implies in particular that $u\ge1-\eps$ in $\bigcup_{\tau \in [0,2\pi]}\overline{B_1(\mathcal{R}_\tau^{-1}x^*)}$. Since
the previous arguments are
independent of the choice of the orthonormal basis $(e_1,\ldots,e_N)$, we also get that $u\ge1-\eps$ in $\overline{\a(|x^*|-1,|x^*|+1)}$.
\vskip 0.3cm

\noindent \emph{Step 4: conclusion}
\vskip 0.3cm
\noindent{}Let us now finish our argument. To complete the proof of Lemma~\ref{lem:lim}, we will again construct an adequate family of sub-solutions and use
the sweeping principle to push further the estimates obtained in the previous step. To do so, pick some $\rho>0$ and consider the domain
$$A_{\rho}:=\mathcal{A}(|x^*|-R-1,|x^*|+R+1+\rho),$$
where $R>0$ is defined in Steps~2 and~3. {F}rom~\eqref{bchv-eq-subsol-3}, we
note that $\overline{A_\rho}\,\subset\,\R^N\!\setminus\!B_{R_{K}}\,\subset\,\R^N\!\setminus\!K$ (hence, $\overline{A_\rho}\cap K=\emptyset$). Next, consider any rotation $\mathcal{R}$ of $\R^N$, let $e:=x^*/|x^*|\in \partial B_1$ and, for each $\sigma\in [0,\rho]$ and $x\in\R^N$, denote
$$W_{\sigma}(x):= w(\mathcal{R}\,x-\sigma\,e).$$
As in the previous step, from the rotational invariance of $J$ and $A_\rho$, and since $B_{R+1}(x^*+\sigma e)\subset A_\rho$ for every $\sigma\in[0,\rho]$ and both $J$ and $w$ are non-negative, it follows from~\eqref{bchv-eq-subsol-3} that each function $W_\sigma$ satisfies
$$\oplb{W_\sigma}{A_\rho}-W_\sigma +f(W_\sigma)\ge0\ \hbox{ in }\R^N.$$
Similarly, it follows from~\eqref{LIM:cu9} that the function $u$ obeys
$$\oplb{u}{A_\rho}-u+f(u)\le0\ \hbox{ in }\overline{\R^N\setminus K}$$
(and therefore in $\overline{A_\rho}$), while
$$\oplb{u}{A_\rho}-u+f(u)<0 \ \text{ in }\mathcal{A}(|x^*|+R+1+\rho-r_2,|x^*|+R+1+\rho)\ (\subset A_\rho).$$
{F}rom the inequality~\eqref{ineqwtau} of the previous step (which holds for every $\tau\in[0,2\pi]$ and for every orthonormal basis $(e_1,\cdots,e_N)$), we have $W_0\le u$ in $\overline{A}$ and then in $\overline{\R^N\setminus K}$ (since $W_0=0$ in $\R^N\setminus A$ and $u>0$ in $\overline{\R^N\setminus K}$. As a consequence, $W_0\le u$ in $\overline{A_\rho}$. Finally, it follows
from the sweeping principle (Lemma~\ref{TH:SWEEP}) applied to $u$, to the family $(W_\sigma)_{\sigma\in[0,\rho]}$ and to
$$(s_1,s_2,s_3,s_4)=(|x^*|-R-1,|x^*|+R+1+\rho-r_2,|x^*|+R+1+\rho,|x^*|+R+1+\rho),$$
that $W_\sigma\le u$ in $\overline{A_\rho}$ for every $\sigma\in[0,\rho]$. Since $w\ge1-\epsilon$ in $\overline{B_1(x^*)}$ by~\eqref{bchv-eq-subsol-3}, we obtain in particular that
$$u\ge1-\eps\ \ \hbox{ in }\bigcup_{\sigma \in [0,\rho]}\overline{B_1(\mathcal{R}^{-1}(x^*+\sigma e))}.$$\par
The previous arguments being independent of the choice of $\rho>0$ and the rotation $\mathcal{R}$ of $\R^N$, we conclude that
$$u(x)\ge1-\eps\ \ \hbox{ for all }|x|\ge |x^*|-1.$$
Since $\eps>0$ can be arbitrarily small, the proof of Lemma~\ref{lem:lim} is thereby complete.


\SE{The case of small perturbations of convex obstacles}\label{9}

In this section, we explore further the validity of the Liouville Theorem~\ref{TH:LIOUVILLE} and we prove  Theorem~\ref{TH:PERTURB}, a kind of stability result for the Liouville property. In the spirit of the results of Bouhours~\cite{Bouhours}, we show that the Liouville property obtained in  Theorem~\ref{TH:LIOUVILLE} still holds true for small perturbations of convex obstacles, provided some additional assumptions are made on $f$ and $J$. To do so, we adapt to our problem the arguments developed in~\cite{Bouhours} and, in particular, we will rely on the following

\begin{lemma}\label{PROP:UNIFCV}
Assume all hypotheses of Theorem~$\ref{TH:PERTURB}$. Then, for every $\delta\in(0,1)$, there exists a real number $R_\delta>0$ such that, for any $\eps\in(0,1]$ and any measurable solution $u_\eps:\R^N\setminus K_\eps\to[0,1]$ of~\eqref{eq:ueps}, there holds $u_\eps(x)\geq 1-\delta$ for a.e. $|x|\geq R_\delta$.
\end{lemma}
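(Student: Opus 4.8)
The plan is to argue by contradiction and compactness, passing to a limiting entire solution of the kernel equation and then invoking a rigidity property for it. First, some uniform preliminaries. By Definition~\ref{DEF:KEPS}(i) we have $K\subset K_\eps\subset K_1$ for all $\eps\in(0,1]$, hence $K_\eps\subset B_{\rho_0}$ for a fixed $\rho_0>0$; and the flatness hypothesis of Theorem~\ref{TH:PERTURB} says exactly that
$$\beta_0:=\inf_{0<\eps\le1}\Big(\inf_{x\in\R^N\setminus K_\eps}\|J(x-\cdot)\|_{L^1(\R^N\setminus K_\eps)}-\max_{[0,1]}f'\Big)>0.$$
So Lemma~\ref{LEMMA:INT}, applied with $K_\eps$ in place of $K$ (using $J\in B_{1,\infty}^\alpha(\R^N)$), lets us identify each $u_\eps$ with a representative in $C^{0,\alpha}(\overline{\R^N\setminus K_\eps})$ with $\beta_0\,[u_\eps]_{C^{0,\alpha}(\overline{\R^N\setminus K_\eps})}\le 2\,[J]_{B_{1,\infty}^\alpha(\R^N)}$; in particular $\{u_\eps\}_\eps$ is equicontinuous, uniformly in $\eps$. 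It is enough to prove the statement for $\delta$ in a small interval $(0,\delta_0]$ with $\delta_0<1-\theta$ and $f'<0$ on $[1-\delta_0,1]$ (take $R_\delta=R_{\delta_0}$ for larger $\delta$), and, as in the proof of Theorem~\ref{TH:LIOUVILLE}, I would extend $f$ so that~\eqref{well+2} holds with $c_0=\delta_0$.

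Suppose the conclusion fails for some such $\delta$: there are $\eps_n$, solutions $u_{\eps_n}$ of~\eqref{eq:ueps}, and $x_n$ with $|x_n|\to+\infty$ and $u_{\eps_n}(x_n)<1-\delta$. For each $n$ the set $S_n:=\{x\in\overline{\R^N\setminus K_{\eps_n}}:u_{\eps_n}(x)\le1-\delta\}$ is nonempty and compact (closed, and bounded since $u_{\eps_n}\to1$ at infinity); let $\xi_n\in S_n$ have maximal norm $R_n:=|\xi_n|\ge|x_n|\to+\infty$. Then $u_{\eps_n}\ge1-\delta$ on $\{|x|\ge R_n\}\cap\overline{\R^N\setminus K_{\eps_n}}$, and approaching $\xi_n$ radially from outside $B_{R_n}$ gives $u_{\eps_n}(\xi_n)=1-\delta$. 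Set $v_n:=u_{\eps_n}(\cdot+\xi_n)$; since $K_{\eps_n}\subset B_{\rho_0}$ and $R_n\to+\infty$ the domains of the $v_n$ exhaust $\R^N$, and the $v_n$ are equicontinuous with values in $[0,1]$, so (subsequence, diagonal argument) $v_n\to v_\infty$ locally uniformly with $v_\infty\in C(\R^N,[0,1])$ and $e_n:=\xi_n/R_n\to e\in\partial B_1$. Passing to the limit in the (translated) equation for $v_n$ — using $\int_{K_{\eps_n}}J((x+\xi_n)-y)\,\D y\to0$ and dominated convergence — gives $J*v_\infty-v_\infty+f(v_\infty)=0$ in $\R^N$. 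Moreover $v_\infty(0)=1-\delta$, so $v_\infty>0$ in $\R^N$ (otherwise $v_\infty\equiv0$, arguing as in Lemma~\ref{CLAIM0}), and, since $|x+\xi_n|^2-R_n^2=|x|^2+2R_n(x\cdot e_n)>0$ for fixed $x\cdot e>0$ and $n$ large, $v_\infty\ge1-\delta$ on $\overline H:=\{x\cdot e\ge0\}$. One more translation–compactness step upgrades this: for any $t_k\to+\infty$ and $p_k\perp e$, any local-uniform limit of $v_\infty(\cdot+t_ke+p_k)$ solves the whole-space equation with values in $[1-\delta,1]\subset(\theta,1]$, hence equals $1$ (evaluating the equation at a minimum point — after translating along a minimizing sequence if the infimum is not attained — forces $f$ of the infimum to be $\le0$, impossible in $(\theta,1)$); by the usual subsequence argument, $v_\infty\to1$ uniformly as $x\cdot e\to+\infty$.

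It remains to show that these properties of $v_\infty$ are contradictory, and this is where the real work and the main obstacle lie. The plan here is a sliding argument with the bistable front. Under~\eqref{C4} the function $\phi$ in~\eqref{C3} can be taken to be the traveling front — increasing from $0$ to $1$ — which, because $\int_0^1f>0$, is a strict sub-solution, $J_1*\phi-\phi+f(\phi)>0$ on $\R$; the planar functions $\varphi_r(x):=\phi(x\cdot e-r)$ are then strict sub-solutions of the whole-space equation. Assuming first that $\inf_{\R^N}v_\infty>0$, the weak maximum principle (Lemma~\ref{WEAK} with $K=\emptyset$ and $H=\{x\cdot e>0\}$, super-solution $v_\infty$, sub-solution $\varphi_r$, $c_0=\delta_0$) applies for $r$ large — $\varphi_r\to0$ as $x\cdot e\to-\infty$ handles the limit on $H^c$ and at infinity, and $v_\infty\ge1-\delta$ on $\overline H$ handles the rest — so $\varphi_r\le v_\infty$ in $\R^N$, hence $r_*:=\inf\{r:\varphi_r\le v_\infty\}$ is finite. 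One then slides $r$ downward: if $r_*$ were finite, an arbitrarily small decrease below $r_*$ would be obstructed either at a finite contact point (possibly after a translation $\perp e$), forcing $v_\infty\equiv\varphi_{r_*}$ on an open set via the strong maximum principle (Lemma~\ref{STRONG:LE23}) and contradicting the strictness of $\varphi_{r_*}$, or at infinity in the $+e$ direction, which the uniform convergence $v_\infty\to1$ at $+\infty$ together with the decay of $\phi$ near $1$ rule out; thus $\varphi_r\le v_\infty$ for all $r$, and letting $r\to-\infty$ gives $v_\infty\equiv1$, contradicting $v_\infty(0)=1-\delta<1$. The main obstacle is precisely this rigidity step: besides the ``contact at infinity'' alternative — which needs the decay estimates near the stable state $1$ afforded by $f'(1)<0$ and the moment bound $\int_\R J_1(x)|x|\,\D x<+\infty$ in~\eqref{C4} — one must separately dispose of the case $\inf_{\R^N}v_\infty=0$, i.e.\ $v_\infty$ being a full transition from $0$ to $1$, which cannot occur as a stationary solution because $\int_0^1f>0$. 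These are the only places where the hypotheses of Theorem~\ref{TH:PERTURB} are used beyond equicontinuity and the comparison principles of Section~\ref{4}.
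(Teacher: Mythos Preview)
Your strategy---contradiction, compactness, and a Liouville-type rigidity for the limiting entire solution $v_\infty$---is genuinely different from the paper's, and you are honest about the two places where it is incomplete. But those places are real gaps, not just details.

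First, the ``contact at infinity'' alternative in your sliding cannot be waved away. With the original front $\phi$ (satisfying $\phi(+\infty)=1$), both $\varphi_{r_*}$ and $v_\infty$ tend to $1$ as $x\cdot e\to+\infty$, so $\inf_{\R^N}(v_\infty-\varphi_{r_*})$ can perfectly well be $0$ without any finite contact point, and your translation--compactness argument in that direction only produces the limit $1\equiv1$, which yields no contradiction. Turning this into a genuine obstruction requires comparing decay rates near $1$, which is an additional analysis you have not carried out. Second, the case $\inf_{\R^N}v_\infty=0$ also needs a proof: the assertion that a stationary entire solution with $v_\infty\to1$ as $x\cdot e\to+\infty$ cannot have infimum $0$ ``because $\int_0^1f>0$'' is a nonlocal Liouville statement in its own right.

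The paper bypasses both issues by a direct argument with a \emph{modified} front. One introduces $f_\delta:=f-f(1-\delta/2)$ (still bistable for small $\delta$, between $s_\delta:=f(1-\delta/2)/f'(0)<0$ and $1-\delta/2$) and its increasing sub-solution profile $\phi_\delta$ with $\phi_\delta(-\infty)=s_\delta<0$, $\phi_\delta(0)=0$, $\phi_\delta(+\infty)=1-\delta/2$. The radial function $\Phi_{\delta,A}(x):=\phi_\delta(|x|-A)$ then satisfies $\Phi_{\delta,R_0}\le0\le u_\eps$ on $\overline{B_{R_0}\setminus K_\eps}$ for every $\eps$ (since all $K_\eps\subset B_{R_0}$). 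A single sliding argument on $A$, for each fixed $u_\eps$, shows $A^*\le R_0$: if not, the contact point lies outside $B_{R_0}$, and the planar comparison $\phi_\delta(x\cdot e_0-A^*)\le\Phi_{\delta,A^*}\le u_\eps$ together with the strong maximum principle forces $u_\eps\equiv\phi_\delta(\cdot\,e_0-A^*)<1-\delta/2$ on a half-space, contradicting $u_\eps\to1$. Thus $\Phi_{\delta,R_0}\le u_\eps$ in $\overline{\R^N\setminus K_\eps}$ uniformly in $\eps$, and one reads off $R_\delta$ from $\phi_\delta(+\infty)=1-\delta/2>1-\delta$.

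The two devices that make the paper's proof go through cleanly are exactly the ones that would also repair yours: (i) replacing $\phi$ by $\phi_\delta$ with $\phi_\delta(+\infty)=1-\delta/2<1$ kills the contact-at-infinity alternative (the gap $v_\infty-\varphi_{r_*}$ stays $\ge\delta/2$ at $+\infty$ in $e$); and (ii) $\phi_\delta(-\infty)<0$ removes any need to know $\inf v_\infty>0$. If you use $\phi_\delta$, though, the compactness detour becomes unnecessary: you can slide directly against each $u_\eps$, which is what the paper does.
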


Before proving Lemma~\ref{PROP:UNIFCV}, let us first establish a preliminary ``rough" Liouville-type result, namely Proposition~\ref{LargeGamma}.

\begin{proof}[Proof of Proposition~$\ref{LargeGamma}$]
We recall that $f\in C^1([0,1])$, that $J$ is assumed to satisfy~\eqref{C2}, that $K$ is a compact set such that $\R^N\setminus K$ is connected, and that $u:\overline{\R^N\setminus K}\to[\theta,1]$ is a continuous solution of~\eqref{LIM:u0} such that $f\ge0$ on $[\theta,1]$. Let us set
$$m=\inf_{\overline{\R\setminus K}}u\ \in[\theta,1].$$
Suppose, by contradiction, that $m<1$. Let $(x_n)_{n\in\N}\subset\overline{\R^N\setminus K}$ be a sequence such that $u(x_n)\to m$ as $n \to +\infty$. Since  $u(x)\to 1$ as $|x|\to +\infty$, the sequence $(x_n)_{n\in \N}$ is bounded and, up to extraction of a subsequence, we may assume that it converges to some $\bar x \in \overline{\R^N\setminus K}$. Evaluating the equation satisfied by $u$ at $x_n$, we obtain
$$ \int_{\R^N\setminus K}J(x_n-y)\big(u(y)-u(x_n)\big)\,\D y +f\big(u(x_n)\big)=0.$$
By assumption, $f(u(x))\ge 0$ for all $x\in\overline{\R^N\setminus K}$ and therefore
$$ \int_{\R^N\setminus K}J(x_n-y)\big(u(y)-u(x_n)\big)\,\D y \le 0.$$
Since $J \in L^1(\R^N)$ passing to the limit in the above inequality results in
$$0\le \int_{\R^N\setminus K}J(\bar x-y)\big(u(y)-m\big)\,\D y \le 0.$$
Thus, arguing as in Section~\ref{4} and using~\eqref{C2} and the connectedness of $\R^N\setminus K$, we obtain that $u=m\ (<1)$ in $\overline{\R^N\setminus K}$. Since $u(x)\to 1$ as $|x|\to +\infty$, we get a contradiction. The proof of Proposition~\ref{LargeGamma} is thereby complete.
\end{proof}

Let us now turn our attention to the proof of Lemma~\ref{PROP:UNIFCV}.

\begin{proof}[Proof of Lemma~$\ref{PROP:UNIFCV}$]
First of all, in virtue of Lemma~\ref{LEMMA:INT}, we know that, for every $\eps\in(0,1]$, every measurable solution $u_\eps:\R^N\setminus K_\eps\to[0,1]$ of~\eqref{eq:ueps} possesses a H\"older continuous representative $u_\eps^*\in C^{0,\alpha}(\overline{\R^N\setminus K_\eps})$. Consequently, we are allowed to identify $u_\eps$ with $u_\eps^*$. For simplicity, we omit the superscript $*$ and write simply $u_\eps$ instead of $u_\eps^*$.\par
Let us then continuously extend $f$ by $f'(0)s$ for $s\le 0$ and by $f'(1)(s-1)$ for $s\ge 1$  and  still denote $f$ this extension. 
We also observe that, since $(K_\eps)_{0<\eps\le1}$ is a family of (at least) $C^{0,\alpha}$ deformations of $K$ in the sense of Definition~\ref{DEF:KEPS},
there exists a real number $R_0>0$ such that
\be\label{defR0}
K_\eps\subset B_{R_0} \quad{\mbox{ for all }}0<\eps\leq1.
\ee\par
Notice now that it is sufficient to show the conclusion of Lemma~\ref{PROP:UNIFCV} for $\delta>0$ small enough. For any $\delta>0$ small enough, we are going to consider an auxiliary problem whose solutions will provide an appropriate lower bound for $u_\eps$, allowing us to prove the desired uniform convergence as $|x|\to+\infty$. To this end, for $\delta\in(0,1)$, denote
$$f_\delta(s):=f(s)-f(1-\delta/2)\hbox{ for }s\in\R,\ \hbox{ and }\ s_\delta:=\frac{f(1-\delta/2)}{f'(0)}.$$
It is immediate to check that there exists $\delta_1\in(0,1)$ such that, for every $\delta\in(0,\delta_1)$, one has $s_\delta<0<1-\delta/2<1$ and
$$\left\{
\begin{array}{l}
f_\delta\leq f\text{ in }\R,\ \ f_\delta'=f'<1/2\text{ in }\R,\vspace{3pt}\\
f_\delta(s_\delta)=0,\ \ f_\delta'(s_\delta)<0,\ \ f_\delta(1-\delta/2)=0,\ \ f_\delta'(1-\delta/2)<0,\ \ \displaystyle\int_{s_\delta}^{1-\delta/2}f_\delta(r)\,\mathrm{d}r>0,\vspace{3pt}\\
f_\delta\text{ vanishes only once in }(s_\delta,1-\delta/2).\end{array}
\right.$$
Using the results obtained in~\cite{Bates,Chen,Coville,Yagisita}, we know that, for every $\delta\in(0,\delta_1)$, there exists a continuous function $\phi_\delta:\R\to(s_\delta,1-\delta/2)$ satisfying
$$\left\{
\begin{array}{l}
L_{\R}\phi_\delta+f_\delta(\phi_\delta)=J_1*\phi_\delta-\phi_\delta+f_\delta(\phi_\delta)\geq0\ \text{ in }\R,\vspace{3pt}\\
\phi_\delta\hbox{ is increasing in }\R,\vspace{3pt}\\
\phi_\delta(-\infty)=s_\delta,\ \ \phi_\delta(0)=0,\ \ \phi_\delta(+\infty)=1-\delta/2.
\end{array}
\right.$$\par
Fix in the sequel any $\delta\in(0,\delta_1)$, any $\epsilon \in (0,1]$ and any (H\"older-continuous) function $u_\epsilon:\overline{\R^N\setminus K_\eps}\to[0,1]$ solving~\eqref{eq:ueps}. For $A>0$, we let $\Phi_{\delta,A}$ be the function defined in $\R^N$ by
$$ \Phi_{\delta,A}(x):=\phi_\delta(|x|-A). $$
We observe that, by construction, we have
\be\label{Phidelta}
\Phi_{\delta,R_0}(x)\le0\le u_{\epsilon} \quad \text{ for all } x \in\overline{B_{R_0}\setminus K_\eps}.
\ee
Our aim is to extend the above relation to all $x \in \R^N\setminus\overline{B_{R_0}}$. Since $u_\epsilon(x) \to 1$ as  $|x| \to +\infty$, there exists $R_\epsilon>R_0$ such that
$u_{\epsilon}(x)\ge\max(1-c_0,1-\delta/2)$ for all $|x|\ge R_\epsilon$ where  $c_0>0$ is such that $f'<0$ in $[1-c_0,+\infty)$. Then, reasoning as in Lemma~\ref{sub-solution} (or using directly that $\Phi_{\delta,A}\to s_\delta<0$ as $A\to+\infty$ locally uniformly in $\R^N$ and $\Phi_{\delta,A}<1-\delta/2<1$ in $\R^N$), we obtain that, for some $A_\eps>0$,
$$ \Phi_{\delta,A_{\epsilon}}\le u_\epsilon\ \text{ in }\overline{\R^N\setminus K_\eps}.$$
Consequently, it makes sense to define
$$ A^*:=\inf\big\{A\in\R\,;\ \Phi_{\delta,A}\le u_\eps\hbox{ in }\overline{\R^N\setminus K_\epsilon}\big\}\ \le A_\eps.$$\par
We claim that
\begin{equation}\label{Claim:A0}
A^* \le R_0
\end{equation}
We argue by contradiction and assume that $A^*>R_0$. {F}rom the definition of $A^*$ and the continuity of $\phi_\delta$, we have
\be\label{PhideltaA*}
\Phi_{\delta,A^*}\le u_{\epsilon}\ \hbox{ in }\overline{\R^N\setminus K_\epsilon}.
\ee
If $\min_{\overline{B_{R_\eps}\setminus K_\eps}}(u_\eps-\Phi_{\delta,A^*})>0$, then from the uniform continuity of $\phi_\delta$,
there exists $\tau>0$ small enough such that $\Phi_{\delta,A^*-\tau}\le u_{\epsilon}$ in $\overline{B_{R_\eps}\setminus K_\epsilon}$. On the other hand, $\Phi_{\delta,A^*-\tau}<1-\delta/2\le u_\eps$ in $\R^N\setminus B_{R_\eps}$. Hence, $\Phi_{\delta,A^*-\tau}\le u_{\epsilon}$ in $\overline{\R^N\setminus K_\epsilon}$, a contradiction with the definition of $A^*$. Therefore, $\min_{\overline{B_{R_\eps}\setminus K_\eps}}(u_\eps-\Phi_{\delta,A^*})=0$. Since $u_\epsilon$ and $\Phi_{\delta,A^*}$ are continuous, there exists $x_0 \in \overline{B_{R_\eps}\setminus K_\eps}$ such that
$$\Phi_{\delta,A^*}( x_0)= u_{\epsilon}(x_0).$$
Since $A^*>R_0$ by assumption, it follows from~\eqref{Phidelta} and the strict monotonicity of $\Phi_{\delta,A}$ with respect to $A$ that  $x_0 \in \overline{B_{R_\eps}\setminus B_{R_0}}$. Let us set $e_0=x_0/|x_0|$ and define the open affine half-space
$$H:=\big\{x\in\R^N\,;\ x\cdot e_0 >R_0\big\}\ (\subset\R^N\setminus K_\eps).$$
{F}rom~\eqref{PhideltaA*} and the definition of $\Phi_{\delta,A^*}$, we have
$$u_\eps(x)\geq \varphi(x):=\phi_{\delta}(x\cdot e_0 -A^*)\quad{\mbox{for all }}x\in\overline{\R^N\setminus K_\eps}.$$
Reasoning as in Lemma~\ref{sub-solution} and recalling the assumptions on $f_\delta$, we have that
$$\left\{
\begin{array}{r l}
L_\eps u_\eps+f(u_\eps)=0 & \text{in }\overline{H},\vspace{3pt}\\
L_\eps \varphi+f(\varphi)\geq0 & \text{in }\overline{H}\ \ \hbox{(as in }\eqref{eq15}\hbox{)},\vspace{3pt}\\
u_\eps\geq \varphi & \text{in }\overline{\R^N\setminus K_\eps},\vspace{3pt}\\
u_\eps(x_0)=\varphi(x_0) & \text{with }x_0\in\overline{H}.
\end{array}
\right.$$
Applying the strong maximum principle (Lemma~\ref{STRONG:LE23}) we obtain in particular that $u_\eps=\varphi$ in $\overline{H}$. This is impossible since $u_\eps(x)\to1$ as $|x|\to+\infty$, while $\varphi<1-\delta/2<1$ in $\R^N$. As a consequence, the claim~\eqref{Claim:A0} holds true.\par
{F}rom~\eqref{Claim:A0} and the monotonicity of $\Phi_{\delta,A}$ with respect to $A$, we then deduce that
$$\Phi_{\delta,R_0}\le\Phi_{\delta,A^*}\le u_\eps\ \hbox{ in }\overline{\R^N\setminus K_\eps}.$$
Since $\epsilon\in(0,1]$ and $u_\eps:\overline{\R^N\setminus K_\eps}\to[0,1]$ solving~\eqref{eq:ueps} were arbitrary, since $R_0>0$ verifying~\eqref{defR0} was independent of $\eps$, and since $\phi_\delta(+\infty)=1-\delta/2>1-\delta$, the desired conclusion follows.
\end{proof}

We are now ready to prove Theorem~\ref{TH:PERTURB}.

\begin{proof}[Proof of Theorem~$\ref{TH:PERTURB}$]
First of all, as in the proof of Lemma~\ref{PROP:UNIFCV}, it follows from Lemma~\ref{LEMMA:INT} that, for every $\eps\in(0,1]$, every measurable solution $u_\eps:\R^N\setminus K_\eps\to[0,1]$ of~\eqref{eq:ueps} can be identified with its H\"older continuous $C^{0,\alpha}(\overline{\R^N\setminus K_\eps})$ representative. Furthermore, Lemma~\ref{LEMMA:INT} yields
$$[u_\eps]_{C^{0,\alpha}(\overline{\R^N\setminus K_\eps})}\leq A:=\frac{2[J]_{B_{1,\infty}^\alpha(\R^N)}}{\displaystyle\inf_{0<\eta\leq 1}\inf_{x\in\R^N\setminus K_\eta}\|J(x-\cdot)\|_{L^1(\R^N\setminus K_\eta)}-\max_{[0,1]}f'}.$$
Note that $A$ is independent of $\eps$. In particular, for every $\eps_*\in(0,1]$ and every $R\ge R_0$, where $R_0>0$ is chosen as in~\eqref{defR0}, the family $(u_\eps)_{0<\eps\leq\eps_*}$ is uniformly bounded in $C^{0,\alpha}(\overline{B_R\setminus K_{\eps_*}})$. Recalling that $K_\eps\to K$ as $\eps\to0^+$ in the $C^{0,\alpha}$ sense,
there exists a sequence $(\eps_j)_{j\in\N}\in(0,1]$ converging to $0^+$ and a function $u_0\in C^{0,\alpha}(\overline{\R^N\setminus K})$ such that, for all $R\ge R_0$ and $\beta\in(0,\alpha)$,
\be\label{uepsk}
\|u_{\eps_j}-u_0\|_{C^{0,\beta}(\overline{B_R\setminus K_{\eps_j}})}\to0 \quad{\mbox{as }}j\to+\infty.
\ee
Notice that $0\le u_0\le 1$ in $\overline{\R^N\setminus K}$. By Lemma~\ref{PROP:UNIFCV} we know that $u_\eps(x)\to1$ uniformly in $\eps>0$ as $|x|\to+\infty$. Consequently,
\begin{align}
u_0(x)\to1 \quad{\mbox{ as }}|x|\to+\infty. \label{u0infty}
\end{align}\par
Now, we claim that
\begin{align}
Lu_0(x)+f(u_0(x))=0 \quad{\mbox{ in }}\overline{\R^N\setminus K}, \label{EQ:u0}
\end{align}
where $L$ is given by~\eqref{DEF:L}. This can be seen as follows. First, fix $x$ in the open set $\R^N\setminus K$ and an integer $j_0$ large enough such that $x\in\R^N\setminus K_{\eps_j}$ for all $j\ge j_0$. Notice that $f(u_{\eps_j}(x))\to f(u_0(x))$ as $j\to+\infty$ since $f$ is continuous. Next, for all $j\geq j_0$ we have
$$\begin{array}{rcl}
L_{\eps_j} u_{\eps_j}(x)-Lu_0(x) & = & \displaystyle\int_{\R^N\setminus K_{\eps_j}}J(x-y)\big[(u_{\eps_j}-u_0)(y)-(u_{\eps_j}-u_0)(x)\big]\mathrm{d}y\vspace{3pt}\\
& & \displaystyle-\int_{K_{\eps_j}\setminus K}J(x-y)\big(u_0(y)-u_0(x)\big)\,\mathrm{d}y.\end{array}$$
For every $R\geq R_0$ and $j\ge j_0$, there holds
$$\begin{array}{rcl}
|L_{\eps_j} u_{\eps_j}(x)-Lu_0(x)| & \leq & \displaystyle2\int_{K_{\eps_j}\setminus K}J(x-y)\,\mathrm{d}y+2\int_{\R^N\setminus B_R} J(x-y)\,\mathrm{d}y\vspace{3pt}\\
& & \displaystyle+\|u_{\eps_j}-u_0\|_{L^{\infty}(B_R\setminus K_{\eps_j})}+|u_{\eps_j}(x)-u_0(x)|.\end{array}$$
Since $K_{\eps_j}\to K$ in the $C^{0,\alpha}$ sense and $J\in L^1(\R^N)$, we have in particular that the first term in the right-hand side converges to $0$ as $j\to+\infty$. Recalling~\eqref{uepsk} and letting first $j\to+\infty$ and then $R\to+\infty$, we find that
$$L_{\eps_j} u_{\eps_j}(x)-Lu_0(x)\to 0\ \hbox{ as }j\to+\infty.$$
Therefore,~\eqref{EQ:u0} holds for all $x\in\R^N\setminus K$ and finally for all $x\in\overline{\R^N\setminus K}$ by continuity and boundedness of $u_0$ in $\overline{\R^N\setminus K}$.\par
Remember now that $u_0\in C(\overline{\R^N\setminus K},[0,1])$. By~\eqref{u0infty},~\eqref{EQ:u0} and Theorem~\ref{TH:LIOUVILLE}, we infer that $u_0=1$ in $\overline{\R^N\setminus K}$. This also shows that the limit of the functions $u_{\epsilon_j}$ is unique and, hence, $u_\eps\to1$ as $\eps\to0^+$ in the sense of~\eqref{uepsk}, not only along a subsequence.\par
We conclude by contradiction. Suppose then that there exists countably infinitely many numbers in $(0,1]$, which we label in decreasing order as $(\eps_j)_{j\in\N}$, such that $ \eps_j\to 0^+$ as $j\to+\infty$ and
\begin{align}
\forall\,j\in\N,\ \ \exists\,x_j\in\overline{\R^N\setminus K_{\eps_j}},\ \ u_{\eps_j}(x_j)=\min_{\overline{\R^N\setminus K_{\eps_j}}}\,u_{\eps_j}<1.  \label{H:ABS:2}
\end{align}
Note that this makes sense since, without loss of generality, we have identified the functions $u_{\eps_j}$ with their continuous representatives in $\overline{\R^N\setminus K_{\eps_j}}$. 
We observe that~\eqref{C4},~\eqref{H:ABS:2} and Proposition~\ref{LargeGamma} yield that
$$ u_{\eps_j}(x_j)<\theta \quad{\mbox{for all }}j\in\N. $$
Now, since the functions $u_{\eps_j}$ converge uniformly to $1$ as $|x|\to+\infty$ (by Lemma~\ref{PROP:UNIFCV}), the sequence $(x_j)_{j\in\N}$ is bounded. Hence, up to extraction of a subsequence, we may assume that $x_j\to\bar{x}$ as $j\to+\infty$, for some $\bar{x}\in\overline{\R^N\setminus K}$. Furthermore, since the functions $u_{\eps_j}$ converge to $u_0\equiv1$ as $j\to+\infty$ in the sense of~\eqref{uepsk}, we obtain that
$$ 1>\theta>u_{\eps_j}(x_j)\mathop{\longrightarrow}_{j\to+\infty}u_0(\bar{x})=1. $$
This is a contradiction. Therefore, there exists an $\eps_0\in(0,1]$ such that $u_\eps=1$ in $\overline{\R^N\setminus K_\eps}$ for every $\eps\in(0,\eps_0)$ and for every measurable solution $u_\eps:\R^N\setminus K_\eps\to[0,1]$ of~\eqref{eq:ueps} (after identification with its continuous representative). The proof of Theorem~\ref{TH:PERTURB} is thereby complete.
\end{proof}


\vspace{2mm}

\end{document}